\newcommand{\E}{\mathbb E}
\newcommand{\R}{\mathbb{R}}
\newcommand{\N}{\mathbb{N}}
\newcommand{\T}{\mathbb{T}}
\newcommand{\KKK}{\mathbb{K}}
\newcommand{\LLL}{\mathbb{L}}
\newcommand{\FFF}{\mathbb{F}}
\newcommand{\TTT}{\mathbb{T}}
\newcommand{\Var}{\mathop{\mathrm{Var}}\nolimits}
\newcommand{\Vol}{\mathop{\mathrm{Vol}}\nolimits}
\newcommand{\spec}{\mathop{\mathrm{Spec}}\nolimits}
\newcommand{\Cov}{\mathop{\mathrm{Cov}}\nolimits}
\newcommand{\sgn}{\mathop{\mathrm{sgn}}\nolimits}
\newcommand{\conv}{\mathop{\mathrm{Conv}}\nolimits}
\newcommand{\range}{\mathop{\mathrm{Range}}}
\newcommand{\wid}{\mathop{\mathrm{Width}}\nolimits}
\newcommand{\diam}{\mathop{\mathrm{diam}}\nolimits}
\newcommand{\zon}{\mathop{\mathrm{Zon}}\nolimits}
\newcommand{\Bb}{\mathcal{B}}
\newcommand{\EE}{\mathcal{E}}
\newcommand{\eqdistr}{\stackrel{d}{=}}
\newcommand{\eqfdd}{\stackrel{f.d.d.}{=}}
\newcommand{\ind}{\mathbbm{1}}
\newcommand{\dd}{{\rm d}}
\newcommand{\eee}{{\rm e}}
\theoremstyle{plain}
\newtheorem{theorem}{Theorem}[section]
\newtheorem{lemma}[theorem]{Lemma}
\newtheorem{proposition}[theorem]{Proposition}
\theoremstyle{definition}
\newtheorem{example}[theorem]{Example}
\theoremstyle{remark}
\newtheorem{remark}[theorem]{Remark}
\newenvironment{case}[1][\textsc{Case}]
{\begin{trivlist}\item[\hskip \labelsep {\textsc{#1}}]} {\end{trivlist}}
\begin{document}

\author{Zakhar Kabluchko}
\address{Zakhar Kabluchko, Institute of Stochastics,
Ulm University,
Helmholtzstr.\ 18,
89069 Ulm, Germany}
\email{zakhar.kabluchko@uni-ulm.de}

\author{Dmitry Zaporozhets}
\address{Dmitry Zaporozhets\\
St.\ Petersburg Department of
Steklov Institute of Mathematics,
Fontanka~27,
 191011 St.\ Petersburg,
Russia}
\email{zap1979@gmail.com}

\title[Intrinsic volumes of Sobolev balls]{Intrinsic volumes of Sobolev balls with applications to Brownian convex hulls}
\keywords{Intrinsic volumes; Gaussian processes; mean width; Sobolev balls; ellipsoids; Lipschitz balls; Brownian convex hulls; Brownian zonoids; Sudakov's formula; Tsirelson's theorem}
\subjclass[2010]{Primary, 60D05; secondary, 60G15, 52A22}
\thanks{D.~Zaporozhets was supported by RFBR, grant 13-01-00256, and by CRC 701 ``Spectral Structures and Topological Methods in
Mathematics'', Bielefeld}
\begin{abstract}
A formula due to Sudakov relates the first intrinsic volume of a convex set in a Hilbert space to the maximum of the isonormal Gaussian process over this set. Using this formula we compute the first intrinsic volumes of infinite-dimensional convex compact sets including unit balls with respect to Sobolev-type seminorms and ellipsoids in the Hilbert space.  We relate the distribution of the random one-dimensional projections of these sets to the distributions $S_1,S_2,C_1,C_2$ studied by Biane, Pitman, Yor [\textit{Bull.\ AMS} 38 (2001)]. We show that the $k$-th intrinsic volume of the set of all functions on $[0,1]$ which have Lipschitz constant bounded by $1$ and which vanish at $0$ (respectively, which have vanishing integral) is given by
$$
V_k = \frac{\pi^{k/2}}{\Gamma\left(\frac 32 k +1 \right)},
\text{ respectively }
V_k = \frac{\pi^{(k+1)/2}}{2\Gamma\left(\frac 32 k +\frac 32\right)}.
$$
This is related to the results of Gao and Vitale [\textit{Discrete Comput.\ Geom.} 26 (2001), \textit{Elect.\ Comm.\ Probab.} 8 (2003)]  who considered a similar question for functions with a restriction on the total variation instead of the Lipschitz constant. Using the results of Gao and Vitale we give a new proof of the formula for the expected volume of the convex hull of the $d$-dimensional Brownian motion which is due to Eldan [\textit{Elect. J. Probab.}, to appear]. Additionally, we prove an analogue of Eldan's result for the Brownian bridge.  Similarly, we show that the results on the intrinsic volumes of the Lipschitz balls can be translated into formulae for the expected volumes of  zonoids (Aumann integrals) generated by the Brownian motion and the Brownian bridge. Also, these results have discrete versions for Gaussian random walks and bridges.  Our proofs exploit Sudakov's and Tsirelson's theorems which establish a connection between the intrinsic volumes and the isonormal Gaussian process.
% ellipsoids in the Hilbert space and balls defined in terms of the Sobolev norm.
\end{abstract}

\maketitle

\section{Introduction and statement of main results}
\subsection{Intrinsic volumes}
For a bounded convex set $T\subset\R^n$  the \textit{intrinsic volumes} $V_0(T), \ldots, V_n(T)$ are defined as the coefficients in the Steiner formula
\begin{equation}\label{eq:steiner}
\Vol_n(T+rB_n)=\sum_{k=0}^n \kappa_{n-k} V_k(T) r^{n-k}, \quad r\geq 0,
\end{equation}
where $B_n$ denotes the $n$-dimensional unit ball, $\Vol_n$ denotes the $n$-dimensional volume,  and $\kappa_k=\pi^{k/2}/\Gamma(\frac k 2 +1)$ is the volume of $B_k$.  %relates the intrinsic volumes to the expected volumes of random projections of $T$.
Denote by $W_m$ the $m$-dimensional volume of a projection of $T$ onto a uniformly chosen random $m$-dimensional linear subspace in $\R^n$. Then, Kubota's formula states that for every $1\leq m \leq n$,
\begin{equation}\label{eq:kubota}
V_m(T)= \binom nm\frac{\kappa_n}{\kappa_m \kappa_{n-m}}\E W_m.
\end{equation}
In particular,  $V_1(T)$ coincides with the so-called mean width $\E W_1$, up to a constant factor. For an extensive account on integral geometry we refer to the books~\cite{schneider_weil_book} and~\cite{klain_rota_book}

Sudakov~\cite{vS76} (who considered the case $k=1$) and Chevet~\cite{sC76} (who considered arbitrary $k\in\N$) introduced a generalization of the intrinsic volumes to \textit{infinite-dimensional} convex sets; see also~\cite[Ch.~4, \S~9.9]{burago_zalgaller_book}. Let $H$ be a separable Hilbert space. The normalization in~\eqref{eq:steiner} is chosen so that $V_k(T)$ depends only on $T$ and not on the dimension of the surrounding space, so that the definition of  $V_k(T)$ can be extended to any finite-dimensional convex subsets of $H$ (that is, convex subsets which are contained in some finite-dimensional affine subspace of $H$).
Then, for an arbitrary convex set $T\subset H$ one defines
$$
V_k(T)=\sup_{T'} V_k(T')\in [0,+\infty],
$$
where the supremum is taken over all finite-dimensional convex subsets $T'$ of $T$.

Examples of infinite-dimensional sets for which the intrinsic volumes are known explicitly are rare. The aim of this paper is to extend the list of known examples by computing the first intrinsic volume (and, whenever possible, all intrinsic volumes) of ``Sobolev balls''. These are certain infinite-dimensional convex compact subsets of the Hilbert space $L^2=L^2[0,1]$ defined in terms of Sobolev-type seminorms
$$
f \mapsto \left(\int_0^1 |f'(t)|^p\dd t\right)^{1/p}.
$$
Our proofs exploit the relation between the intrinsic volumes and the isonormal Gaussian process.
%related to the Sobolev space $W^{1,p}[0,1]$.
%Although the problem of computing intrinsic volumes is, at a first look,  not , it will be solved by probabilistic methods

%In this work we will be especially interested in the first intrinsic volume $V_1(T)$.  It is related to the mean which of $T$ as follows. Denoting by $W$ the length of a projection of $T$ onto a random line with uniformly distributed direction, we have the formula
%$$
%V_1(T) = \frac{n\kappa_n}{2\kappa_{n-1}} \E W.
%$$
%Sudakov~\cite{vS76} extended the definition of $V_1$ to arbitrary subsets of a separable Hilbert space $H$.  First of all, for arbitrary (not necessarily convex) set $T\subset \R^n$ one can define
%$$
%V_1(T)=V_1(\textrm{conv}(T)).
%$$
%Second,

%Let $(\Omega,\mathcal{F},\P)$ be a complete probability space. Consider a centered Gaussian process  $\xi_t=\{\xi_t\}_{t\in T}$ on $(\Omega,\mathcal{F},\P)$, where $T$ is some arbitrary index set. Denote by $H$ the closed linear span of $\{\xi_t\}_{t\in T}$ in $L_2(\Omega,\mathcal{F},\P)$ and denote by $T$ the closed convex span of $\{\xi_t\}_{t\in T}$ in $H$. Thus $T$ is a closed convex set in Hilbert space $H$. For basic facts about Gaussian Hilberts spaces we refer the reader to \cite{sJ97}.

\subsection{Sobolev balls}\label{sec:sobolev_balls}
Let us define the sets we are interested in.  Denote by $AC[0,1]$  the set of absolutely continuous, real-valued functions on $[0,1]$.  Let also $\|\cdot\|_p$ be the $L^p$-norm, where $p\in [1,\infty]$.
%Recall that  the Sobolev space $W^{1,p}$ consists of all absolutely continuous functions on $[0,1]$ whose derivative belongs to the space $L^p=L^{p}[0,1]$.  The Sobolev norm is defined as $\|f\|_{1,p} = \|f'\|_p+\|f\|_p$, but for us the seminorm $\|f'\|_p$ will be more natural. Note that $W^{1,\infty}$ is the set of Lipschitz functions, whereas $W^{1,1}$ is the set of all absolutely continuous functions.

%\vspace*{2mm}
%\noindent
%\textsc{Case $p\neq 1$.}
\begin{case}{$p\neq 1$.} Let first $p\in (1,\infty]$.  Consider the set
\begin{align*}
\KKK^p =  \{f\in AC[0,1] \colon f'\in L^p, \|f'\|_p\leq 1\}.
%\quad
%\LLL^p =  \KKK^p\cap M[0,1].
%\LLL^p &=  \{f\in AC_{\uparrow}[0,1] \colon f'\in L^p, \|f'\|_p\leq 1\}.
\end{align*}
For example, it is well-known that the set $\KKK^{\infty}$ consists of all functions on $[0,1]$ with Lipschitz constant at most $1$.
The set $\KKK^p$  contains all constant functions and hence is non-compact in $L^2$. However, if we add various  boundary conditions, we can obtain compact sets.
%see Lemma~\ref{lem:sobolev_balls_are_compact} below.
We will consider the following sets:
\begin{align*}
\KKK^p_{BM} &= \left\{f\in \KKK^p\colon f(0)=0\right\}, \\
\KKK^p_{CBM} &=  \left\{f\in \KKK^p\colon f(0)=f(1)=0\right\}.
\end{align*}
As we will see later, these sets correspond to the Brownian Motion (BM) and the Brownian Motion centered by its integral (CBM), respectively.
Define also the following two sets corresponding to the Brownian Bridge (BB) and the Brownian Bridge centered by its integral (CBB):
\begin{align*}
\KKK^p_{BB}  &= \left\{f\in \KKK^p\colon \int_0^1 f(s)\dd s = 0\right\},\\
\KKK^p_{CBB} &= \left\{f\in \KKK^p\colon \int_0^1 f(s)\dd s = 0, f(0)=f(1)\right\}.
\end{align*}
Let also $M[0,1]$ be the set of all non-decreasing functions on $[0,1]$ and consider the sets
$$
\LLL^p =  \KKK^p \cap M[0,1],
\quad
\LLL^p_{BM} = \KKK^p_{BM} \cap M[0,1],
\quad
\LLL^p_{BB} = \KKK^p_{BB} \cap M[0,1].
$$
It makes no sense to consider the sets $\LLL^p_{CBM}$ and $\LLL^p_{CBB}$ because these sets contain only the zero function.
\end{case}

%\vspace*{2mm}
%\noindent
%\textsc{Case $p = 1$.}
\begin{case}{$p = 1$.}
In the case $p=1$ the above definition yields sets which are not compact in $L^2$. Instead of absolutely continuous functions we have to pass to a more broad class of functions with bounded variation. For technical reasons it will be convenient to extend the functions from $[0,1]$ to $\R$.
Let $D$ be the set of all c\'adl\'ag functions $f:\R\to\R$ which are constant on the intervals $(-\infty, 0)$ and $[1,+\infty)$. The value of the function over the first interval need not coincide with the value over the other interval.  Let $TV(f)$ be the total variation of the function $f$ on $\R$. For $p=1$ we define
$$
\KKK^1 = \{f\in D \colon TV(f)\leq 1\}.
$$
We now impose various boundary conditions on the functions from $\KKK^1$. Denote by $f(t-)=\lim_{s\uparrow t} f(s)$ the left limit of $f$ at $t\in\R$.  Let $J_f(t)=f(t)-f(t-)$ be the jump of $f$ at $t\in \R$. Define
\begin{align*}
\KKK^1_{BM}  &= \left\{f\in \KKK^1   \colon f(0-) = 0, J_f(1)=0 \right\}, \\
\KKK^1_{CBM} &=  \left\{f\in \KKK^1 \colon f(0-)=f(1)=0 \right\}.
\end{align*}
Also, we consider the sets
\begin{align*}
\KKK^1_{BB}  &= \left\{f\in \KKK^1  \colon \int_0^1 f(s)\dd s = 0, J_f(0)=J_f(1)=0 \right\},\\
\KKK^1_{CBB} &= \left\{f\in \KKK^1  \colon \int_0^1 f(s)\dd s = 0, f(0)=f(1), J_f(0)=0\right\}.
\end{align*}
Denoting by $M$ the set of monotone non-decreasing functions on $\R$, we write
$$
\LLL^1 =  \KKK^1 \cap M,
\quad
\LLL^1_{BM} = \KKK^1_{BM} \cap M,
\quad
\LLL^1_{BB} = \KKK^1_{BB} \cap M.
$$
We are always interested in the values of the functions on the interval $[0,1]$, but for technical reason, we extended the functions to the whole real line. The reader may always restrict the functions under consideration to the interval $[0,1]$, but keep in mind that after such restriction the information about the value  $f(0-)$ gets lost.  Also, note that the jump at $0$ makes a contribution to the total variation $TV(f)$.
%Let $D_1[0,1]$ be the set of all c\'adl\'ag functions on the interval $[0,1]$ which are additionally required to be left-continuous at $1$. Denote by $TV_{[0,1]}(f)$ the total variation of a function $f:[0,1]\to\R$.
%Consider $BV[0,1]$, the space of c\'adl\'ag functions with bounded variation on $[0,1]$, endowed with the norm $\|f\|_{TV}= TV_{[0,1]} (f) + f(0)$.
%(As usual, we agree that functions differing on a set of measure $0$ are identified and that $TV_{[0,1]}$ denotes the minimum total variation among all functions coinciding with $f$ a.e.) Let $BV[0,1]$ be the space of all functions $f$ on $[0,1]$ having bounded total variation.
%For $p=1$ we define
%\begin{align*}
%\KKK^1 =  \{f\in D_1[0,1] \colon TV_{[0,1]}(f) \leq 1\},\quad
%\LLL^1 =  \KKK^1 \cap M[0,1].
%\end{align*}
%The sets $\KKK^1_*$ and $\LLL^1_*$ are then defined by imposing the same boundary conditions as in the case $p\neq 1$.
\end{case}

\vspace*{2mm}
\noindent
\textit{Notation.} Let us agree to write $\KKK_*^p$ (respectively, $\LLL^p_*$) if we mean one of the sets introduced above, where $*\in \{B
M,CBM,BB,CBB\}$ (respectively, $*\in \{BM,BB\}$).

\vspace*{2mm}
We will consider the sets $\KKK_*^p$ and $\LLL_*^p$ as subsets of $L^2=L^2[0,1]$. The next lemma shows that the embedding  of  $\KKK_*^p$ and $\LLL_*^p$ into $L^2$ is injective. It's proof will be given in Section~\ref{subsec:isonormal}.
\begin{lemma}\label{lem:ae_equal_hence_equal}
Let $p\in [1,\infty]$. If $f\in \KKK_*^p$ and $g\in \KKK_*^p$ are equal Lebesgue-a.e.\ on $[0,1]$, then they are equal everywhere on $[0,1]$ (for $p\neq 1$) or on $\R$ (for $p=1$).
\end{lemma}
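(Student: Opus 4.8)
The plan is to separate the regime $p\in(1,\infty]$, which reduces at once to continuity, from the genuinely more delicate case $p=1$, where the boundary conditions must be used. For $p\neq 1$ every element of $\KKK_*^p$ is absolutely continuous, hence continuous, on $[0,1]$. I would then invoke the elementary fact that two continuous functions on $[0,1]$ agreeing Lebesgue-a.e.\ agree everywhere: the coincidence set has full measure, hence is dense, and continuity of $f-g$ forces $f-g\equiv 0$ on the closure $[0,1]$. This settles the case $p\neq 1$ in one line, and no boundary condition is needed.

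For $p=1$ the functions are only c\'adl\'ag and of bounded variation, so they may jump, and I would argue in three stages. First, each of $f$ and $g$ has at most countably many discontinuities (being of bounded variation), so the set of common continuity points is co-countable and dense; since the a.e.-coincidence set is also dense, at every common continuity point $s$ one gets $f(s)=g(s)$ by approaching $s$ through coincidence points and using continuity. Next, for every $t\in[0,1)$ I would choose common continuity points $s_n\downarrow t$ and use right-continuity to conclude $f(t)=\lim_n f(s_n)=\lim_n g(s_n)=g(t)$. This yields $f=g$ on all of $[0,1)$, and in particular $f(0)=g(0)$ and $f(1-)=g(1-)$.

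The remaining two stages are where the definitions of the individual sets enter, and this is the real content of the lemma, since the hypothesis says nothing about the endpoints or the outer rays. At the right endpoint I would recover $f(1)=g(1)$ case by case: for $BM$ and $BB$ the condition $J_f(1)=0$ gives $f(1)=f(1-)=g(1-)=g(1)$; for $CBM$ both values equal $0$; and for $CBB$ the relation $f(1)=f(0)=g(0)=g(1)$ uses the already-established $f(0)=g(0)$. As $f$ and $g$ are constant on $[1,\infty)$ with this common value, they agree there. Symmetrically, on $(-\infty,0)$ they are constant with values $f(0-)$ and $g(0-)$, which I would match using $f(0-)=0$ (for $BM$, $CBM$) or $J_f(0)=0$ together with $f(0)=g(0)$ (for $BB$, $CBB$). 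Combining the three stages gives $f=g$ on all of $\R$.

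I expect the only delicate point to be the bookkeeping at $t=0$ and $t=1$ in the case $p=1$: the a.e.\ equality controls only the interior $(0,1)$, so the jump values and the two constant pieces must be pinned down individually from the definitions of $\KKK^1_{BM},\KKK^1_{CBM},\KKK^1_{BB},\KKK^1_{CBB}$. Once one checks that each of these sets carries exactly enough boundary information to fix these finitely many extra values, the proof closes routinely.
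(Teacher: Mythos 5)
Your proof is correct and follows essentially the same route as the paper's: continuity disposes of $p\neq 1$ at once, and for $p=1$ right-continuity yields $f=g$ on $[0,1)$, after which the same case-by-case use of the boundary conditions ($f(0-)=0$, $J_f(1)=0$, $J_f(0)=0$, $f(0)=f(1)$) pins down $f(0-)$ and $f(1)$ exactly as in the paper. Your intermediate stage via common continuity points is harmless but unnecessary --- since the coincidence set has full measure it is dense to the right of every $t\in[0,1)$, so one may apply right-continuity directly, which is what the paper does.
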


The next lemma can be established by standard methods.
%by identifying functions differing on a set of measure $0$. The next lemma is standard.
\begin{lemma}\label{lem:sobolev_balls_are_compact}
The sets $\KKK_*^p$ and $\LLL_*^p$ are compact, convex subsets of $L^2$ for all $p\in [1,\infty]$ and all admissible values of $*$.
\end{lemma}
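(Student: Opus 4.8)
The plan is to treat convexity and compactness separately, and to split the compactness argument according to whether $p>1$ (where the sets consist of genuinely continuous functions, so an Arzel\`a--Ascoli argument applies) or $p=1$ (where jumps are allowed and one must instead invoke Helly's selection theorem).

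Convexity is immediate. Each of the sets is cut out of $L^2$ by constraints which are individually convex: the seminorm bound $\|f'\|_p\le 1$ (respectively $TV(f)\le 1$ when $p=1$) defines the unit ball of a seminorm and is therefore convex; the conditions $f(0)=0$, $f(1)=0$, $f(0)=f(1)$, $\int_0^1 f(s)\dd s=0$, $J_f(0)=0$, $J_f(1)=0$ are all affine; and the monotonicity requirement $f\in M[0,1]$ (or $f\in M$) is an intersection of the half-spaces $\{f(t)\ge f(s)\}$ over $s<t$, hence convex. A finite intersection of convex sets is convex.

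For $p\in(1,\infty]$ I would prove compactness in the stronger topology of $C[0,1]$ and transfer it to $L^2$ using the continuity of the inclusion $C[0,1]\hookrightarrow L^2$. Let $q$ be the conjugate exponent of $p$. H\"older's inequality gives, for any $f$ with $\|f'\|_p\le1$ and any $s<t$,
$$
|f(t)-f(s)|=\left|\int_s^t f'(u)\,\dd u\right|\le \|f'\|_p\,|t-s|^{1/q}\le |t-s|^{1/q},
$$
so the family is uniformly H\"older-continuous, in particular equicontinuous. Each boundary condition pins the function at some point: for $\KKK^p_{BM}$ and $\KKK^p_{CBM}$ one has $f(0)=0$, while for the sets with $\int_0^1 f=0$ the mean-value property forces $f(t_0)=0$ for some $t_0\in[0,1]$; combined with the displayed estimate this yields a uniform bound $\|f\|_\infty\le 1$. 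Arzel\`a--Ascoli then shows that every sequence in the family has a uniformly convergent subsequence, so it remains to check the limit lies in $\KKK^p_*$. If $f_n\to f$ uniformly with $f_n\in\KKK^p_*$, the boundary and monotonicity conditions pass to $f$ since they are preserved under pointwise convergence; and because $(f_n')$ is bounded in $L^p$ one can pass to a further subsequence with $f_n'\rightharpoonup g$ weakly in $L^p$ (reflexivity for $1<p<\infty$, weak-$*$ compactness for $p=\infty$). Testing against $\ind_{[0,t]}\in L^q$ gives $f(t)=f(0)+\int_0^t g$, so $f\in AC[0,1]$ with $f'=g$, and weak lower semicontinuity of the norm yields $\|f'\|_p\le\liminf\|f_n'\|_p\le1$. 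Hence $f\in\KKK^p_*$, and the same reasoning covers $\LLL^p_*$.

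For $p=1$ equicontinuity fails and I would use Helly's selection theorem instead. The bound $TV(f)\le1$ together with the relevant boundary condition (for instance $f(0-)=0$, or $\int_0^1 f=0$) again yields a uniform sup-bound on $[0,1]$; Helly extracts a subsequence converging pointwise to a function $f$ of bounded variation, and, the functions being uniformly bounded on the finite interval $[0,1]$, dominated convergence upgrades this to $L^2$ convergence. Lower semicontinuity of the total variation under pointwise limits gives $TV(f)\le\liminf TV(f_n)\le1$, and the integral conditions survive by dominated convergence. The main obstacle lies precisely here: one must produce the correct c\`adl\`ag representative of the limit and verify that the one-point conditions on the jumps and boundary values ($J_f(0)=0$, $J_f(1)=0$, $f(0-)=f(1)=0$, and so on) are preserved, even though pointwise convergence may fail at the countably many discontinuity points. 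This is handled by normalising every function to be c\`adl\`ag, arranging the Helly limit to hold off an exceptional countable set, and then passing to the c\`adl\`ag version of $f$ there; Lemma~\ref{lem:ae_equal_hence_equal} then identifies this representative uniquely as an element of $\KKK^1_*$. The monotone sets $\LLL^1_*$ are treated identically, monotonicity being stable under pointwise limits.
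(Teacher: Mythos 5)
The paper gives no proof of this lemma at all --- it is dismissed with the remark that it ``can be established by standard methods'' --- so there is no official argument to compare against; your proposal is a correct execution of exactly those standard methods. The convexity argument and the whole $p\in(1,\infty]$ case are complete and correct: H\"older gives uniform $1/q$-H\"older equicontinuity, each admissible $*$ pins the function at a point (via the boundary condition or the mean value of $\int_0^1 f=0$) so $\|f\|_\infty\le 1$, Arzel\`a--Ascoli extracts a uniform limit, and the weak (resp.\ weak-$*$, using separability of $L^1$ for $p=\infty$) compactness of $(f_n')$ together with weak lower semicontinuity of the norm puts the limit back in the set; compactness in $C[0,1]$ then transfers to $L^2$ through the continuous embedding.

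For $p=1$ your strategy (Helly, dominated convergence, lower semicontinuity of $TV$) is right, but the step you yourself flag as the main obstacle is asserted rather than proved: you must check that passing from the pointwise Helly limit to its c\`adl\`ag version and then removing the offending endpoint jumps does not increase the total variation, and that the adjusted representative still satisfies all case-specific conditions simultaneously (e.g.\ for $*={\rm CBB}$, regularizing at $0$ replaces $f(0)$ by $g(0+)$, so the constraint $f(0)=f(1)$ must be restored by also resetting the constant on $[1,\infty)$, and one then has to re-verify $TV\le 1$ for the modified function). These are not difficult, but they are genuinely case-by-case; notably, the paper carries out precisely these manipulations --- constructing from the weak limit measure $\mu$ a function $h$ with the correct c\`adl\`ag and jump behaviour, separately for $*={\rm BM},{\rm CBM},{\rm BB},{\rm CBB}$ --- in Step 2 of the proof of Lemma~\ref{lem:continuity}, so you could close your gap by adapting those constructions verbatim (the measure form of Helly used there is equivalent to your pointwise form). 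With that completion, and Lemma~\ref{lem:ae_equal_hence_equal} to identify the representative uniquely, your proof is sound.
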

%\begin{proof}
%The compactness can be easily deduced from the fact that any function $f\in \KKK^p$ is H\"older continuous %with exponent $q>1$, where $\frac 1p + \frac 1q=1$, and constant $1$. Namely, for every $0\leq x\leq y\leq 1$ %by the H\"older inequality we have
%$$
%|f(x)-f(y)| = \left|\int_x^y f'(s) \dd s\right|
%\leq
%\|f'\|_p \, |y-x|^{1/q}
%\leq |y-x|^{1/q}.
%$$
%\end{proof}

\subsection{Main results on intrinsic volumes}\label{subsec:main_results}
The only known result computing explicitly intrinsic volumes of infinite-dimensional convex bodies seems to be the following theorem due to~\citet{GV01} and~\citet{fG03}.
\begin{theorem}\label{theo:gao_vitale}
For every $k\in\N$ it holds that
\begin{equation}\label{eq:gao_vitale_theo}
V_k(\LLL_{BM}^{1})
=
\frac{\kappa_k}{k!}
=
\frac{\pi^{k/2}}{\Gamma\left(\frac{k}{2}+1\right) k!},
%=\frac{\kappa_k}{k!},
\quad
V_k(\LLL_{BB}^{1})
%=\frac{\kappa_{k+1}}{2k!}.
=\frac{\kappa_{k+1}}{2k!}
=\frac{\pi^{(k+1)/2}}{2\Gamma\left(\frac{k}{2}+\frac 32\right) k!}.
\end{equation}
\end{theorem}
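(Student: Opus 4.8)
The plan is to realise each body as the closed convex hull of an explicit curve in $L^2=L^2[0,1]$, to reduce the $k$-th intrinsic volume to an expected $k$-dimensional volume of a Gaussian image of that curve (Sudakov's formula for $k=1$ and Tsirelson's extension for general $k$), and then to evaluate this expectation through the combinatorics of convex hulls of random walks, taking a Brownian scaling limit at the end. First I would record the geometry. A monotone non-decreasing $f$ with $f(0-)=0$, $J_f(1)=0$ and $TV(f)\le 1$ is exactly $f(t)=\mu([0,t])$ for a sub-probability measure $\mu$ on $[0,1)$, so $f=\int e_s\,\dd\mu(s)$ with $e_s=\ind_{[s,1]}$; hence $\LLL^1_{BM}=\overline{\conv}(\{e_s:s\in[0,1)\}\cup\{0\})$. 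After the isometric reflection $t\mapsto 1-t$ of $L^2$ this becomes $C:=\overline{\conv}\{h_u:u\in[0,1]\}$ with $h_u=\ind_{[0,u]}$, the \emph{Wiener curve}, for which $\langle h_u,h_v\rangle=\min(u,v)$ and $\|h_u-h_v\|^2=|u-v|$. For $\LLL^1_{BB}$ the constraint $\int_0^1 f=0$ amounts to centering, i.e.\ to replacing $h_u$ by $\tilde h_u:=h_u-u\,h_1$. Compactness and convexity, needed below to pass to limits, are supplied by Lemma~\ref{lem:sobolev_balls_are_compact}.

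Next I would invoke the Gaussian representation of the intrinsic volumes. Let $\xi_1,\dots,\xi_k$ be independent copies of the isonormal process on $L^2$, so that $\langle\xi_i,h_u\rangle$ has the law of a Brownian motion $W^{(i)}_u$, and set $\Pi f=(\langle\xi_1,f\rangle,\dots,\langle\xi_k,f\rangle)\in\R^k$. Writing Kubota's formula~\eqref{eq:kubota} with a uniform random $k$-subspace realised as the span of the rows of a $k\times n$ matrix $G$ with independent standard Gaussian rows, and accounting for the Jacobian $\sqrt{\det GG^{\top}}$ relating the Gaussian frame to an orthonormal one (its expectation being a product of $\chi$-means, and independent of the subspace direction), the finite-dimensional approximants converge, as the ambient dimension tends to infinity, to the identity
\[
V_k(C)=\frac{(2\pi)^{k/2}}{k!\,\kappa_k}\,\E\bigl[\Vol_k(\Pi C)\bigr].
\]
Its rigorous justification rests on the definition of the infinite-dimensional $V_k$ as a supremum over inscribed finite-dimensional sets together with monotonicity, where compactness enters. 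Since $\Pi h_u=(W^{(1)}_u,\dots,W^{(k)}_u)=:B_u$ is a standard $k$-dimensional Brownian motion, $\Pi C=\conv\{B_u:u\in[0,1]\}$, so $V_k(C)$ is a fixed multiple of the expected volume of the convex hull of $k$-dimensional Brownian motion.

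To evaluate this expectation \emph{without presupposing} the Brownian convex-hull formula, I would stay at the discrete level. The inscribed order-simplex $O_n=\tfrac1{\sqrt n}\{1\ge x_1\ge\cdots\ge x_n\ge0\}$, obtained from the uniform partition of the Wiener curve, has Gaussian image $\tfrac1{\sqrt n}\conv\{S_0,\dots,S_n\}$, where $S_j=\sum_{i\le j}g^{(i)}$ is a $k$-dimensional random walk with i.i.d.\ $N(0,I_k)$ increments. The crux is the exact evaluation of $\E[\Vol_k\conv\{S_0,\dots,S_n\}]$: I would decompose the volume into simplices over the facets, each spanned by $k$ of the points, so that taking expectations reduces $\E[\Vol_k]$ to a sum over $k$-subsets of times of terms $\E[|\det(\cdot)|]$ weighted by the probability that the corresponding points form a facet; for symmetric, exchangeable, generic increments these probabilities are distribution-free and computable via Sparre-Andersen--type absorption identities. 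Passing to the Donsker limit $n\to\infty$ (controlling the interchange of limit and expectation by uniform integrability, and using Hausdorff convergence of the hulls) then yields $\E[\Vol_k(\Pi C)]=\kappa_k^2/(2\pi)^{k/2}$ and hence $V_k(\LLL^1_{BM})=\kappa_k/k!$. I expect this combinatorial–probabilistic evaluation, together with the limit interchange, to be the main obstacle.

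Finally I would treat the base case and the bridge. For $k=1$ everything collapses to Sudakov's formula and the identity $\E[\max_{[0,1]}W-\min_{[0,1]}W]=2\sqrt{2/\pi}$, giving $V_1(\LLL^1_{BM})=2=\kappa_1$ as a consistency check. For $\LLL^1_{BB}$ the same machinery applies verbatim with $h_u$ replaced by $\tilde h_u=h_u-u\,h_1$, so that $\Pi\tilde h_u=B_u-uB_1$ is a $k$-dimensional Brownian bridge and the discrete object becomes the convex hull of a random-walk bridge; carrying out the analogous facet computation for the bridge produces the extra factor $\tfrac12$ and the shift to $\kappa_{k+1}$, yielding $V_k(\LLL^1_{BB})=\kappa_{k+1}/(2k!)$.
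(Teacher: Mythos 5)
Your geometric setup is sound and consistent with the paper's viewpoint: $\LLL_{BM}^{1}$ is indeed (isometric to) the closed convex hull of the Wiener spiral, and via Tsirelson's Theorem~\ref{theo:tsirelson_spectrum} the claimed value of $V_k(\LLL_{BM}^{1})$ is \emph{equivalent} to the Brownian convex-hull formula~\eqref{eq:E_Vol_conv_hull_BM} --- the paper says exactly this in Section~\ref{subsec:gao_vitale_eldan}. The genuine gap is at the step you yourself flag as the crux: the exact evaluation of $\E\,\Vol_k\conv\{S_0,\dots,S_n\}$ for the Gaussian walk. Two problems. First, the cone decomposition from the vertex $S_0=0$ gives
\begin{equation*}
\E\,\Vol_k\conv\{S_0,\dots,S_n\}
=\sum_{1\le j_1<\dots<j_k\le n}\frac{1}{k!}\,
\E\bigl[\,|\det(S_{j_1},\dots,S_{j_k})|\,\ind\{\text{facet}\}\bigr],
\end{equation*}
and the determinant is strongly correlated with the facet event, so your proposed factorization into $\E|\det(\cdot)|$ times a distribution-free facet probability is simply false; Sparre Andersen-type exchangeability arguments give distribution-free \emph{probabilities}, not this joint expectation, and upgrading them to handle the weighted indicator is a substantial theorem in its own right, not a routine absorption identity. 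Second, because of the equivalence noted above, assuming this evaluation (for $k\ge 2$) amounts to assuming Eldan's result, i.e.\ a statement equivalent to the theorem you are proving --- the argument is circular at its central step. The same objection applies, with additional conditioning difficulties, to your one-sentence treatment of the bridge case, where the factor $\tfrac12$ and the shift to $\kappa_{k+1}$ are asserted rather than derived.

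The route the paper intends is different and avoids this entirely: Theorem~\ref{theo:gao_vitale} is quoted from Gao and Vitale, whose proof (mirrored by the paper's own proof of Theorem~\ref{theo:V_k_Lipschitz_balls} in Section~\ref{subsec:Lipschitz_balls}) is deterministic finite-dimensional geometry plus a limit. One first proves the discrete Theorem~\ref{theo:gao_vitale_discrete} by computing $V_k$ of the simplex $\TTT_{BM,n}$ exactly, as a sum over $k$-faces of square roots of Gram determinants --- no facet probabilities and no Gaussian expectations enter. Then one inscribes the simplices spanned by $\ind_{[0,i/n]}$, $0\le i\le n$ (isometric to $n^{-1/2}\TTT_{BM,n}$), uses monotone approximation together with the lower semicontinuity of $V_k$, and identifies the Riemann sum $n^{-k}\frac{1}{k!}\sum_{A_{n,k}}\bigl((d_1/n)\cdots(d_k/n)\bigr)^{-1/2}$ in the limit with the Dirichlet integral $\frac{1}{k!}\int_{\{a_i>0,\,\sum a_i\le 1\}}(a_1\cdots a_k)^{-1/2}\,\dd a_1\cdots\dd a_k=\kappa_k/k!$; the bridge case carries the extra factor $(1-\sum a_i)^{1/2}$ and yields $\kappa_{k+1}/(2k!)$. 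If you want to salvage your probabilistic route, you must supply an independent proof of the random-walk volume identity (or of~\eqref{eq:E_Vol_conv_hull_BM} directly, as Eldan did); otherwise the honest direction of the implication is the one the paper draws, from Theorem~\ref{theo:gao_vitale} \emph{to} the Brownian hull formula, not the reverse.
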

In fact, Gao and Vitale~\cite{GV01,fG03} stated their results in slightly different terms. They considered the Wiener spiral (introduced by Kolmogorov~\cite{aK40})
and the Brownian bridge spiral,
%which are the following subsets of $L^2$:
$$
\{\ind_{[0,t]}(\cdot) \colon t\in [0,1]\}\subset L^2
\;\;\;
\text{ and }
\;\;\;
\{\ind_{[0,t]}(\cdot) - t\colon t\in [0,1]\}\subset L^2,
$$
and computed the intrinsic volumes of the closed convex hulls of these sets. It is not difficult to see that these closed convex hulls are in fact isometric to $\LLL_{BM}^{1}$ and $\LLL_{BB}^{1}$.
%The closed convex hull of the Wiener spiral consists of all non-increasing functions in $L^2$ that are bounded between $0$ and $1$. It is therefore isometric to .
%\citet{GV01} calculated the intrinsic volumes of $\LLL_{BM}^{1}$:
%\begin{equation}\label{1319}
%V_k(\LLL_{BM}^{1})=\frac{\kappa_k}{k!}.
%\end{equation}
%In a similar way, one can consider the Brownian bridge spiral, a subset of $L^2$ given by
%It is not difficult to show that the closed convex hull of the Brownian bridge spiral consists of all  non-increasing functions that have total variation at most $1$ and zero integral. It is therefore isometric to $\LLL_{BB}^{1}$. %\citet{fG03} calculated the intrinsic volumes of $\LLL_{BB}^{1}$:
%\begin{equation}\label{2309}
%V_k(\LLL_{BB}^{1})=\frac{\kappa_{k+1}}{2k!}.
%\end{equation}

We will complement Theorem~\ref{theo:gao_vitale} (which deals with TV-balls, $p=1$) by proving a similar result for Lipschitz balls, $p=\infty$.
\begin{theorem}\label{theo:V_k_Lipschitz_balls}
For every $k\in\N$ it holds that
\begin{equation}
V_k(\KKK_{BM}^{\infty}) = \frac{\pi^{k/2}}{\Gamma\left(\frac 32 k +1 \right)},
\quad
V_k(\KKK_{BB}^{\infty}) = \frac{\pi^{(k+1)/2}}{2\Gamma\left(\frac 32 k +\frac 32\right)}.
\end{equation}
Also, $V_k(\LLL_{BM}^{\infty}) = 2^{-k} V_k(\KKK_{BM}^{\infty})$ and $V_k(\LLL_{BB}^{\infty}) = 2^{-k} V_k(\KKK_{BB}^{\infty})$.
\end{theorem}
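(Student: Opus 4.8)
The plan is to realize each of the four sets as a \emph{zonoid} in $L^2$ and then read off its intrinsic volumes from the generating curve. First I would compute the support functions. Writing $g=f'$ and using the boundary conditions to solve for $f$, integration by parts gives, for every $u\in L^2$,
$$
\sup_{f\in\KKK^\infty_{BM}}\langle f,u\rangle
=\sup_{\|g\|_\infty\le1}\int_0^1 g(s)\,\langle\ind_{[s,1]},u\rangle\,\dd s
=\int_0^1\bigl|\langle\ind_{[s,1]},u\rangle\bigr|\,\dd s,
$$
so $\KKK^\infty_{BM}$ is the symmetric zonoid generated by the curve $s\mapsto\phi_s:=\ind_{[s,1]}$ with Lebesgue measure. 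For $\KKK^\infty_{BB}$ the constraint $\int_0^1 f=0$ fixes the additive constant, and the same computation yields the centred curve $\phi_s=\ind_{[s,1]}-(1-s)$. The monotone sets $\LLL^\infty_{BM}$ and $\LLL^\infty_{BB}$ are obtained by replacing $\|g\|_\infty\le1$ with $0\le g\le1$; their support functions are $\int_0^1\max(0,\langle\phi_s,u\rangle)\,\dd s$ with the \emph{same} curves, i.e.\ they are the one-sided zonoids $\int_0^1[0,\phi_s]\,\dd s$ rather than the symmetric ones $\int_0^1[-\phi_s,\phi_s]\,\dd s$.

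The crucial point is that the Gram kernels of these curves are the covariances of the relevant Gaussian processes: $\langle\phi_{s_i},\phi_{s_j}\rangle=1-\max(s_i,s_j)$, which after the substitution $s\mapsto1-s$ becomes the Brownian covariance $\min(s_i,s_j)$, while in the bridge case $\langle\phi_{s_i},\phi_{s_j}\rangle=\min(s_i,s_j)-s_is_j$ is the Brownian bridge covariance. As a consistency check, for $k=1$ Sudakov's formula together with $\sup_{f\in\KKK^\infty_{BM}}\int_0^1 f\,\dd B_t=\int_0^1|B_1-B_t|\,\dd t\eqdistr\int_0^1|B_t|\,\dd t$ already reproduces $V_1(\KKK^\infty_{BM})=\sqrt{2\pi}\,\E\int_0^1|B_t|\,\dd t=4/3$. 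I would then invoke the formula for the intrinsic volumes of a zonoid in terms of its generating measure: for the symmetric zonoid,
$$
V_k(\KKK^\infty_*)=\frac{2^k}{k!}\int_{[0,1]^k}\sqrt{\det\bigl(\langle\phi_{s_i},\phi_{s_j}\rangle\bigr)_{i,j=1}^k}\;\dd s_1\cdots\dd s_k,
$$
and the one-sided zonoid carries the same integral without the factor $2^k$, which immediately yields the asserted relation $V_k(\LLL^\infty_*)=2^{-k}V_k(\KKK^\infty_*)$.

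It then remains to evaluate the integral. Since the integrand is symmetric, I restrict to $0<s_1<\dots<s_k<1$ and pay a factor $k!$. Here the classical determinant identities enter: $\det(\min(s_i,s_j))=s_1\prod_{i=2}^k(s_i-s_{i-1})$ and $\det(\min(s_i,s_j)-s_is_j)=(1-s_k)\,s_1\prod_{i=2}^k(s_i-s_{i-1})$ on the ordered simplex. Passing to the increment coordinates $u_1=s_1,\ u_i=s_i-s_{i-1}$ (and, in the bridge case, $u_{k+1}=1-s_k$) turns the integral into a Dirichlet integral $\int\prod u_i^{1/2}$, equal to $\Gamma(3/2)^k/\Gamma(\tfrac32k+1)$ for the motion and $\Gamma(3/2)^{k+1}/\Gamma(\tfrac32k+\tfrac32)$ for the bridge. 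Collecting the factors $2^k$ and $\Gamma(3/2)=\sqrt\pi/2$ gives exactly $V_k(\KKK^\infty_{BM})=\pi^{k/2}/\Gamma(\tfrac32k+1)$ and $V_k(\KKK^\infty_{BB})=\pi^{(k+1)/2}/(2\Gamma(\tfrac32k+\tfrac32))$.

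The genuine obstacle is not the algebra but the justification of the displayed zonoid formula in the \emph{infinite-dimensional} setting, together with its precise normalisation. In finite dimensions the formula is the classical sum over $k$-subsets of $k$-volumes of sub-parallelepipeds of a zonotope; to reach the compact zonoids $\KKK^\infty_*$ I would approximate the generating measure by finitely supported measures, whose zonotopes are finite-dimensional subsets of (or converge to) the zonoid, and then control $V_k$ under this approximation using the definition of $V_k$ as a supremum over finite-dimensional subsections together with Lemma~\ref{lem:sobolev_balls_are_compact}. Matching the constant is exactly the role of Sudakov's formula at $k=1$ and of Tsirelson's theorem for general $k$, which tie the intrinsic volumes of these zonoids to the isonormal Gaussian process; this is where the main work lies, the determinant and Dirichlet-integral computations above being routine.
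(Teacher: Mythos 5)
Your proposal is correct and is essentially the paper's own proof in zonoid packaging: the paper approximates $\LLL_{BM}^{\infty}$ and $\LLL_{BB}^{\infty}$ by the parallelotopes spanned by the hat functions $f_{l,n}$ --- exactly your zonotope discretization of the generating curve $s\mapsto\ind_{[s,1]}$ --- applies the finite-dimensional parallelotope intrinsic-volume formula with the same Gram determinants $\det(\min(s_i,s_j))$ and $\det(\min(s_i,s_j)-s_is_j)$, passes to the limit via lower semicontinuity of $V_k$ (precisely the justification step you flagged as the main work), and evaluates the same Dirichlet integral. The only cosmetic difference is that the paper obtains the $\KKK$ values from the $\LLL$ values through the affine identity $\KKK_{*}^{\infty}=2(\LLL_{*}^{\infty}-h)$ with $h(t)=t/2$, whereas you carry the factor $2^k$ inside the symmetric-zonoid formula; these are the same observation.
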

The proof of Theorem~\ref{theo:V_k_Lipschitz_balls} will be given in Section~\ref{subsec:Lipschitz_balls}. Theorems~\ref{theo:gao_vitale} and~\ref{theo:V_k_Lipschitz_balls} have interesting probabilistic consequences which will be discussed in Sections~\ref{subsec:gao_vitale_eldan} and~\ref{subsec:brownian_zonoids}.
%A result equivalent to the first equation in~\eqref{eq:gao_vitale_theo} but stated in terms of the convex hull of the Brownian motion was proved independently by~\citet{rE12}; see Section~\ref{subsec:gao_vitale_eldan} for more on this. As an analogue of this fact for Lipschitz balls, we will show in Section~\ref{subsec:brownian_zonoids} that Theorem~\ref{theo:V_k_Lipschitz_balls} can be stated in terms of the zonoid (or the Aumann integral) generated by the Brownian motion.

For the sets $\KKK^p_*$ and $\LLL^p_*$ with general $p\in [1,\infty]$, we will compute only the first intrinsic volume. To state this result, let $\{W(t)\colon t\in [0,1]\}$ be a standard Brownian motion. Consider the following Gaussian processes on $[0,1]$ (which are the Brownian motion, the Brownian bridge, the centered Brownian motion and the centered Brownian bridge):
\begin{align}
&X_{BM}(t)  =  W(t),  \label{eq:X_*_def_1} \\
&X_{CBM}(t) =  W(t)-\int_0^1 W(s)\dd s, \label{eq:X_*_def_2} \\
&X_{BB}(t)  =  W(t)-tW(1), \label{eq:X_*_def_3}\\
&X_{CBB}(t) =  W(t)- tW(1) - \int_0^1 (W(s)-sW(1))\dd s. \label{eq:X_*_def_4}
\end{align}
These processes are special cases of the Gaussian free field on $[0,1]$ with suitable boundary conditions.
%; see Section~\ref{sec:V_1_sobolev_balls} and, in particular, Theorem~\ref{}.
\begin{theorem}\label{theo:V_1_sobolev}
Let $p\in [1,\infty]$ and $\frac 1p + \frac 1q = 1$ Then, for every admissible $*$ it holds that
$$
V_1(\KKK_*^p) = \sqrt {2\pi}\, \E \|X_*\|_q, \quad
V_1(\LLL_*^p) = \sqrt {2\pi}\, \E \|\max(X_*,0)\|_q.
$$
\end{theorem}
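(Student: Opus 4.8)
The plan is to use Sudakov's formula, which relates the first intrinsic volume $V_1(T)$ of a convex set $T$ in Hilbert space to the expected maximum of the isonormal Gaussian process over $T$. Specifically, Sudakov's formula states (for a convex compact set $T \subset H$) that
$$
V_1(T) = \sqrt{2\pi}\, \E \sup_{f \in T} \langle \xi, f \rangle,
$$
where $\xi$ is the isonormal Gaussian process on $H = L^2[0,1]$, i.e. $\langle \xi, f \rangle$ is a centered Gaussian family with covariance $\E[\langle \xi, f\rangle \langle \xi, g\rangle] = \langle f, g\rangle_{L^2}$. Since $V_1$ coincides with the mean width up to a constant, the whole task reduces to identifying the support function $\sup_{f \in T} \langle \xi, f\rangle$ of each set $\KKK_*^p$ and $\LLL_*^p$ explicitly as a functional of the isonormal process, and then recognizing that functional as (plus or minus) an $L^q$-norm of one of the Gaussian processes $X_*$.

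\emph{Identifying the support functional.} I would fix $*$ and write an element of $\KKK_*^p$ as $f(t) = \int_0^t g(s)\,\dd s$ (plus a possible constant fixed by the boundary condition), where $g = f'$ satisfies $\|g\|_p \le 1$ together with the linear constraints encoding the boundary conditions. The isonormal pairing $\langle \xi, f\rangle = \int_0^1 \xi(t) f(t)\,\dd t$ should be rewritten, via integration by parts, as a pairing against $g = f'$. Concretely, $\int_0^1 \xi(t)\big(\int_0^t g(s)\,\dd s\big)\dd t = \int_0^1 g(s)\Phi(s)\,\dd s$ for a suitable process $\Phi$ obtained by integrating $\xi$ and applying the boundary conditions; the claim is that $\Phi$ is precisely $X_*$ (in distribution), the correct one of the four processes in \eqref{eq:X_*_def_1}--\eqref{eq:X_*_def_4}. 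The boundary/integral constraints on $f$ translate, after this duality, exactly into the centering operations (subtracting $tW(1)$, subtracting $\int_0^1$) that distinguish BM, CBM, BB, CBB. Taking the supremum over $\|g\|_p \le 1$ then gives, by the duality of $L^p$ and $L^q$,
$$
\sup_{f \in \KKK_*^p} \langle \xi, f\rangle = \sup_{\|g\|_p \le 1} \int_0^1 g\,X_*\,\dd s = \|X_*\|_q,
$$
which yields the first formula. For the monotone sets $\LLL_*^p$ one has the additional constraint $g \ge 0$, so the supremum is over $\|g\|_p \le 1$, $g \ge 0$; the dual of this one-sided constraint replaces $X_*$ by its positive part, giving $\sup = \|\max(X_*,0)\|_q$ and the second formula. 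Finally I would take expectations and multiply by $\sqrt{2\pi}$.

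\emph{Verifying the identity of the processes.} The central bookkeeping step is checking that the process $\Phi$ arising from integrating $\xi$ against the boundary conditions really is distributed as $X_*$. The cleanest route is to compute covariances: both $\Phi$ and $X_*$ are centered Gaussian, so it suffices to match $\E[\Phi(s)\Phi(t)]$ with $\E[X_*(s)X_*(t)]$. For $X_{BM} = W$ this is $\min(s,t)$; for the bridge and centered versions one gets the familiar modifications. The constraints defining each set determine which linear functionals of $\xi$ are annihilated (e.g. $f(0)=0$ removes a boundary term, $\int_0^1 f = 0$ projects out the mean), and I would check that each such projection on the $f$-side corresponds, under the integration-by-parts duality, to the matching centering on the $\Phi$-side. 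One technical point worth care is the extension to $\R$ and the total-variation / càdlàg setting when $p=1$, where $g = f'$ becomes a signed measure and the duality $\|g\|_1 \le 1 \leftrightarrow \|X_*\|_\infty$ must be justified via the identity $\sup_{TV \le 1} \int X_*\,\dd \mu = \|X_*\|_\infty$ (sup norm).

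\emph{Main obstacle.} The genuinely delicate part is \textbf{justifying the use of Sudakov's formula and the interchange of supremum and expectation in infinite dimensions}, i.e. confirming that $\sup_{f\in T}\langle \xi, f\rangle$ is a well-defined, a.s.\ finite, measurable Gaussian maximum equal to the limiting mean width of finite-dimensional approximants $T' \uparrow T$, and that this supremum indeed equals $\|X_*\|_q$ almost surely rather than merely in distribution of the terminal value. This requires knowing that $X_*$ has continuous (or at least $L^q$) sample paths so that the $L^q$-norm is finite a.s., and that the duality $\sup_{\|g\|_p \le 1}\int g\,X_* = \|X_*\|_q$ holds pathwise; the convexity and compactness from Lemma~\ref{lem:sobolev_balls_are_compact} together with injectivity from Lemma~\ref{lem:ae_equal_hence_equal} are exactly what make the Sudakov machinery applicable. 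Once these foundational points are secured, the remaining computation is the routine integration-by-parts and covariance-matching described above.
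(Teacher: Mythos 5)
Your route is the paper's route: Sudakov's Theorem~\ref{theo:sudakov} combined with an integration-by-parts representation of the isonormal process over the Sobolev balls (the paper's Lemma~\ref{lem:isonormal_alternative_rep}, feeding into Theorem~\ref{prop:mean_width_sobolev}), followed by $L^p$--$L^q$ duality; identifying the process arising after integration by parts with $X_*$ via covariances rather than the paper's pathwise identities with time reversal is a harmless variation. But there is a genuine gap at precisely the point you dispose of in one sentence. For $*\in\{CBM,CBB\}$ the boundary conditions do \emph{not} merely ``translate into centerings'': since $f(0)=f(1)$, the derivative $g=f'$ carries the linear constraint $\int_0^1 g(s)\,\dd s=0$, so your supremum runs over a \emph{proper} subset of the unit ball of $L^p$, and duality then yields only
$$
\sup\left\{\int_0^1 g\,X_*\,\dd s\;:\;\|g\|_p\leq 1,\ \int_0^1 g\,\dd s=0\right\}
=\min_{c\in\R}\|X_*-c\|_q,
$$
the $L^q$-distance of $X_*$ to the constants, which is a priori smaller than $\|X_*\|_q$ and for $q\neq 2$ does not coincide with it pathwise. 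Deducing the stated formula therefore requires an extra argument tied to the specific centering of $X_{CBM}$ and $X_{CBB}$; this is exactly where the paper works hardest, modifying the H\"older-extremal function by an affine term $a+bt$ to restore the boundary conditions and using $\int_0^1 X_*(t)\,\dd t=0$ so that the value of the functional is unchanged (their~\eqref{eq:tech1_boundary_cond}), with an additional check of the jump conditions $J_g(0)=J_g(1)=0$ in the case $p=1$. Your proposal contains no such argument, so for $*=CBM,CBB$ the claimed identity simply does not follow from the unconstrained duality you invoke. (For $*=BM,BB$, and for the monotone sets $\LLL_*^p$ with $*\in\{BM,BB\}$ --- the only values of $*$ for which $\LLL_*^p$ is defined --- no linear constraint on $g$ arises and your one-sided duality $\sup\{\int gv:\|g\|_p\leq1,\,g\geq0\}=\|v^+\|_q$ is correct, matching the paper.)

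A second, smaller misalignment: your ``main obstacle'' paragraph aims slightly off target. Sudakov's formula is quoted wholesale in the paper; the technical content is that the f.d.d.\ identity of Lemma~\ref{lem:isonormal_alternative_rep} upgrades to an identity in law for the \emph{suprema}, i.e.\ that one computes the supremum of the correct version of the isonormal process. The paper settles this by proving the GC-property: the map $f\mapsto\int_0^1 X(t)\,\dd f(t)$ is continuous on the $L^2$-compact sets $\KKK_*^p$ and $\LLL_*^p$ (Lemma~\ref{lem:continuity}), where the boundary hypotheses on $X$ at $0$ and $1$ are essential (the paper exhibits a counterexample when they fail, relevant precisely in your delicate $p=1$ c\`adl\`ag setting). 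Covariance matching alone does not deliver this. Finally, note that writing $\langle\xi,f\rangle=\int_0^1\xi(t)f(t)\,\dd t$ is only heuristic --- white noise is not an $L^2$ path; the legitimate pairing is the It\^o integral $\int_0^1 f\,\dd W$, and it is this stochastic integration by parts (justified in the paper via~\cite[Theorem~2.3.7]{kuo_book}) that underlies the representation you need.
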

We will state and prove a more general result, Theorem~\ref{prop:mean_width_sobolev}, in Section~\ref{sec:V_1_sobolev_balls}.  In the special cases $p=1$ and $p=\infty$  we can use Theorem~\ref{theo:V_1_sobolev} to obtain explicit results.
\begin{proposition}\label{prop:V1_p_1}
In the case $p=1$ we have
\begin{align}
&V_1(\KKK_{BM}^{1}) = \pi,&
&V_1(\KKK_{BB}^{1}) = \pi \log 2, \label{eq:V_1_p_1_line1}\\
&V_1(\LLL_{BM}^{1}) = 2,&
&V_1(\LLL_{BB}^{1}) = \frac {\pi}{2}. \label{eq:V_1_p_1_line2}
\end{align}
\end{proposition}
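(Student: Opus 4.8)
The plan is to invoke Theorem~\ref{theo:V_1_sobolev} with $p=1$, so that the conjugate exponent is $q=\infty$ and the $L^q$-norm becomes the supremum norm. Since the processes $X_{BM}=W$ and $X_{BB}$ from~\eqref{eq:X_*_def_1} and~\eqref{eq:X_*_def_3} have continuous sample paths, the essential supremum agrees almost surely with the pointwise supremum, and the theorem reduces each first intrinsic volume to an expected maximum of a Gaussian process:
$$
V_1(\KKK_*^1) = \sqrt{2\pi}\,\E\sup_{t\in[0,1]}|X_*(t)|, \qquad V_1(\LLL_*^1) = \sqrt{2\pi}\,\E\sup_{t\in[0,1]}\max(X_*(t),0).
$$
For the $\LLL$-sets a further simplification is available: because $X_{BM}(0)=X_{BB}(0)=0$, the supremum of each of these two processes is nonnegative almost surely, so that $\sup_t\max(X_*(t),0)=\sup_t X_*(t)$. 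Thus all four quantities are expected suprema of the Brownian motion $X_{BM}=W$ and the Brownian bridge $X_{BB}$, two-sided for the $\KKK$-sets and one-sided for the $\LLL$-sets.

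It then remains to recall four classical facts about the extrema of $W$ and of $X_{BB}$ on $[0,1]$. For the one-sided maxima, the reflection principle gives $\sup_t W(t)\eqdistr |W(1)|$, hence $\E\sup_t W(t)=\sqrt{2/\pi}$; and the bridge maximum satisfies $\P(\sup_t X_{BB}(t)>x)=\eee^{-2x^2}$ for $x\ge 0$, whence $\E\sup_t X_{BB}(t)=\int_0^\infty \eee^{-2x^2}\dd x=\tfrac12\sqrt{\pi/2}$. For the two-sided maxima one uses the theta-type series for the relevant distribution functions: the maximum of $|W|$ has mean $\E\sup_t|W(t)|=\sqrt{\pi/2}$, while $\sup_t|X_{BB}(t)|$ is the Kolmogorov--Smirnov statistic, whose mean equals $\E\sup_t|X_{BB}(t)|=\sqrt{\pi/2}\,\log 2$.

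Substituting these four values and multiplying by $\sqrt{2\pi}$ yields directly
$$
V_1(\KKK_{BM}^1)=\sqrt{2\pi}\,\sqrt{\pi/2}=\pi, \qquad V_1(\KKK_{BB}^1)=\sqrt{2\pi}\,\sqrt{\pi/2}\,\log 2=\pi\log 2,
$$
$$
V_1(\LLL_{BM}^1)=\sqrt{2\pi}\,\sqrt{2/\pi}=2, \qquad V_1(\LLL_{BB}^1)=\sqrt{2\pi}\cdot\tfrac12\sqrt{\pi/2}=\tfrac{\pi}{2},
$$
which are the four asserted identities.

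The only genuinely nontrivial input is the evaluation of the two two-sided means $\E\sup_t|W(t)|$ and $\E\sup_t|X_{BB}(t)|$. Unlike the one-sided cases, these do not follow from a single application of reflection but require the full series representation for the distribution of the maximum of $|W|$ and for the Kolmogorov distribution, followed by a term-by-term integration of the tail. Reaching the clean closed forms $\sqrt{\pi/2}$ and $\sqrt{\pi/2}\,\log 2$ is where the real work lies; the appearance of $\log 2$ in the bridge case comes from summing the alternating harmonic series $\sum_{k\ge1}(-1)^{k-1}/k=\log 2$ produced by integrating the theta series. The one-sided identities and the final arithmetic are routine.
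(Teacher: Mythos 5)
Your proof is correct, and it takes a genuinely different route to the numerical inputs than the paper. Both arguments start from Theorem~\ref{theo:V_1_sobolev} with $q=\infty$ (the paper works via Theorem~\ref{prop:mean_width_sobolev} and Sudakov's formula in the equivalent form $V_1=\sqrt{\pi/2}\,\E\wid$), and your observation that $\|X_*^+\|_\infty=\sup_t X_*(t)$ because $X_*(0)=0$ matches the paper's identification of $\wid(\LLL_*^1)$ with the range of $X_*$. The divergence is in how the four expectations are evaluated: the paper identifies the laws of the suprema and ranges of $W$ and $X_{BB}$ with the Biane--Pitman--Yor distributions, writing $\wid(\KKK_{BM}^1)\eqdistr 2/\sqrt{C_1}$, $\wid(\LLL_{BM}^1)\eqdistr 2/\sqrt{C_2}$, $\wid(\KKK_{BB}^1)\eqdistr\pi\sqrt{S_1}$, $\wid(\LLL_{BB}^1)\eqdistr\frac{\pi}{2}\sqrt{S_2}$, and then imports the moment identities $\sqrt{2\pi}\,\E C_1^{-1/2}=\pi$, $\sqrt{2\pi}\,\E C_2^{-1/2}=2$, $\sqrt{2\pi}\,\E S_1^{1/2}=2\log 2$, $\sqrt{2\pi}\,\E S_2^{1/2}=2$ from Pitman--Yor (and from the ellipsoid computation~\eqref{eq:mean_width_ellipsoid_S_t}). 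You instead compute the four expected extrema directly: reflection for $\E\sup W=\sqrt{2/\pi}$, the tail $\eee^{-2x^2}$ for the bridge maximum, and the theta-series representations for $\E\sup|W|=\sqrt{\pi/2}$ and for the Kolmogorov--Smirnov mean $\sqrt{\pi/2}\,\log 2$. Your version is more self-contained and elementary; the paper's version buys the stronger \emph{distributional} identities~\eqref{eq:wid_1}--\eqref{eq:wid_4}, which feed its subsequent comparison of $\KKK_{BB}^1$, $\LLL_{BB}^1$ with the ellipsoids $E_2$, $E_4$. One small point worth a sentence in a polished write-up: the term-by-term integration of the Kolmogorov series is not a direct Fubini application (the integrated absolute series is the divergent harmonic series), but it is legitimate because for fixed $x>0$ the series $2\sum_{k\geq 1}(-1)^{k-1}\eee^{-2k^2x^2}$ is alternating with decreasing terms, so the even and odd partial sums bracket the tail probability and monotone convergence applies; this is exactly where your $\sum_{k\geq 1}(-1)^{k-1}/k=\log 2$ appears.
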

\begin{proposition}\label{prop:V1_p_infty}
%The first intrinsic volume of $\KKK_*^{\infty}$ is given by
In the case $p=\infty$ we have
\begin{align}
&V_1(\KKK_{BM}^{\infty})  = 2 V_1(\LLL_{BM}^{\infty}) = \frac 43,
&
&V_1(\KKK_{BB}^{\infty})  = 2 V_1(\LLL_{BB}^{\infty}) = \frac{\pi}{4},\label{eq:V_1_p_infty_line1}\\
&V_1(\KKK_{CBM}^{\infty}) = \frac 1{6} (2\sqrt 3 + \log(2+ \sqrt 3)),
&
&V_1(\KKK_{CBB}^{\infty}) = \frac {2}{\sqrt 3}. \label{eq:V_1_p_infty_line2}
\end{align}
\end{proposition}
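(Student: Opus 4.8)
The plan is to specialize Theorem~\ref{theo:V_1_sobolev} to $p=\infty$, for which the conjugate exponent is $q=1$, and to reduce the whole problem to one-dimensional Gaussian moments. With $q=1$ the theorem reads
\begin{align*}
V_1(\KKK_*^{\infty}) &= \sqrt{2\pi}\,\E\int_0^1 |X_*(t)|\,\dd t,
&
V_1(\LLL_*^{\infty}) &= \sqrt{2\pi}\,\E\int_0^1 \max(X_*(t),0)\,\dd t.
\end{align*}
Since each $X_*(t)$ is a centered Gaussian variable, writing $\sigma_*(t)=\sqrt{\Var X_*(t)}$ we have $\E|X_*(t)|=\sqrt{2/\pi}\,\sigma_*(t)$ and $\E\max(X_*(t),0)=\sigma_*(t)/\sqrt{2\pi}$. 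Interchanging expectation and integration by Tonelli's theorem collapses the constants and gives
$$
V_1(\KKK_*^{\infty})=2\int_0^1 \sigma_*(t)\,\dd t,
\qquad
V_1(\LLL_*^{\infty})=\int_0^1 \sigma_*(t)\,\dd t.
$$
In particular this already establishes the relations $V_1(\KKK_{BM}^{\infty})=2V_1(\LLL_{BM}^{\infty})$ and $V_1(\KKK_{BB}^{\infty})=2V_1(\LLL_{BB}^{\infty})$ in~\eqref{eq:V_1_p_infty_line1}, so it remains only to evaluate the single integral $\int_0^1\sigma_*(t)\,\dd t$ for each admissible~$*$.

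The substantive step is to read off the variance function $\sigma_*^2(t)=\Var X_*(t)$ from the definitions~\eqref{eq:X_*_def_1}--\eqref{eq:X_*_def_4}. The only inputs are the covariance $\Cov(W(s),W(u))=\min(s,u)$ together with the auxiliary identities $\Var\int_0^1 W=\tfrac13$ and $\Cov(W(t),\int_0^1 W)=t-\tfrac{t^2}{2}$, both of which follow at once from the representation $\int_0^1 W(s)\,\dd s=\int_0^1(1-u)\,\dd W(u)$. A direct expansion then gives $\sigma_{BM}^2(t)=t$, $\sigma_{BB}^2(t)=t(1-t)$, and $\sigma_{CBM}^2(t)=t^2-t+\tfrac13=(t-\tfrac12)^2+\tfrac1{12}$; an entirely analogous computation for the centered bridge shows that $\sigma_{CBB}^2(t)$ does not depend on $t$, so that the last integral will be trivial. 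The two centered cases demand the most care, since one must track the cross-covariances between $W(t)$, $W(1)$ and $\int_0^1 W$ correctly; I would organize this by subtracting the relevant integral/boundary functional first and then expanding the resulting quadratic form.

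Finally I would evaluate the four elementary integrals. The $BM$ case is immediate, $\int_0^1\sqrt t\,\dd t=\tfrac23$, giving $V_1(\KKK_{BM}^{\infty})=\tfrac43$; the $CBB$ case needs no integration once its variance is seen to be constant. For $BB$ one recognizes a Beta integral, $\int_0^1\sqrt{t(1-t)}\,\dd t=B(\tfrac32,\tfrac32)=\tfrac\pi8$, whence $V_1(\KKK_{BB}^{\infty})=\tfrac\pi4$. The only genuinely nontrivial computation is $CBM$: after the shift $u=t-\tfrac12$ one integrates $\sqrt{u^2+\tfrac1{12}}$ over $[-\tfrac12,\tfrac12]$ by means of the standard antiderivative $\int\sqrt{u^2+a^2}\,\dd u=\tfrac12 u\sqrt{u^2+a^2}+\tfrac{a^2}{2}\log\bigl(u+\sqrt{u^2+a^2}\bigr)$, which produces the logarithmic term and yields $V_1(\KKK_{CBM}^{\infty})=\tfrac16\bigl(2\sqrt3+\log(2+\sqrt3)\bigr)$. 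Multiplying each integral by the appropriate factor ($2$ for $\KKK_*^{\infty}$, $1$ for $\LLL_*^{\infty}$) then produces all the values in~\eqref{eq:V_1_p_infty_line1}--\eqref{eq:V_1_p_infty_line2}. The main obstacle is therefore not conceptual but computational, concentrated in the covariance bookkeeping for the centered processes and in the single logarithmic integral of the $CBM$ case.
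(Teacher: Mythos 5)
Your proposal follows the paper's own proof of Proposition~\ref{prop:V1_p_infty} essentially step for step: in Section~\ref{subsec:Lipschitz_V1} the paper likewise specializes Theorem~\ref{theo:V_1_sobolev} (via Theorem~\ref{prop:mean_width_sobolev}) to $q=1$, uses $\E|X_*(t)|=\sqrt{2/\pi}\,\sigma_*(t)$ and $\E X_*^+(t)=\sigma_*(t)/\sqrt{2\pi}$ to reduce everything to $V_1(\KKK_*^{\infty})=2\int_0^1\sigma_*(t)\,\dd t$ and $V_1(\LLL_*^{\infty})=\int_0^1\sigma_*(t)\,\dd t$, records the same variance functions $\sigma_{BM}^2(t)=t$, $\sigma_{BB}^2(t)=t(1-t)$, $\sigma_{CBM}^2(t)=t^2-t+\tfrac13$, $\sigma_{CBB}^2(t)=\tfrac1{12}$, and evaluates the same elementary integrals. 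Your covariance bookkeeping (via $\int_0^1 W(s)\,\dd s=\int_0^1(1-u)\,\dd W(u)$), the Beta integral $\int_0^1\sqrt{t(1-t)}\,\dd t=\pi/8$, and the antiderivative used for the CBM case are all correct, and your values for $\KKK_{BM}^{\infty}$, $\KKK_{BB}^{\infty}$, $\KKK_{CBM}^{\infty}$ and the factor-two relations to $\LLL_{BM}^{\infty}$, $\LLL_{BB}^{\infty}$ check out.

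There is, however, one genuine mismatch that you assert away rather than confront: you correctly observe that $\sigma_{CBB}^2(t)$ is constant, and the constant is $\tfrac1{12}$, so your own method yields $V_1(\KKK_{CBB}^{\infty})=2\int_0^1\tfrac{1}{2\sqrt3}\,\dd t=\tfrac{1}{\sqrt3}$, \emph{not} the value $\tfrac{2}{\sqrt3}$ displayed in~\eqref{eq:V_1_p_infty_line2}, yet you claim the computation ``produces all the values.'' The value $\tfrac{2}{\sqrt3}$ is in fact impossible: since $\KKK_{CBB}^{\infty}\subset\KKK_{BB}^{\infty}$ and $V_1$ is monotone under inclusion (immediate from Sudakov's formula~\eqref{2041}), one must have $V_1(\KKK_{CBB}^{\infty})\leq V_1(\KKK_{BB}^{\infty})=\tfrac{\pi}{4}<\tfrac{2}{\sqrt3}$. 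So the displayed value in the proposition is a typo (the paper's own Section~\ref{subsec:Lipschitz_V1}, with $\sigma_{CBB}^2\equiv\tfrac1{12}$, also gives $\tfrac{1}{\sqrt3}$), and a careful execution of your plan would have detected and flagged this discrepancy instead of silently asserting agreement with the stated constant.
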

The formula for $V_1(\KKK_{BB}^{\infty})$ was mentioned in~\cite[Example~1]{tsirelson_1}. Note  that~\eqref{eq:V_1_p_1_line2} is a special case of Theorem~\ref{theo:gao_vitale}, whereas~\eqref{eq:V_1_p_infty_line1} is a special case of Theorem~\ref{theo:V_k_Lipschitz_balls}.

\vspace*{2mm}
For $p=2$, the Sobolev balls reduce to ellipsoids in the Hilbert space with half-axes equal to either $1,\frac 12, \frac 13,\ldots$ or $1, \frac 13, \frac 15, \ldots$. These ellipsoids will be studied in Section~\ref{sec:V_1_hilbert_ellipsoids}.

%see Sections~\ref{subsec:BV_Balls_V1},  \ref{subsec:V_1_hilbert_special_cases}, \ref{subsec:Lipschitz_V1}.

\subsection{Sudakov's and Tsirelson's theorems}\label{subsec:sudakov_thm}
%Our computations are based on the connection between the intrinsic volumes and the isonormal Gaussian process.
The main tool in the proof of Theorem~\ref{theo:V_1_sobolev} is a formula due to Sudakov~\cite{vS76}.  It establishes a link between the first intrinsic volume and the supremum of the isonormal process. Recall that the \textit{isonormal process} over a separable Hilbert space $H$ is a mean zero Gaussian process $\{\xi(h)\colon h\in H\}$ having the covariance function
$$
\Cov (\xi(h), \xi(g)) = \langle h, g\rangle.
$$
\begin{theorem}[Sudakov]\label{theo:sudakov}
For every convex set $T\subset H$ it holds that
\begin{equation}\label{2041}
V_1(T)= \sqrt{2\pi}\,\E\,\sup_{h\in T} \xi(h).
\end{equation}
\end{theorem}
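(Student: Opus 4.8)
The plan is to reduce to finite dimensions, where the isonormal process is literally a Gaussian linear functional and $V_1$ is the mean width, and then to recover the general case from the supremum defining $V_1(T)$ by monotone convergence. Both sides of \eqref{2041} are invariant under translations of the set (the right-hand side because $\xi$ has mean zero), so I begin with a finite-dimensional convex body $T'$ which, after translating, linearly spans an $n$-dimensional subspace $L\subset H$. Fixing an orthonormal basis $e_1,\dots,e_n$ of $L$, the defining covariance shows that $\xi(e_1),\dots,\xi(e_n)$ are i.i.d.\ standard Gaussians, whence $\xi(h)=\langle Z,h\rangle$ for $h\in L$ with $Z=\sum_i\xi(e_i)e_i$ a standard Gaussian vector in $L\cong\R^n$. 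Therefore $\sup_{h\in T'}\xi(h)=h_{T'}(Z)$ is the support function of $T'$ evaluated at $Z$. Writing $Z=RU$ with $R=|Z|$ and $U=Z/|Z|$ uniform on $\mathbb{S}^{n-1}$ and independent of $R$, and using the positive homogeneity of $h_{T'}$, I obtain $\E\sup_{h\in T'}\xi(h)=\E R\cdot\E\,h_{T'}(U)$.

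I would then identify the two factors. Since the projection of $T'$ onto the line through $U$ has length $h_{T'}(U)+h_{T'}(-U)$ and $U,-U$ are equidistributed, the spherical mean satisfies $\E\,h_{T'}(U)=\tfrac12\E W_1$, where $W_1$ is the length of the projection of $T'$ onto a uniform random line. Kubota's formula \eqref{eq:kubota} with $m=1$ and $\kappa_1=2$ gives $V_1(T')=n\kappa_n(2\kappa_{n-1})^{-1}\E W_1$, so the finite-dimensional identity reduces to the numerical fact $\sqrt{2\pi}\,\E R=n\kappa_n/\kappa_{n-1}$. Inserting $\E R=\sqrt2\,\Gamma(\tfrac{n+1}2)/\Gamma(\tfrac n2)$ and the explicit value of $\kappa_n$, both sides equal $2\sqrt\pi\,\Gamma(\tfrac{n+1}2)/\Gamma(\tfrac n2)$, which proves $V_1(T')=\sqrt{2\pi}\,\E\sup_{h\in T'}\xi(h)$ for every finite-dimensional convex $T'$.

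It remains to pass to an arbitrary convex $T\subset H$. As $H$ is separable I would fix a countable dense subset $\{h_1,h_2,\dots\}$ of $T$ and set $\sup_{h\in T}\xi(h):=\sup_i\xi(h_i)$ (allowing the value $+\infty$). Because $\E(\xi(h)-\xi(h'))^2=|h-h'|^2$, the map $h\mapsto\xi(h)$ is $L^2$-continuous, so if $h_n\to g$ then $\xi(h_n)\to\xi(g)$ along a subsequence almost surely; this shows the definition is independent of the chosen dense set and that $\sup_{h\in T'}\xi(h)\le\sup_{h\in T}\xi(h)$ a.s.\ whenever $T'\subset T$. For a finite set $F$ one has $\max_{h\in F}\xi(h)=\sup_{h\in\conv F}\xi(h)$ by linearity, and $\conv F$ is a finite-dimensional convex subset of $T$; letting $F$ run through the initial segments of the dense sequence and applying monotone convergence yields $\sqrt{2\pi}\,\E\sup_{h\in T}\xi(h)=\sup_F V_1(\conv F)\le V_1(T)$. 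Conversely, the finite-dimensional case applied to each finite-dimensional $T'\subset T$, together with the almost sure domination above, gives $V_1(T')\le\sqrt{2\pi}\,\E\sup_{h\in T}\xi(h)$; taking the supremum over $T'$ produces the reverse inequality and hence equality.

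The main obstacle I anticipate is not any single computation but the infinite-dimensional passage: one must make sense of $\sup_{h\in T}\xi(h)$ as a measurable, possibly infinite random variable, and justify interchanging this supremum both with the expectation and with the supremum over finite-dimensional approximants. The $L^2$-continuity of the isonormal process and the monotone convergence theorem are precisely what underwrite the well-definedness and the interchange, and they let the formula hold with both sides simultaneously finite or infinite.
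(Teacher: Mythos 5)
Your proof is correct and its finite-dimensional core is essentially the paper's own argument (recalled in its ``Sudakov's argument'' sketch): decompose the standard Gaussian vector as $Z=RU$ with the chi-distributed radius $R$ independent of the uniform direction $U$, use homogeneity of the support function to factor $\E\sup_{h\in T'}\xi(h)=\E R\cdot \E h_{T'}(U)$, and match constants against Kubota's formula, where your identity $\sqrt{2\pi}\,\E R=n\kappa_n/\kappa_{n-1}=2\sqrt{\pi}\,\Gamma(\frac{n+1}{2})/\Gamma(\frac n2)$ checks out. The only difference is that you additionally carry out the passage from finite-dimensional subsets to general convex $T\subset H$ via separability, $L^2$-continuity of $h\mapsto\xi(h)$, and monotone convergence, a step the paper leaves to the cited reference; this addition is sound (the domination $\sup_{T'}\xi\le\sup_T\xi$ a.s.\ and the integrable lower bound $\xi(h_1)$ make both interchanges legitimate).
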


Tsirelson~\cite{tsirelson2} generalized the previous theorem to all intrinsic volumes as follows. Consider $k$ independent copies  $\{\xi_i(h)\colon h\in H\}$, $1\leq i \leq k$, of the isonormal process. The $k$-dimensional spectrum of a compact convex set $T\subset H$ is a random set
$$
\spec_k T = \{(\xi_1(h), \ldots, \xi_k(h))\colon h\in T\}\subset \R^k.
$$
Recall that the set $T$ is called a GB-set if there is a version of the isonormal process over $T$ which has bounded sample paths. It is known that the GB-property is equivalent to $V_1(T)<\infty$ in which case we also have $V_k(T)<\infty$ for all $k\in\N$; see~\cite{sC76}.
%%Consider a multidimensional version of $\xi_t$:
%%$$
%%\bar{\xi}_t=(\xi_t^{(1)},\dots,\xi_t^{(k)})\in\R^k,\quad t\in T,
%%$$
%%where $\xi_t^{(1)},\dots,\xi_t^{(k)}$ are independent copies of $\xi_t$. Let $\mu_k\subset\R^k$ be a convex %%hull of the process $\bar{\xi}_t$:
%%$$
%%\mu_k=\conv\{\bar{\xi}_t\in\R^k \,:\,t\in T\}.
%%$$
\begin{theorem}[Tsirelson]\label{theo:tsirelson_spectrum}
For every $k\in\N$ and every compact convex $GB$-set $T\subset H$ it holds that
\begin{equation}\label{2042}
V_k(T)= \frac{(2\pi)^{k/2}}{k!\kappa_k} \E\,\Vol_k(\spec_k T).
\end{equation}
\end{theorem}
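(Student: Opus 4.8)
The plan is to reduce to finitely many dimensions and then identify the $k$-dimensional spectrum with the image of $T$ under a Gaussian random matrix, whose expected volume can be computed through Kubota's formula \eqref{eq:kubota}.

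First I would pass to finite dimensions. By the definition of $V_k$ as $\sup_{T'} V_k(T')$ over finite-dimensional convex $T'\subseteq T$, and since each $\xi_i$ is (on the bounded-paths version guaranteed by the GB-property) linear, the map $\Phi=(\xi_1,\dots,\xi_k)\colon H\to\R^k$ is a.s.\ linear, so $\spec_k T=\Phi(T)$ is a.s.\ compact convex and $\spec_k T'\subseteq\spec_k T$ whenever $T'\subseteq T$. Choosing an increasing sequence of finite-dimensional convex $T_m$ with $\overline{\bigcup_m T_m}=T$, monotone convergence together with continuity of the paths of $\Phi$ on the compact set $T$ should give $\E\Vol_k(\spec_k T_m)\uparrow\E\Vol_k(\spec_k T)$, matching $V_k(T_m)\uparrow V_k(T)$. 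Hence it suffices to prove \eqref{2042} for finite-dimensional $T$. If $\dim T<k$ both sides vanish, so assume $T$ lies in an $n$-dimensional subspace $L$ with $n\ge k$.

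Next, fix an orthonormal basis $e_1,\dots,e_n$ of $L$ and write $h=\sum_j c_j e_j$. Then $\xi_i(h)=\sum_j c_j\,\xi_i(e_j)$, and the entries $A_{ij}=\xi_i(e_j)$ are i.i.d.\ $N(0,1)$, because the $\xi_i$ are independent copies of the isonormal process and the $e_j$ are orthonormal. Identifying $T$ with its coordinate image $\tilde T\subset\R^n$, we obtain $\spec_k T=A\tilde T$ for a $k\times n$ matrix $A$ with i.i.d.\ standard Gaussian entries. I would then decompose this random projection. Let $a_1,\dots,a_k\in\R^n$ be the rows of $A$ and $E=\Span(a_1,\dots,a_k)$ their (a.s.\ $k$-dimensional) span. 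Since $(Ac)_i=\langle a_i,c\rangle=\langle a_i,P_E c\rangle$, the map $c\mapsto Ac$ factors as the orthogonal projection $P_E$ followed by the isomorphism $A|_E\colon E\to\R^k$, whence
\[
\Vol_k(A\tilde T)=\bigl|\det(A|_E)\bigr|\cdot\Vol_k(P_E\tilde T),
\]
the determinant being computed with respect to orthonormal bases. Here $|\det(A|_E)|$ is the $k$-volume of the parallelepiped spanned by $a_1,\dots,a_k$. By the $O(n)$-invariance of the law of $A$, the subspace $E$ is uniformly distributed on the Grassmannian and is independent of this parallelepiped volume; a Gram--Schmidt computation then gives $\E|\det(A|_E)|=\prod_{i=1}^k \E\chi_{n-i+1}=2^{k/2}\,\Gamma(\tfrac{n+1}{2})/\Gamma(\tfrac{n-k+1}{2})$.

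Finally I would invoke Kubota's formula. As $E$ is a uniform $k$-dimensional subspace, $\E\Vol_k(P_E\tilde T)=\E W_k$, so by \eqref{eq:kubota},
\[
\E\Vol_k(\spec_k T)=2^{k/2}\frac{\Gamma(\frac{n+1}{2})}{\Gamma(\frac{n-k+1}{2})}\cdot\frac{\kappa_k\kappa_{n-k}}{\binom nk\kappa_n}\,V_k(T).
\]
It remains to check that the prefactor equals $k!\kappa_k/(2\pi)^{k/2}$: inserting $\kappa_m=\pi^{m/2}/\Gamma(\frac m2+1)$ and applying the Legendre duplication formula $\Gamma(\tfrac{m+1}{2})\Gamma(\tfrac m2+1)=2^{-m}\sqrt\pi\,m!$ for $m=n$ and $m=n-k$, all dependence on $n$ cancels and one is left exactly with the Tsirelson constant $(2\pi)^{k/2}/(k!\kappa_k)$, which is \eqref{2042}. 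I expect the genuine obstacle to be the infinite-dimensional limiting step of the first paragraph --- establishing that $\E\Vol_k(\spec_k T)$ is approximated from inside by its finite-dimensional sections, which requires care about continuity and measurability of the spectrum and its volume --- whereas the finite-dimensional identity is an essentially algebraic matter once the Gaussian-matrix decomposition and the independence of $E$ from the parallelepiped volume are established.
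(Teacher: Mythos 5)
Note first that the paper does not prove Theorem~\ref{theo:tsirelson_spectrum}: it is quoted from Tsirelson~\cite{tsirelson2}, so there is no in-paper proof to compare against. The closest in-paper argument is the proof of Proposition~\ref{prop:mean_width_ellipsoid_general}, which uses exactly your random-matrix factorization (row span $E$, the factor $\sqrt{\det(AA^T)}$) in the special case of ellipsoids. Your finite-dimensional half is correct and is the standard route: the identity $\Vol_k(A\tilde T)=\sqrt{\det(AA^T)}\,\Vol_k(P_E\tilde T)$ holds since $AA^T$ is the Gram matrix of the rows; the independence of $E$ from $\sqrt{\det(AA^T)}$ follows from the decomposition $A=(AA^T)^{1/2}U$ with $U$ uniform on the Stiefel manifold and independent of $AA^T$; the telescoping product $\prod_{i=1}^k\E\chi_{n-i+1}=2^{k/2}\Gamma(\frac{n+1}{2})/\Gamma(\frac{n-k+1}{2})$ is the Bartlett decomposition; and I checked that after Legendre duplication the prefactor collapses to $k!\kappa_k/(2\pi)^{k/2}$ independently of $n$, as you claim. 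In effect you prove Tsirelson's formula as a Gaussian-weighted average of Kubota's formula~\eqref{eq:kubota}, which is the right way to see why the constant is dimension-free.

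The genuine gap is where you suspected it, but the specific mechanism you propose for closing it does not work at the stated level of generality. You invoke ``continuity of the paths of $\Phi=(\xi_1,\dots,\xi_k)$ on the compact set $T$'' to get $\overline{\bigcup_m \spec_k T_m}=\spec_k T$ a.s.; but that continuity is precisely the GC-property, and the theorem is asserted for every compact convex \emph{GB}-set, where GB does not imply GC. For a GB-but-not-GC set there is no version of the isonormal process with continuous paths on $T$, and likewise no everywhere-defined a.s.\ linear version of $\Phi$ on $H$ (linearity holds a.s.\ only for each fixed finite collection of vectors, which suffices on your finite-dimensional $T_m$ but not uniformly over $T$). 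Consequently $\spec_k T$ is not simply $\Phi(T)$ for a nice map, the a.s.\ identity $\overline{\bigcup_m \spec_k T_m}=\spec_k T$ is unjustified, and even the measurability of $\Vol_k(\spec_k T)$ needs an argument. This is exactly the issue the paper's remark on separability flags: one must work with the \emph{natural modification} of $\xi_1,\dots,\xi_k$ (see~\cite[Proposition~2.6.4]{bogachev_book}), whose defining closure property is the substitute for path continuity and is what makes the exhaustion step legitimate. If you restrict the statement to GC-sets — which covers every application in this paper, since $\KKK_*^p$ and $\LLL_*^p$ are shown to be GC in Lemma~\ref{lem:continuity} — your argument is complete as written; for the full GB statement the limiting step must be rebuilt on the natural-modification machinery rather than on continuity.
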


%The aim of this note is to show how Theorem~\ref{theo:sudakov} and Theorem~\ref{theo:tsirelson_spectrum} can be used as a tool to compute the intrinsic volumes of sets. In Section~\ref{sec:fdim_sets} we consider applications to convex polytopes in $\R^n$ including regular simplices and crosspolytopes. In Section~\ref{sec:infdim_sets} we consider applications to infinite-dimensional bodies in the Hilbert space including various ellipsoids defined in terms of the Sobolev norm.

\begin{remark} To see that Theorem~\ref{theo:tsirelson_spectrum} generalizes Theorem~\ref{theo:sudakov} note that in the case when $k=1$ the spectrum $\spec_1 T$ is just the range
%$[\inf_{h\in T} \xi(h), \sup_{h\in T} \xi(h)]$
of the process $\{\xi(h)\colon h\in T\}$ and Theorem~\ref{theo:tsirelson_spectrum} states that
$$
V_1(T)=\sqrt{\frac\pi 2}\, \E \left(\sup_{h\in T}\xi(h)-\inf_{h\in T}\xi(h)\right).
$$
However, since the processes  $\xi$ and $-\xi$ have the same distribution, it holds that
$
\E\inf_{h\in T}\xi(h) = - \E\sup_{h\in T}\xi(h)
$
and we recover~\eqref{2041}.
\end{remark}
\begin{remark}[On separability]
In order to have a well-defined supremum in~\eqref{2041}, it is tacitly assumed in Theorem~\ref{theo:sudakov} that we are dealing with the separable modification of $\xi$; see Proposition~2.6.1 in~\cite{bogachev_book} for the proof of its existence. In Theorem~\ref{theo:tsirelson_spectrum} a separable modification is not sufficient and one tacitly assumes that one is dealing with the so-called natural modifications of $\xi_1,\ldots,\xi_k$;  see~\cite[Proposition~2.6.4]{bogachev_book} for the proof of their existence in the case of a GB-set $T$. We will have no problems with separability since the sets $\KKK_*^p$ and $\LLL_*^p$ have the GC-property (meaning that the isonormal process has a version with continuous sample paths over these sets); see Lemma~\ref{lem:continuity}.
%The natural modification assures, in particular, that the map $h\mapsto (\xi_1(h), \ldots, \xi_k(h))$ becomes linear on the linear hull of $T$ after we exclude a set of measure $0$ from the probability space on which the isonormal process is defined.  In particular, the natural modification guarantees that $\spec_k T$ is a bounded convex set a.s.
\end{remark}

%Tsirelson~\cite{bT85} also obtained the infinite dimensional counterpart of Steiner's formula.
%\begin{theorem}[Tsirelson]
%It holds
%\begin{equation}\label{1417}
%\E\exp\left(\sup_{t\in %T}\left[r\xi_t-\frac{r^2}2\E\xi_t^2\right]\right)=\sum_{n=0}^\infty\left(\frac{r}{\sqrt{2\pi}}\right)V_n(T).
%\end{equation}
%\end{theorem}

%\section{Applications to infinite dimensional convex bodies}\label{sec:infdim_sets}

%\section{Mean width of dispersion ellipsoids and $L^2$-norm of Gaussian processes}
%The Hilbert space $l_2$ consists of all sequences  $(x_1,x_2,\ldots)$ such that $\sum_{n=1}^{\infty} x_n^2 <\infty$.
%\section{The first intrinsic volume of Sobolev balls}\label{sec:V_1_sobolev_balls}

%\subsection{Arbitrary intrinsic volumes of $\LLL_{BB}^1$  and $\LLL_{BM}^1$ and applications to Brownian convex hulls}\label{subsec:gao_vitale_eldan}
\subsection{Applications to Brownian convex hulls}\label{subsec:gao_vitale_eldan}
Combining Tsirelson's Theorem~\ref{theo:tsirelson_spectrum} with the results of Section~\ref{subsec:main_results} it is possible to obtain interesting probabilistic consequences. The main idea here is that the convex hull of a Brownian motion in $\R^k$ can be viewed as a projection of the convex hull of the Wiener spiral onto a ``uniformly chosen'' random $k$-dimensional linear subspace of $L^2$. A precise formulation of this statement is given by Tsirelson's Theorem~\ref{theo:tsirelson_spectrum} (which can be seen as an infinite-dimensional analogue of the Kubota's formula~\eqref{eq:kubota}). This allows to establish a connection between the $k$-th intrinsic volume of the convex hull of the Wiener spiral and the expected volume of the convex hull of a Brownian motion in $\R^k$.

Let $\{W(t)\colon t\geq 0\}$ be a standard Brownian motion.  The isonormal process $\{\xi(f)\colon f\in L^2\}$ is given by
$$
\left\{\xi(f)\colon f\in L^2 \right\} \eqfdd \left\{\int_0^1 f(t)\dd W(t)\colon f\in L^2\right\},
$$
where the stochastic integral is in the usual It\^{o} sense.
Let
$$
\{X_{BM}^{(k)}(t)=(W_1(t),\ldots,W_k(t))\colon t\geq 0\}
$$
be the standard $\R^k$-valued Brownian motion whose components $W_1(t),\ldots,W_k(t)$ are independent copies of $W(t)$.  Using the isometry between $\LLL_{BM}^{1}$ and the closed convex hull of the Wiener spiral,  it is easy to see that the spectrum $\spec_k(\LLL_{BM}^{1})$ has the same distribution as the closed convex hull of the $k$-dimensional Brownian path $\{X_{BM}^{(k)}\colon t\in [0,1]\}$. Combining Theorem~\ref{theo:gao_vitale} with Theorem~\ref{theo:tsirelson_spectrum} we obtain that
\begin{equation}\label{eq:E_Vol_conv_hull_BM}
%\E \Vol_k(\spec_k(\LLL_{BM}^{\infty}))
\E \Vol_k(\conv\{X_{BM}^{(k)}(t)\colon 0\leq t\leq 1\})
= \frac{\kappa_k^2}{(2\pi)^{k/2}}.
\end{equation}
Here, $\conv A$ denotes the convex hull of a set $A$.  Using Kubota's formula~\eqref{eq:kubota} and the fact that the $m$-dimensional projection of a $k$-dimensional Brownian motion is an $m$-dimensional Brownian motion, we obtain a generalization of~\eqref{eq:E_Vol_conv_hull_BM} to arbitrary intrinsic volumes. Namely, for all $0\leq m\leq k$,
\begin{equation}\label{eq:E_V_m_conv_hull_BM}
\E V_m(\conv\{X_{BM}^{(k)}(t)\colon 0\leq t\leq 1\})
= \frac 1 {(2\pi)^{m/2}} \binom km \frac{\kappa_k\kappa_m}{\kappa_{k-m}}.
\end{equation}
\citet{rE12} obtained~\eqref{eq:E_Vol_conv_hull_BM} and~\eqref{eq:E_V_m_conv_hull_BM} independently, but it seems that the equivalence of his result to Theorem~\ref{theo:gao_vitale} remained unnoticed. For $m=1,2$, the result~\eqref{eq:E_V_m_conv_hull_BM} is contained in~\cite[Cor.~1.4, Prop.~1.6]{kampf_etal}, see also~\cite{biane_letac} and~\cite[Ch.~4.3, 4.4]{kampf_phd}.

Similarly, it is easy to see that the spectrum $\spec_k(\LLL_{BB}^{1})$ has the same distribution as the closed convex hull of a standard $k$-dimensional Brownian bridge $\{X_{BB}^{(k)}(t) \colon t\in [0,1]\}$. Combining Theorem~\ref{theo:V_k_Lipschitz_balls} with Theorem~\ref{theo:tsirelson_spectrum} we obtain that
%the expected volume of the convex hull of the standard $\R^k$-valued Brownian bridge on $[0,1]$:
\begin{equation}\label{eq:E_Vol_conv_hull_BB}
%\E\,\Vol_k(\spec_k(\LLL_{BB}^{\infty}))
\E \Vol_k(\conv\{X_{BB}^{(k)}(t)\colon 0\leq t\leq 1\})
=\frac{\kappa_k\kappa_{k+1}}{2(2\pi)^{k/2}}.
\end{equation}
Using Kubota's formula~\eqref{eq:kubota} and the fact that an orthogonal projection of $X_{BB}$ is again a Brownian bridge, we obtain that for all $0\leq m\leq k$,
\begin{equation}\label{eq:E_V_m_conv_hull_BB}
\E V_m(\conv\{X_{BB}^{(k)}(t)\colon 0\leq t\leq 1\})
= \frac 1 {2(2\pi)^{m/2}} \binom km \frac{\kappa_k\kappa_{m+1}}{\kappa_{k-m}}.
\end{equation}
\citet{RMC09} and~\citet{RMC10} obtained~\eqref{eq:E_Vol_conv_hull_BB} and~\eqref{eq:E_V_m_conv_hull_BB} for $k=2$.

Let us also mention  discrete versions of the above results. Consider the following points in $\R^n$:
$$
%P_0=(0,0,\ldots,0), \;\; P_1=(1,0,\ldots,0), \;\; P_2=(1,1,\ldots,0),\;\; \ldots\;\; P_n=(1,1,\ldots,1).
P_i= (\underbrace{0,\ldots,0}_{n-i},\underbrace{1,\ldots,1}_{i}),
\;\;\;
P_i^* =  P_i - \left(\frac in, \ldots, \frac in\right),\;\;\;
0\leq i\leq n.
$$
Denote by $\TTT_{BM,n}$ the convex hull of $P_0,\ldots,P_n$ and by $\TTT_{BB,n}$ the convex hull of $P_0^*,\ldots,P_n^*$. The sets $\TTT_{BM,n}$ and $\TTT_{BB,n}$ are simplices and can be seen as the discrete analogues of $\LLL^{1}_{BM}$ and $\LLL^1_{BB}$. The next theorem is due to~\citet{GV01} and~\citet{fG03}.
\begin{theorem}\label{theo:gao_vitale_discrete}
For every $k=1,\ldots,n$ it holds that
\begin{align}%\label{eq:intrinsic_discr_simpl}
V_k(\TTT_{BM,n}) &= \frac{1}{k!} \sum_{A_{n,k}} \frac{1}{\sqrt{d_1\ldots d_k}}, \\
V_k(\TTT_{BB,n}) &= \frac{1}{k!} \sum_{A_{n,k}} \sqrt\frac{n-(d_1+\ldots+d_k)}{n d_1\ldots d_k},
\end{align}
where $A_{n,k}$ is the set of all $(d_1,\ldots,d_k)\in \N^k$ with $d_1+\ldots+d_k\leq n$.
\end{theorem}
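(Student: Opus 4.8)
The plan is to realise each of these simplices as the $k$-dimensional spectrum of a Gaussian random walk and to combine Tsirelson's Theorem~\ref{theo:tsirelson_spectrum} with a multidimensional version of Spitzer's combinatorial identity. For a convex body $T\subset\R^n$ the isonormal process is $\xi(h)=\langle\mathbf g,h\rangle$ with $\mathbf g$ standard Gaussian, so $\xi(P_i)=g_{n-i+1}+\dots+g_n=:S_i$ is a standard Gaussian random walk started at $S_0=0$. Taking $k$ independent copies and using that a linear image of a convex hull is the convex hull of the images, I would identify $\spec_k\TTT_{BM,n}=\conv\{\mathbf S_0,\dots,\mathbf S_n\}$, the convex hull of a $k$-dimensional Gaussian random walk, whence Theorem~\ref{theo:tsirelson_spectrum} gives
\[
V_k(\TTT_{BM,n})=\frac{(2\pi)^{k/2}}{k!\,\kappa_k}\,\E\Vol_k\bigl(\conv\{\mathbf S_0,\dots,\mathbf S_n\}\bigr).
\]
For $\TTT_{BB,n}$ the same computation with $P_i^*$ produces $\mathbf S_i^*=\mathbf S_i-\tfrac{i}{n}\mathbf S_n$, a $k$-dimensional Gaussian bridge; since $P_0^*=P_n^*=0$, the set $\TTT_{BB,n}$ is an $(n-1)$-simplex and the bridge increments are cyclically exchangeable, which will be the source of the extra symmetry. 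When $k=1$ this reduces, via Theorem~\ref{theo:sudakov} and $\xi\eqdistr-\xi$, to $V_1=\sqrt{2\pi}\,\E\max_i S_i$, and Spitzer's formula $\E\max_i S_i=\sum_{d=1}^n\frac1d\,\E S_d^+$ with $\E S_d^+=\sqrt{d/(2\pi)}$ returns $\sum_{d=1}^n d^{-1/2}$; this special case guides the general argument.

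The deterministic geometric input is a clean evaluation of the face volumes. A $k$-face of $\TTT_{BM,n}$ is $F=\conv\{P_{i_0},\dots,P_{i_k}\}$ with $0\le i_0<\dots<i_k\le n$; writing $a_j=i_j-i_0$ and $d_j=i_j-i_{j-1}$, the edge vectors $P_{i_j}-P_{i_0}$ have Gram matrix $[\min(a_j,a_l)]$, whose determinant is the Brownian covariance determinant $d_1\cdots d_k$, so $\Vol_k(F)=\frac1{k!}\sqrt{d_1\cdots d_k}$ with $(d_1,\dots,d_k)\in A_{n,k}$. For $\TTT_{BB,n}$ the Gram matrix becomes the Brownian bridge covariance $[\min(a_j,a_l)-a_ja_l/n]$, with determinant $\tfrac{1}{n}\,d_1\cdots d_k\,(n-(d_1+\dots+d_k))$, so $\Vol_k(F^*)=\frac1{k!}\sqrt{(n-\sum d_j)\,d_1\cdots d_k/n}$. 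This already isolates the factors $\sqrt{d_1\cdots d_k}$ and $\sqrt{(n-\sum d_j)/n}$; in the probabilistic picture the corresponding expected face volume is $\frac{m_k}{k!}$ times the same expression, where $m_k=\E\,|\det(\mathbf Z_1,\dots,\mathbf Z_k)|$ for independent standard Gaussian $\mathbf Z_j\in\R^k$.

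It then remains to supply the combinatorial weights $1/(d_1\cdots d_k)$, which I would obtain from the multidimensional Spitzer-type identity
\[
\E\Vol_k\bigl(\conv\{\mathbf S_0,\dots,\mathbf S_n\}\bigr)=\frac1{k!}\sum_{(d_1,\dots,d_k)\in A_{n,k}}\frac{\E\,\bigl|\det(\mathbf Y_1,\dots,\mathbf Y_k)\bigr|}{d_1\cdots d_k},\qquad \mathbf Y_j\sim N(0,d_jI_k)\ \text{independent}.
\]
Since $\E|\det(\mathbf Y_1,\dots,\mathbf Y_k)|=\sqrt{d_1\cdots d_k}\,m_k$, the right-hand side collapses to $\frac{m_k}{k!}\sum_{A_{n,k}}(d_1\cdots d_k)^{-1/2}$, and the same scheme applied to the bridge increments inserts the factor $\sqrt{(n-\sum d_j)/n}$. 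Feeding this into the displayed Tsirelson formula and using the Gaussian-simplex evaluation $m_k=k!\,\kappa_k/(2\pi)^{k/2}$ (which follows from the base-times-height recursion $\E|\det(\mathbf Z_1,\dots,\mathbf Z_k)|=\prod_{j=1}^k\E\,\mathrm{dist}(\mathbf Z_j,\Span(\mathbf Z_1,\dots,\mathbf Z_{j-1}))$ and telescoping the resulting $\chi$-moments), the prefactor $\frac{(2\pi)^{k/2}}{k!\kappa_k}\,m_k$ equals $1$ and both formulas follow.

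I expect the identity for $\E\Vol_k(\conv\{\mathbf S_i\})$ to be the main obstacle. Its dual deterministic form is the assertion that, for fixed gaps $(d_1,\dots,d_k)$, the external angles of $\TTT_{BM,n}$ at the faces with these gaps sum over the admissible starting indices $i_0$ to exactly $1/(d_1\cdots d_k)$. Establishing this requires promoting the Sparre Andersen / cyclic-exchangeability combinatorics behind Spitzer's one-dimensional formula from the range functional to the $k$-dimensional volume functional, exploiting the multilinearity of the determinant to decouple the $k$ independent copies of the walk; the bridge case uses cyclic rather than linear exchangeability. The separability and GB technicalities attached to Tsirelson's theorem are vacuous here since everything takes place in $\R^n$.
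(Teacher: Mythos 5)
Your reduction via Tsirelson's Theorem~\ref{theo:tsirelson_spectrum}, the identification of $\spec_k(\TTT_{BM,n})$ and $\spec_k(\TTT_{BB,n})$ with the hulls of the Gaussian walk and bridge, the Gram-matrix evaluations of the face volumes (determinants $d_1\cdots d_k$ and $d_1\cdots d_k(n-\sum_j d_j)/n$), and the constant bookkeeping ($m_k=k!\kappa_k/(2\pi)^{k/2}$, so the Tsirelson prefactor cancels, as your $k=1$ Spitzer check confirms) are all correct. But the proof has a genuine gap exactly where you suspect it: the multidimensional Spitzer-type identity
\[
\E\Vol_k\bigl(\conv\{\mathbf S_0,\dots,\mathbf S_n\}\bigr)=\frac1{k!}\sum_{(d_1,\dots,d_k)\in A_{n,k}}\frac{\E\,\bigl|\det(\mathbf Y_1,\dots,\mathbf Y_k)\bigr|}{d_1\cdots d_k}
\]
is asserted, not proved, and it is not a routine promotion of Sparre Andersen. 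Spitzer--Widom handle $V_1$ because the perimeter is an \emph{integral over directions of one-dimensional ranges}, so cyclic exchangeability can be applied direction by direction; the $k$-dimensional volume admits no such linear decomposition. Your suggested mechanism --- ``exploiting the multilinearity of the determinant to decouple the $k$ independent copies'' --- does not work as stated: the object is a single walk in $\R^k$ (the $k$ independent isonormal copies are already consumed in forming the coordinates of $\mathbf S_i$), the volume of the hull only becomes a sum of determinants after a triangulation that itself depends on the realization of the walk, and the cyclic time-shifts in any Sparre-Andersen-type argument act simultaneously on all $k$ coordinates, so multilinearity in the columns gives no decoupling of the combinatorics. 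Establishing this identity (for walks and, with your factor $\sqrt{(n-\sum_j d_j)/n}$, for bridges) is a substantial theorem in its own right; it was proved only later, by Vysotsky and Zaporozhets, by a delicate simplicial decomposition combined with joint cyclic shuffling, and at the level of detail given here it cannot be treated as available.

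Note also how this sits relative to the paper: the paper does not prove Theorem~\ref{theo:gao_vitale_discrete} at all --- it cites \cite{GV01} and \cite{fG03} --- and then runs your implication in the \emph{opposite} direction, combining the theorem with Theorem~\ref{theo:tsirelson_spectrum} to \emph{deduce} the expected-volume formulas for $\spec_k(\TTT_{BM,n})$ and $\spec_k(\TTT_{BB,n})$, i.e.\ precisely the identity you want to use as input. So within the logic of this paper your argument is circular, and the same circularity infects the ``dual deterministic form'' you mention: the statement that the external angles of $\TTT_{BM,n}$ at all $k$-faces with fixed gap vector $(d_1,\dots,d_k)$ sum to $1/(d_1\cdots d_k)$ is equivalent to the theorem (given your face-volume computation) and is exactly the hard content of Gao and Vitale's direct proof, which you likewise leave unestablished. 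To turn your sketch into a proof you must supply an independent argument for one of these two equivalent identities; everything surrounding that missing core is sound.
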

The spectrum $\spec_k(\TTT_{BM,n})$ is the convex hull of an $n$-step Gaussian random walk in $\R^k$. Similarly, $\spec_k(\TTT_{BB,n})$ is the convex hull of an $n$-step Gaussian random walk in $\R^k$ conditioned to return to $0$. Namely,
\begin{align*}
\spec_k(\TTT_{BM,n}) & \eqdistr \conv\{0, X_{BM}^{(k)}(1), \ldots, X_{BM}^{(k)}(n)\},\\
\spec_k(\TTT_{BB,n}) & \eqdistr \conv\{0, X_{BM}^{(k)}(1), \ldots, X_{BM}^{(k)}(n)\} | \{X_{BM}^{(k)}(n)=0\}.
\end{align*}
Tsirelson's Theorem~\ref{theo:tsirelson_spectrum} combined with Theorem~\ref{theo:gao_vitale_discrete} yields the formulae for the expected volumes of these convex hulls:
\begin{align}
\E \Vol_k (\spec_k(\TTT_{BM,n})) &= \frac{\kappa_k}{(2\pi)^{k/2}} \sum_{A_{n,k}} \frac{1}{\sqrt{d_1\ldots d_k}},\\
\E \Vol_k (\spec_k(\TTT_{BB,n})) &= \frac{\kappa_k}{(2\pi)^{k/2}} \sum_{A_{n,k}} \sqrt \frac{n-(d_1+\ldots+d_k)}{n d_1\ldots d_k}.
\end{align}
Kubota's formula~\eqref{eq:kubota} allows to generalize these formulae to arbitrary intrinsic volumes. We obtain that for every $1\leq m\leq k$,
\begin{align}
\E V_m (\spec_k(\TTT_{BM,n})) &= \frac 1 {(2\pi)^{m/2}} \binom{k}{m} \frac{\kappa_k}{\kappa_{k-m}} \sum_{A_{n,m}} \frac{1}{\sqrt{d_1\ldots d_m}},\label{eq:conv_hull_rand_walk_kub_BM}\\
\E V_m (\spec_k(\TTT_{BB,n})) &= \frac 1 {(2\pi)^{m/2}} \binom{k}{m} \frac{\kappa_k}{\kappa_{k-m}} \sum_{A_{n,m}} \sqrt \frac{n-(d_1+\ldots+d_m)}{n d_1\ldots d_m}. \label{eq:conv_hull_rand_walk_kub_BB}
\end{align}
For the semiperimeter $V_1$ of the convex hull of a general (not necessarily Gaussian) two-dimensional random walk $S_1,S_2,\ldots$, \citet{spitzer_widom} and~\citet{baxter} obtained the formula
$$
\E V_1(\conv\{0,S_1,\ldots,S_n\}) = \sum_{j=1}^n \frac 1j \E \|S_j\|_2.
$$
In the Gaussian case, the right-hand side of this formula is $\sqrt{\frac{\pi}2}\sum_{j=1}^n \frac{1}{\sqrt j}$. This is equivalent to~\eqref{eq:conv_hull_rand_walk_kub_BM} with $k=2$, $m=1$.

\subsection{Applications to Brownian zonoids}\label{subsec:brownian_zonoids}
The spectrum of $\LLL_{BM}^{\infty}$ is given by the zonoid spanned by the $k$-dimensional Brownian motion $\{X_{BM}^{(k)}\colon t\in [0,1]\}$:
$$
\spec_k (\LLL_{BM}^{\infty})
%= \zon \{X_{BM}^{(k)}\colon 0\leq t \leq 1\}
\eqdistr
\left\{\int_{0}^1 X_{BM}^{(k)}(t) g(t) \dd t\colon g\in L^{\infty}[0,1], 0\leq g\leq 1\right\}.
$$
This follows from  a stochastic integral representation of the isonormal process, see Lemma~\ref{lem:isonormal_alternative_rep} below, by noting that any $f\in \LLL_{BM}^{\infty}$ can be represented as $f(t)=\int_0^t g(s) \dd s$ for some measurable function $0\leq g\leq 1$.  This random set can be interpreted as the Aumann integral of the (random) set-valued function mapping $t\in [0,1]$ to the segment $[0, X_{BM}^{(k)}(t)]$. Using Theorem~\ref{theo:tsirelson_spectrum} and Theorem~\ref{theo:V_k_Lipschitz_balls} we obtain that
$$
\E \Vol_k (\spec_k (\LLL_{BM}^{\infty})) = \frac{1}{(2\sqrt{2\pi})^k} \binom{\frac 32 k}{k}^{-1}.
$$
Using Kubota's formula~\eqref{eq:kubota} and the invariance of the Brownian motion under projections, we obtain that
$$
\E V_m (\spec_k (\LLL_{BM}^{\infty})) = \binom k m \frac{\kappa_k}{\kappa_m \kappa_{k-m}}\cdot \frac{1}{(2\sqrt{2\pi})^m} \binom{\frac 32 m}{m}^{-1}.
$$
Similarly, one shows that $\spec_k (\LLL_{BB}^{\infty})$ is the zonoid spanned by the $k$-dimensional Brownian bridge $\{X_{BB}^{(k)}\colon t\in [0,1]\}$. Using Theorem~\ref{theo:tsirelson_spectrum} and Theorem~\ref{theo:V_k_Lipschitz_balls} we obtain that
$$
\E \Vol_k (\spec_k (\LLL_{BB}^{\infty})) = \frac{\sqrt \pi}{2 (2\sqrt{2\pi})^{k}} \binom{\frac 32 k}{k}^{-1}.
$$
Using Kubota's formula~\eqref{eq:kubota} and the invariance of the Brownian bridge under projections we obtain that
$$
\E V_m (\spec_k (\LLL_{BB}^{\infty})) = \binom k m \frac{\kappa_k}{\kappa_m \kappa_{k-m}} \cdot  \frac{\sqrt \pi}{2 (2\sqrt{2\pi})^{m}} \binom{\frac 32 m}{m}^{-1}.
$$
%Using Theorem~\ref{theo:V_k_Lipschitz_balls_discrete} (see below) instead of Theorem~\ref{theo:V_k_Lipschitz_balls} one can compute the expected intrinsic volumes of the zonoid generated by a Gaussian random walk.

Finally, one can also obtain  discrete versions of the above results. Denote by $\zon (v_1,\ldots,v_n)$ the zonotope spanned by a collection of vectors $v_1,\ldots,v_n$:
$$
\zon (v_1,\ldots,v_n) = \{\alpha_1 v_1+\ldots+\alpha_n v_n\colon \alpha_1,\ldots, \alpha_n \in [0,1]\}.
$$
Consider the sets
\begin{equation}\label{eq:F_BM_F_BB_def}
\FFF_{BM,n} = \zon\{P_1,\ldots,P_n\},\;\;\;
\FFF_{BB,n} = \zon\{P_1^*,\ldots,P_n^*\},
\end{equation}
The sets $\FFF_{BM,n}$ and $\FFF_{BB,n}$ are parallelotopes and can be seen as the finite-dimensional analogues of the sets $\LLL_{BM}^{\infty}$ and $\LLL_{BB}^{\infty}$; see Section~\ref{subsec:fi_di_lip_zonotopes}.
We next result complements Theorem~\ref{theo:gao_vitale_discrete}.
%is a discrete version of Theorem~\ref{theo:V_k_Lipschitz_balls}.
%Consider the finite-dimensional analogue of the set $\LLL^{\infty}$:
%$$
%\FFF_n=\{(x_0,x_1,\ldots,x_n)\in\R^{n+1}: 0\leq x_{1}-x_{0} \leq 1, \ldots, 0\leq x_{n}-x_{n-1}\leq 1\}.
%$$
%By adding appropriate boundary conditions we obtain the discrete analogues of the sets $\LLL_{BM}^{\infty}$ and %$\LLL_{BB}^{\infty}$:
%$$
%\FFF_{BM,n} = \FFF_n\cap \{x_0 = 0\},
%\quad
%\FFF_{BB,n} = \FFF_n\cap \{x_0+\ldots+x_n=0\}.
%$$
\begin{theorem}\label{theo:V_k_Lipschitz_balls_discrete}
For every $k=1,\ldots,n$ it holds that
\begin{align}
V_k(\FFF_{BM,n})
&=
\sum_{A_{n,k}} \sqrt{d_1 \ldots d_k},
\\
V_k(\FFF_{BB,n})
&=
\sum_{B_{n,k}} \sqrt{d_1 \ldots d_{k+1}},
\end{align}
%\begin{align}
%V_k(\FFF_{BM,n})
%&=
%\sum_{1\leq l_1<\ldots<l_k\leq n} \sqrt{l_1(l_2-l_1)\ldots (l_k-l_{k-1})}, \label{eq:V_k_F_BM_n}\\
%V_k(\FFF_{BB,n})
%&=
%sum_{1\leq l_1<\ldots<l_k\leq n} \sqrt{l_1(l_2-l_1)\ldots (l_k-l_{k-1})(n-l_k)}. \label{eq:V_k_F_BB_n}
%\end{align}
where $A_{n,k}$ is the set of all $(d_1,\ldots,d_k)\in \N^k$ with $d_1+\ldots+d_k\leq n$ and $B_{n,k}$ is the set of all $(d_1,\ldots,d_{k+1})\in \N^{k+1}$ with $d_1+\ldots+d_{k+1} = n$.
\end{theorem}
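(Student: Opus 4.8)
The plan is to reduce both identities to the classical combinatorial formula for the intrinsic volumes of a zonotope. For a zonotope $Z=\sum_{i=1}^m[0,v_i]\subset\R^n$ one has
$$
V_k(Z)=\sum_{1\le i_1<\cdots<i_k\le m}\Vol_k\Bigl(\textstyle\sum_{a=1}^k[0,v_{i_a}]\Bigr)=\sum_{1\le i_1<\cdots<i_k\le m}\sqrt{\det\bigl(\langle v_{i_a},v_{i_b}\rangle\bigr)_{a,b=1}^{k}},
$$
since each summand parallelepiped has $k$-volume equal to the square root of the Gram determinant of its edge vectors, and a linearly dependent family contributes $0$ automatically (see~\cite{schneider_weil_book}). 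Applying this with $v_i=P_i$, respectively $v_i=P_i^*$, turns the whole problem into the evaluation of these Gram determinants together with a reindexing of the resulting sums.

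For $\FFF_{BM,n}$ I would first record that $\langle P_i,P_j\rangle=\min(i,j)$, because $P_i$ and $P_j$ agree (and equal $1$) in exactly $\min(i,j)$ coordinates. Hence for an index set $i_1<\cdots<i_k$ the Gram matrix is the ``$\min$-matrix'' $(\min(i_a,i_b))_{a,b}$, and subtracting from each row the preceding one (bottom to top) triangularizes it and gives $\det(\min(i_a,i_b))=i_1(i_2-i_1)\cdots(i_k-i_{k-1})$. Introducing the gaps $d_1=i_1,\ d_2=i_2-i_1,\dots,d_k=i_k-i_{k-1}$ sets up a bijection between $k$-subsets of $\{1,\dots,n\}$ and tuples $(d_1,\dots,d_k)\in\N^k$ with $d_1+\cdots+d_k=i_k\le n$, which is exactly $A_{n,k}$. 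Under this change of variables the determinant becomes $d_1\cdots d_k$, and summing the square roots yields the first formula.

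For $\FFF_{BB,n}$ the efficient route is to observe that $P_i^*=P_i-\tfrac{i}{n}P_n$ equals the orthogonal projection of $P_i$ onto the hyperplane $P_n^{\perp}$ (since $(1,\dots,1)=P_n$ and $\tfrac{i}{n}=\langle P_i,P_n\rangle/\langle P_n,P_n\rangle$). Thus each Gram determinant $\det(\langle P_{i_a}^*,P_{i_b}^*\rangle)$ is computed by the bordered-Gram (Schur complement) identity: bordering the matrix $(\langle P_{i_a},P_{i_b}\rangle)$ by the row and column $\langle P_{i_a},P_n\rangle$ and the corner $\langle P_n,P_n\rangle=n$ gives $\det(\langle P_{i_a}^*,P_{i_b}^*\rangle)=\tfrac1n\det\bigl(\langle v,w\rangle\bigr)_{v,w\in\{P_{i_1},\dots,P_{i_k},P_n\}}$. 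The right-hand determinant is again a $\min$-matrix determinant, now for the augmented increasing indices $i_1<\cdots<i_k<n$ followed by $n$, so by the previous step it factors as $i_1(i_2-i_1)\cdots(i_k-i_{k-1})(n-i_k)=d_1\cdots d_{k+1}$ with $d_{k+1}=n-i_k$. Consequently each parallelepiped has $k$-volume $\tfrac1{\sqrt n}\sqrt{d_1\cdots d_{k+1}}$; since $d_1+\cdots+d_{k+1}=n$ with all parts positive, the admissible index sets are precisely the compositions in $B_{n,k}$ (the degenerate generator $P_n^*=0$ contributes nothing, which is why only subsets of $\{1,\dots,n-1\}$ occur), and summing gives the second formula.

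The only genuinely computational ingredient is the $\min$-matrix determinant, and I expect this to be the main — though essentially routine — obstacle; everything else is combinatorial bookkeeping via the gap variables. For the bridge I would take particular care with the bordered-Gram reduction and with the normalization $\langle P_n,P_n\rangle=n$ that it introduces, and I would note separately that $\FFF_{BB,n}$ lies in the $(n-1)$-dimensional subspace $P_n^{\perp}$, so that $V_k(\FFF_{BB,n})=0$ for $k\ge n$ in agreement with $B_{n,n}=\varnothing$.
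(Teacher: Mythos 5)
Your treatment of $\FFF_{BM,n}$ is correct and follows essentially the paper's own route: both reduce to the formula expressing $V_k$ of a parallelotope (zonotope) as the sum of the square roots of the Gram determinants of $k$-subfamilies of generators, both identify these as min-matrix determinants, and both pass to the gap variables $d_a$. The only difference is that you triangularize $(\min(i_a,i_b))$ by row operations while the paper evaluates the same determinant probabilistically via the density of $(B(l_1),\ldots,B(l_k))$ at zero --- and the paper itself notes that elementary row transformations would suffice. Your projection/Schur-complement treatment of the bridge case is likewise the deterministic twin of the paper's argument: the conditional covariance of $(B(l_1),\ldots,B(l_k))$ given $B(n)=0$ is exactly the Schur complement you compute via the bordered Gram matrix.

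However, your proof of the second display fails at its final sentence, and the failure is instructive. Your bordered-Gram computation is correct: since $P_i^*$ is the orthogonal projection of $P_i$ onto $P_n^{\perp}$ and $\langle P_n,P_n\rangle=n$,
$$
\det\bigl(\langle P_{i_a}^*,P_{i_b}^*\rangle\bigr)_{a,b=1}^{k}
=\frac1n\, i_1(i_2-i_1)\cdots(i_k-i_{k-1})(n-i_k)
=\frac{d_1\cdots d_{k+1}}{n},
$$
so each parallelepiped has $k$-volume $n^{-1/2}\sqrt{d_1\cdots d_{k+1}}$, exactly as you say. But then summing yields $n^{-1/2}\sum_{B_{n,k}}\sqrt{d_1\cdots d_{k+1}}$, and the claim that this ``gives the second formula'' is a non sequitur: an unexplained factor $\sqrt n$ has been dropped. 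In fact your value is the correct one and the statement as printed is off by $n^{-1/2}$: for $n=2$, $k=1$ the set $\FFF_{BB,2}$ is the segment from the origin to $(-\tfrac12,\tfrac12)$, so $V_1(\FFF_{BB,2})=1/\sqrt2$, whereas the displayed formula gives $\sqrt{1\cdot 1}=1$. The same $n^{-1/2}$ appears, consistently, inside the square root in the Gao--Vitale formula for $V_k(\TTT_{BB,n})$ in Theorem~\ref{theo:gao_vitale_discrete}. The paper's own proof loses the factor in the displayed density identity for the conditioned vector: conditioning on $B(n)=0$ introduces the normalization $p_n(0)=(2\pi n)^{-1/2}$ in the denominator, so the Markov-property computation actually gives $\det G_{l_1,\ldots,l_k}=l_1(l_2-l_1)\cdots(l_k-l_{k-1})(n-l_k)/n$, matching your Schur-complement value. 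Had you carried your $n^{-1/2}$ honestly to the end, you would have corrected both the statement and the paper's proof; as written, you should either flag the contradiction explicitly or state the corrected identity $V_k(\FFF_{BB,n})=n^{-1/2}\sum_{B_{n,k}}\sqrt{d_1\cdots d_{k+1}}$ (your side remarks --- that $P_n^*=0$ contributes nothing, and that $V_k(\FFF_{BB,n})=0$ for $k\geq n$ in agreement with $B_{n,k}=\varnothing$ --- remain valid under this correction).
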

Clearly, $\spec_k(\FFF_{BM,n})$ is the zonotope spanned by the $n$-step Gaussian random walk in $\R^k$. Similarly, $\spec_k(\FFF_{BB,n})$ is the zonotope spanned by  the $n$-step Gaussian random walk in $\R^k$ conditioned to return to the origin at time $n$. Namely,
\begin{align*}
\spec_k(\FFF_{BM,n}) & \eqdistr \zon\{X_{BM}^{(k)}(1), \ldots, X_{BM}^{(k)}(n)\},\\
\spec_k(\FFF_{BB,n}) & \eqdistr \zon\{X_{BM}^{(k)}(1), \ldots, X_{BM}^{(k)}(n)\} | \{X_{BM}^{(k)}(n)=0\}.
\end{align*}
Tsirelson's Theorem~\ref{theo:tsirelson_spectrum}, together with Theorem~\ref{theo:V_k_Lipschitz_balls_discrete}, yields the formulae for the expected volumes of these zonotopes:
\begin{align}
\E \Vol_k (\spec_k(\FFF_{BM,n})) &= \frac{k! \kappa_k}{(2\pi)^{k/2}} \sum_{A_{n,k}} \sqrt{d_1\ldots d_k},\\
\E \Vol_k (\spec_k(\FFF_{BB,n})) &= \frac{k! \kappa_k}{(2\pi)^{k/2}} \sum_{B_{n,k}} \sqrt{d_1\ldots d_{k+1}}.
\end{align}
Using Kubota's formula~\eqref{eq:kubota} one obtains a  generalization of these formulae to arbitrary intrinsic volumes. For every $1\leq m\leq k$,
\begin{align}
\E V_m (\spec_k(\FFF_{BM,n})) &= \frac {m!} {(2\pi)^{m/2}} \binom{k}{m} \frac{\kappa_k}{\kappa_{k-m}} \sum_{A_{n,m}} \sqrt{d_1\ldots d_m},\\
\E V_m (\spec_k(\FFF_{BB,n})) &= \frac {m!} {(2\pi)^{m/2}} \binom{k}{m} \frac{\kappa_k}{\kappa_{k-m}} \sum_{B_{n,m}} \sqrt{d_1\ldots d_{m+1}}.
\end{align}

\section{Intrinsic volumes of Lipschitz balls: Proof of Theorems~\ref{theo:V_k_Lipschitz_balls_discrete} and~\ref{theo:V_k_Lipschitz_balls}}\label{subsec:Lipschitz_balls}
%In fact, we can compute all intrinsic volumes of the Lipschitz balls %This complements the results of~\citet{GV01} and~\citet{fG03} who cconsidered the $BV$ balls.
%\subsection{Intrinsic volumes of the zonotope spanned by the  Wiener spiral}\label{subsec:lipschitz_all_V_k}

In this section we compute the intrinsic volumes of the Lipschitz balls $\KKK_{BM}^{\infty}$, $\KKK_{BB}^{\infty}$, $\LLL_{BM}^{\infty}$, $\LLL_{BB}^{\infty}$ and their finite-dimensional analogues $\FFF_{BM,n}$ and $\FFF_{BB,n}$.

\subsection{Finite-dimensional Lipschitz zonotopes}\label{subsec:fi_di_lip_zonotopes}
Recall that $\LLL^{\infty}$ is the set of non-decreasing functions on $[0,1]$ with Lipschitz constant at most $1$; see Section~\ref{sec:sobolev_balls}.
Consider the finite-dimensional analogues of the sets $\LLL_{BM}^{\infty}$ and $\LLL_{BB}^{\infty}$:
\begin{align*}
\FFF_{BM,n} &= \{(x_1,\ldots,x_n)\in\R^{n}: x_{1},x_2 - x_{1}, \ldots, x_{n}-x_{n-1}\in [0,1]\},\\
\FFF_{BB,n} &= \{(x_1,\ldots,x_n)\in\R^{n}: x_2 - x_{1}, \ldots, x_{n}-x_{n-1}\in [0,1], x_1+\ldots+x_n=0\}.
\end{align*}
It is easy to see that these definitions are equivalent to the previous ones; see~\eqref{eq:F_BM_F_BB_def}. Our aim is to prove Theorem~\ref{theo:V_k_Lipschitz_balls_discrete} which can be restated as follows:
%Consider the finite-dimensional analogue of the set $\LLL^{\infty}$:
%$$
%\FFF_n=\{(x_0,x_1,\ldots,x_n)\in\R^{n+1}: 0\leq x_{1}-x_{0} \leq 1, \ldots, 0\leq x_{n}-x_{n-1}\leq 1\}.
%$$
%By adding appropriate boundary conditions we obtain the discrete analogues of the sets $\LLL_{BM}^{\infty}$ and %$\LLL_{BB}^{\infty}$:
%$$
%\FFF_{BM,n} = \FFF_n\cap \{x_0 = 0\},
%\quad
%\FFF_{BB,n} = \FFF_n\cap \{x_0+\ldots+x_n=0\}.
%$$
\begin{align}
V_k(\FFF_{BM,n})
&=
\sum_{1\leq l_1<\ldots<l_k\leq n} \sqrt{l_1(l_2-l_1)\ldots (l_k-l_{k-1})}, \label{eq:V_k_F_BM_n}\\
V_k(\FFF_{BB,n})
&=
\sum_{1\leq l_1<\ldots<l_k\leq n} \sqrt{l_1(l_2-l_1)\ldots (l_k-l_{k-1})(n-l_k)}. \label{eq:V_k_F_BB_n}
\end{align}
\begin{proof}[Proof of Theorem~\ref{theo:V_k_Lipschitz_balls_discrete}]
We prove~\eqref{eq:V_k_F_BM_n}. Consider the linear operator $A:\R^n\to\R^{n}$ defined by
$$
A(\delta_1,\ldots,\delta_n) = (\delta_1,\delta_1+\delta_2,\ldots, \delta_1+\ldots+\delta_n).
$$
Then, $\FFF_{BM,n}$ is the image of the unit cube $[0,1]^n$ under the operator $A$. In particular, $\FFF_{BM,n}$ is the parallelotope generated by the vectors $Ae_1,\ldots,Ae_n$, where $e_1,\ldots,e_n$ is the standard basis of $\R^n$. A formula for the  intrinsic volumes of a parallelotope is well-known, see~\cite[Theorem~9.8.2]{klain_rota_book}, and yields in our case
\begin{align*}
V_k(\FFF_{BM,n})
&=
\sum_{1\leq m_1<\ldots<m_k\leq n} \Vol_k (\zon (Ae_{m_1},\ldots,Ae_{m_k}))\\
&=
\sum_{1\leq l_1<\ldots<l_k\leq n} \Vol_k (\zon (Ae_{n-l_1+1},\ldots,Ae_{n-l_k+1})).
\end{align*}
Denoting by $G_{l_1,\ldots,l_k}$ the Gram matrix of the collection $\{A e_{n-l_1+1},\ldots,A e_{n-l_k+1}\}$, we have
\begin{equation}\label{eq:V_k_F_BM_n_Gram_matrix}
V_k(\FFF_{BM,n})
=
\sum_{1\leq l_1<\ldots<l_k\leq n} \sqrt{ \det (G_{l_1,\ldots,l_k})}.
\end{equation}
The $(i,j)$-th entry of $G_{l_1,\ldots,l_k}$ is given by $\min (l_{i}, l_{j})$.  The determinant of $G_{l_1,\ldots,l_k}$ can be computed by elementary row transformations, but we prefer to use probabilistic reasoning. Namely, observe that $G_{l_1,\ldots,l_k}$ is the covariance matrix of the random vector $(B(l_1),\ldots,B(l_k))$, where $B$ denotes a standard  Brownian motion. The probability density of this random vector at point zero can be computed by using the formula for the multivariate Gaussian density or by using the Markov property of the Brownian motion.
%Hence, the density of this vector at $0$ is $(2\pi)^{-k/2} (\det G)^{-1/2}$. On the other hand, the same density can be computed by the Markov property of the Brownian motion to be
Comparing both results we obtain that
$$
\frac{1}{(\sqrt{2\pi})^{k}\sqrt {\det (G_{l_1,\ldots,l_k})}} =  \frac{1}{(\sqrt{2\pi})^{k}\sqrt{l_1(l_2-l_1)\ldots (l_k-l_{k-1})}}.
$$
Inserting the resulting formula for $\det (G_{l_1,\ldots,l_k})$ into~\eqref{eq:V_k_F_BM_n_Gram_matrix} we obtain~\eqref{eq:V_k_F_BM_n}.

\vspace*{2mm}
The proof of~\eqref{eq:V_k_F_BB_n} is similar. This time we consider the linear operator $A:\R^{n-1}\to \R^{n}$ given by
$$
A(\delta_1,\ldots,\delta_{n-1}) = \left(-s, \delta_1 - s, \delta_1+\delta_2- s,\ldots,\delta_1+\ldots+\delta_{n-1}- s\right),
$$
where
$$
s=s(\delta_1,\ldots,\delta_{n-1}) = \frac 1 {n} ((n-1)\delta_1 + (n-2) \delta_2 + \ldots+ \delta_{n-1}).
$$
Then, $\FFF_{BB,n}$ is the image of the unit cube $[0,1]^{n-1}$ under the operator $A$. By the formula for the intrinsic volumes of a parallelotope, see~\cite[Theorem~9.8.2]{klain_rota_book}, we have
\begin{equation}\label{eq:V_k_F_BB_n_Gram_matrix}
V_k(\FFF_{BB,n})
=
\sum_{1\leq l_1<\ldots<l_k\leq n-1} \sqrt{ \det (G_{l_1,\ldots,l_k})}.
\end{equation}
Here, $G_{l_1,\ldots,l_k}$ is the Gram matrix of the collection $\{Ae_{n-l_1},\ldots,Ae_{n-l_k}\}$. The $(i,j)$-th entry of this matrix is equal to $\min (l_i,l_j)- \frac 1{n} l_i l_j$. Again, it is easy to compute the determinant of $G_{l_1,\ldots,l_k}$ by using row transformations, but we will provide a probabilistic argument. Observe that $G_{l_1,\ldots,l_k}$ is the covariance matrix of the random vector $(B(l_1),\ldots, B(l_k))$ conditioned on $B(n)=0$. Computing the density of this vector by using the formula for the multivariate Gaussian density and by using the Markov property of the Brownian bridge, we obtain
$$
\frac{1}{(\sqrt{2\pi})^{k}\sqrt {\det (G_{l_1,\ldots,l_k})}} =  \frac{1}{(\sqrt{2\pi})^{k}\sqrt{l_1(l_2-l_1)\ldots (l_k-l_{k-1})(n-l_k)}}.
$$
Inserting this into~\eqref{eq:V_k_F_BB_n_Gram_matrix} we obtain~\eqref{eq:V_k_F_BB_n}.
%$$
%\sqrt{\det G(G(Ae_{l_1},\ldots,Ae_{l_k}))} = \sqrt{l_1(l_2-l_1)\ldots (l_k-l_{k-1})(n-l_k)}.
%$$
\end{proof}

%\begin{proof}[Proof of Theorem~\ref{theo:V_k_Lipschitz_balls}]
\subsection{Proof of Theorem~\ref{theo:V_k_Lipschitz_balls}}
The idea is to approximate the sets $\LLL_{BM}^{\infty}$ and $\LLL_{BB}^{\infty}$ by their discrete analogues.

\vspace*{2mm}
\noindent
\textsc{Step 1: $\LLL_{BM}^{\infty}$.} Take some $n\in\N$ and let $\LLL_{BM,n}^{\infty}$ be the parallelotope in $L^2[0,1]$ spanned by the functions $f_{1,n},\ldots,f_{n,n}$, where
$$
f_{l,n}(t) =
\begin{cases}
0, &\text{if } 0\leq t\leq \frac{n-l}{n},\\
t-\frac{n-l}{n}, &\text{if } \frac{n-l}{n}\leq t \leq \frac{n-l+1}{n},\\
1, &\text{if } \frac{n-l+1}{n}\leq t\leq 1.
\end{cases}
$$
It is clear that $\LLL_{BM,n}^{\infty}$ consists of all functions which are non-decreasing, piecewise linear with knots at $0,\frac 1n, \ldots, \frac{n-1}{n},1$, have Lipschitz constant at most $1$ and which vanish at $0$. In particular, we have $\LLL_{BM,2^n}^{\infty} \subset  \LLL_{BM,2^{n+1}}^{\infty}$, for $n\in \N$, and
$$
\LLL_{BM}^{\infty} = \overline{\bigcup_{n=1}^{\infty} \LLL_{BM, 2^n}^{\infty}}.
$$
By the lower semicontinuity of the functional $V_k$ (which is stated in Proposition~13 of~\cite{vS76} for $k=1$ but is valid for any $k\in\N$ with the same proof), we have
$$
V_k(\LLL_{BM}^{\infty}) = \lim_{n\to\infty} V_k(\LLL_{BM, 2^n}^{\infty}).
$$
To compute $V_k(\LLL_{BM, n}^{\infty})$ we proceed as in the proof of Theorem~\ref{theo:V_k_Lipschitz_balls_discrete}. The $(i,j)$-th entry of the Gram matrix of the collection $\{f_{1,n},\ldots, f_{n,n}\}$ is
$$
\langle f_{i,n}, f_{j,n} \rangle_{L^2} = \frac 1 {n^3} \left(\min (i,j) - \frac 12 -\frac 16 \ind_{i=j}\right).
$$
Thus, the Gram matrix of the collection $\{n^{3/2}f_{1,n},\ldots, n^{3/2} f_{n,n}\}$ is very close but not equal to the Gram matrix of the collection $\{A e_n,\ldots, A e_1\}$ which spans the parallelotope $\FFF_{BM,n}$. Repeating the argument from the proof of Theorem~\ref{theo:V_k_Lipschitz_balls_discrete}, we obtain
\begin{align*}
V_k(\LLL_{BM,n}^{\infty})
&=
\frac{1}{n^{3k/2}} \sum_{1\leq l_1<\ldots<l_k\leq n} \sqrt{l_1(l_2-l_1)\ldots (l_k-l_{k-1}) + O(n^{k-1})}\\
&=
\frac{1}{n^{k}} \sum_{1\leq l_1<\ldots<l_k\leq n} \sqrt{\frac{l_1}{n} \left(\frac{l_2}{n}-\frac{l_1}{n}\right) \ldots \left(\frac{l_k}{n}-\frac{l_{k-1}}{n}\right) + O(n^{-1})},
\end{align*}
where the constant in the $O$-term does not depend on $l_1,\ldots,l_k$. Replacing Riemann sums by Riemann integrals we obtain
$$
\lim_{n\to\infty} V_k(\LLL_{BM,n}^{\infty})
=
\underset{0\leq a_1<a_2<\ldots<a_k\leq 1}{\int\ldots\int}\sqrt{a_1(a_2-a_1)\ldots(a_k-a_{k-1})}\, \dd a_1\ldots \dd a_k.
$$
The integral is easy to compute:
$$
\lim_{n\to\infty} V_k(\LLL_{BM,n}^{\infty})
=
\frac 2 {3k} B\left(\frac 32 k,\ldots, \frac 32 k\right)
=
\frac{2^{-k} \pi^{k/2}}{\Gamma\left(\frac 3 2 k + 1\right)},
$$
where the Beta function $B$ has $k$ variables.  This gives the required formula for $V_k(\LLL_{BM}^{\infty})$.

\vspace*{2mm}
\noindent
\textsc{Step 2: $\LLL_{BB}^{\infty}$.}
In the setting of $\LLL_{BB}^{\infty}$ one similarly arrives at the integral
$$
V_k(\LLL_{BB}^{\infty})
=
\underset{0\leq a_1<a_2<\ldots<a_k\leq 1}{\int\ldots\int}\sqrt{a_1(a_2-a_1)\ldots(a_k-a_{k-1})(1-a_k)}\, \dd a_1\ldots \dd a_k.
$$
The integral can be computed using the Beta function with $k+1$ variables:
$$
V_k(\LLL_{BB}^{\infty})
=
B\left(\frac 32 k,\ldots, \frac 32 k\right)
=
\frac{2^{-(k+1)} \pi^{(k+1)/2}} {\Gamma\left(\frac 3 2 k + \frac 32\right)}.
$$

\vspace*{2mm}
\noindent
\textsc{Step 3: $\KKK_{BM}^{\infty}$ and $\KKK_{BB}^{\infty}$.}
To compute the intrinsic volumes of $\KKK_{BM}^{\infty}$ and $\KKK_{BB}^{\infty}$, note that with $h(t)=t/2\in L^2$ we have the set equalities
$$
\KKK_{BM}^{\infty} = 2(\LLL_{BM}^{\infty}-h),
\quad
\KKK_{BB}^{\infty} = 2(\LLL_{BB}^{\infty}-h).
$$
This implies that $V_k(\KKK_{BM}^{\infty})=2^k V_k(\LLL_{BM}^{\infty})$ and $V_k(\KKK_{BB}^{\infty})=2^k V_k(\LLL_{BB}^{\infty})$.
%\end{proof}

\begin{remark}
\citet{GV01} conjectured that for any convex $GB$-set in a Hilbert space, the sequence $m_k:=(k+1)V_{k+1}/V_k$ (which is known to be decreasing) either converges to a strictly positive limit or satisfies $m_k=O(1/\sqrt k)$. It is easy to see that for the sets $\LLL_{BM}^{\infty}$ and $\LLL_{BB}^{\infty}$ we have
$m_k\sim \text{const} /\sqrt k$, so there is no contradiction with their conjecture.
\end{remark}

\section{Gaussian width of Sobolev balls: Proof of Theorem~\ref{theo:V_1_sobolev}}\label{sec:V_1_sobolev_balls}
%\subsection{Sobolev balls}
\subsection{The Gaussian width}
Our aim is to determine the first intrinsic volume  of $\KKK_*^p$ and $\LLL_*^p$. More generally, we will compute the distribution of the \textit{Gaussian width} of these sets.
For a bounded set $T\subset \R^n$, the Gaussian width $W_G(T)$ and the uniform width $W_U(T)$ are defined by
$$
W_G(T) = \sup_{t\in T} \langle N, t\rangle - \inf_{t\in T} \langle N, t\rangle,
\quad
W_U(T) = \sup_{t\in T} \langle U, t\rangle - \inf_{t\in T} \langle U, t\rangle,
$$
where $N$ has a standard normal distribution on $\R^n$, while $U$ has a uniform distribution on the unit sphere in $\R^n$. We have a representation
$$
W_G \eqdistr  R_n W_{U},
$$
where $R_n$ is a random variable which is independent of $U$ and such that $R_n^2$ has $\chi^2$-distribution with $n$ degrees of freedom. By the law of large numbers, $R_n/\sqrt n$ converges to $1$ in distribution, as $n\to\infty$. Thus, for large values of $n$, the scaled uniform width  $\sqrt n W_U$ is close to the Gaussian width $W_G$.  In the case of infinite $n$, the uniform width makes no sense, but there is a natural infinite-dimensional generalization of  $W_G$, namely the range of the isonormal process. Therefore,  for a set $T$ in a separable Hilbert space $H$ define its Gaussian width to be
\begin{equation}\label{eq:width_def}
\wid(T) = \sup_{t\in T} \xi(t) - \inf_{t\in T} \xi(t),
\end{equation}
where $\{\xi(h)\colon h\in H\}$ is the isonormal process over $H$. We always consider a separable version of the isonormal process; see Proposition~2.6.1 in~\cite{bogachev_book} for its existence.

The next theorem determines the Gaussian width of $\KKK_*^p$ and $\LLL_*^p$.
%The main result will be stated in Theorem~\ref{prop:mean_width_sobolev}.
We use the notation $x^+=\max(x,0)$ and $x^-=\max (-x,0)$. Recall that $X_*$ is a Gaussian process as in~\eqref{eq:X_*_def_1}--\eqref{eq:X_*_def_4}.
\begin{theorem}\label{prop:mean_width_sobolev}
Let $p\in [1,\infty]$ and $\frac 1p + \frac 1q = 1$.  The maxima of the isonormal process over $\KKK_*^p$ and $\LLL_*^p$ are given by
$$
\sup_{f\in \KKK_{*}^p} \xi(f) \eqdistr  \|X_*\|_q, \quad  \sup_{f\in \LLL_{*}^p} \xi(f) \eqdistr  \|X_*^+\|_q.
%\left(\int_0^1 |X_*(t)|^q dt\right)^{1/q} =
$$
The Gaussian width of $\KKK_*^p$ and $\LLL_*^p$ is given by
$$
\sup_{f\in \KKK_{*}^p} \xi(f) - \inf_{f\in \KKK_{*}^p} \xi(f) \eqdistr  2\|X_*\|_q,
%\sup_{f\in \KKK_{*}^p} \xi(f) - \inf_{f\in \KKK_{*}^p} \xi(f) =  2\|X_*\|_q,
\quad
\sup_{f\in \LLL_{*}^p} \xi(f) - \inf_{f\in \LLL_{*}^p} \xi(f) \eqdistr \|X_*^+\|_q + \|X_*^-\|_q.
%\left(\int_0^1 |X_*(t)|^q dt\right)^{1/q} =
$$
\end{theorem}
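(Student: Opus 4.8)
The plan is to reduce the supremum of the isonormal process over each Sobolev ball to a one‑dimensional sharp Hölder inequality, starting from the Wiener integral representation $\xi(f)\eqdistr\int_0^1 f(t)\,\dd W(t)$ of the isonormal process over $L^2[0,1]$. Writing $g=f'$, integrating by parts (for $p=1$ this becomes a Stieltjes identity for the signed measure $\dd f$, via the representation of Lemma~\ref{lem:isonormal_alternative_rep}), and using $W(0)=0$, one obtains
\begin{equation*}
\xi(f)=f(1)W(1)-\int_0^1 W(t)\,g(t)\,\dd t.
\end{equation*}
The first step is to absorb the boundary data: the defining conditions of $\KKK_*^p$ pin down the additive constant of $f$ and the value $f(1)$ as explicit linear functionals of $g$, so that after collecting terms $\xi(f)=\int_0^1 g(s)\,Y_*(s)\,\dd s$ for a Gaussian process $Y_*$ built from $W$. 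A short covariance computation, together with the fact that the time‑reversed process $s\mapsto W(1)-W(1-s)$ is again a Brownian motion, identifies $Y_*$ in law with $\pm X_*$; for instance in the Brownian bridge case the constant $f(0)$ fixed by $\int_0^1 f=0$ is exactly what turns $Y_{BB}(s)$ into $W(s)-sW(1)=X_{BB}(s)$.

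Once $\xi(f)=\int_0^1 g\,Y_*$ is in hand, the second step is duality. Over $\KKK_*^p$ the admissible $g$ range over $\{\|g\|_p\le 1\}$ (subject to the constraint discussed below), and the sharp Hölder inequality gives $\sup\int_0^1 g\,Y_*=\|Y_*\|_q$, with extremizer $g\propto\sgn(Y_*)\,|Y_*|^{q-1}$; over $\LLL_*^p$ one additionally has $g\ge 0$ by monotonicity of $f$, and restricting the extremizer to $\{Y_*>0\}$ yields $\|Y_*^{+}\|_q$. Passing to $Y_*\eqdistr X_*$ gives the two formulas for the maxima. The width statements follow from the same computation: for $\KKK_*^p$ the feasible set for $g$ is symmetric under $g\mapsto -g$, so the same realisation satisfies $\inf\xi=-\|Y_*\|_q$ and the width equals $2\|Y_*\|_q\eqdistr 2\|X_*\|_q$; for $\LLL_*^p$ one has instead $\inf_{g\ge0}\int_0^1 g\,Y_*=-\|Y_*^{-}\|_q$, whence the width $\|X_*^{+}\|_q+\|X_*^{-}\|_q$.

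I expect the main obstacle to be the centered cases $*\in\{CBM,CBB\}$. There the boundary conditions are of \emph{two} kinds (the two pinnings $f(0)=f(1)=0$ for $CBM$, and $\int_0^1 f=0$ together with $f(0)=f(1)$ for $CBB$), so after using up the single additive degree of freedom of $f$ one is left with the extra linear constraint $\int_0^1 g=0$ on $g=f'$. The bound $\int_0^1 g\,Y_*\le\|Y_*\|_q$ still holds, but to get equality one must exhibit a $g$ that is \emph{simultaneously} $L^q$‑extremal and of vanishing integral. The natural device is that the centered processes satisfy $\int_0^1 X_*(s)\,\dd s=0$, so that $\int_0^1 g\,Y_*$ is unchanged when an arbitrary constant is added to $Y_*$; the crux is to show that this freedom renders the mean‑zero constraint non‑binding. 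This is transparent for $p=2$, where the extremal direction is proportional to $X_*$ and hence already integrates to zero, and I anticipate that the careful treatment of this constraint for general $p$ — equivalently, verifying that it does not lower the supremum — is where the real work of the proof lies.
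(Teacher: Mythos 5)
Your reduction is essentially the paper's own: the stochastic integration by parts is exactly Lemma~\ref{lem:isonormal_alternative_rep}, the sharp H\"older step with extremizer $g\propto\sgn(Y_*)\,|Y_*|^{q-1}$ (an atom at the argmax when $p=1$) is the paper's extremizer, the restriction $g\geq 0$ over $\LLL_*^p$ giving $\|X_*^+\|_q$ and the $g\mapsto -g$ symmetry for the widths also match. For $*\in\{BM,BB\}$, and hence for all the statements about $\LLL_*^p$ (where only $*=BM,BB$ are admissible), your argument is complete: there the boundary conditions only fix the additive constant of $f$, so $g=f'$ ranges over the \emph{full} unit ball of $L^p$ and the unconstrained extremizer becomes admissible after adding a constant to $f$, which does not change $\dd f$.

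The obstacle you flag in the centered cases is, however, a genuine gap, and it cannot be closed the way you hope: the constraint $\int_0^1 g\,\dd t=0$ \emph{is} binding when $q\neq 2$. By the standard duality between the distance to a subspace and the supremum over the unit ball of its annihilator,
$$
\sup\left\{\int_0^1 g(t)\,Y(t)\,\dd t\colon \|g\|_p\leq 1,\ \int_0^1 g(t)\,\dd t=0\right\}
=\min_{c\in\R}\,\|Y-c\|_q,
$$
and the minimizing $c$ is the mean of $Y$ only for $q=2$; for $q=1$ it is a median of the occupation measure of $Y$, for $q=\infty$ the midrange $\frac 12(\sup Y+\inf Y)$, and for the centered processes these differ from $0$ almost surely. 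Since $\min_c\|X_*-c\|_q\leq\|X_*\|_q$ holds pathwise, with a.s.\ strict inequality when $q\neq2$, no admissible $f$ attains $\|X_*\|_q$, and the two random variables cannot even agree in distribution. So your instinct that only $p=2$ is ``transparent'' is exactly right — indeed for $q=2$ one has $\min_c\|X_*-c\|_2^2=\|X_*\|_2^2-\bigl(\int_0^1 X_*\bigr)^2=\|X_*\|_2^2$. You should know that the paper's own proof performs precisely the move you were hoping to justify: it modifies the extremizer to $f=g+a+bt$ and observes that the value of the functional is unchanged because $\int_0^1 X_*(t)\,\dd t=0$, but it never verifies the membership condition $\|f'\|_p=\|g'+b\|_p\leq 1$; this fails in general — for $p=\infty$ the extremal $g'$ satisfies $|g'|=1$ a.e., so $\|g'+b\|_\infty=1+|b|>1$ whenever $b\neq0$, and for $p=1$ adding the linear term inflates the total variation to $1+|b|$. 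Thus for $*\in\{CBM,CBB\}$ and $q\neq2$ neither your sketch nor the paper's argument establishes the stated identity; the duality computation above shows the supremum there actually equals $\min_c\|X_*-c\|_q$.
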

% we obtain formulas for $V_1(\KKK_*^p)$ and $V_1(\LLL_*^p)$.
Using Sudakov's Theorem~\ref{theo:sudakov} we immediately obtain Theorem~\ref{theo:V_1_sobolev} as a corollary of Theorem~\ref{prop:mean_width_sobolev}. The proof of Theorem~\ref{prop:mean_width_sobolev} will be given in Sections~\ref{subsec:isonormal} and~\ref{subsec:proof_prop_mean_width_sobolev}.

\subsection{Isonormal process over $\KKK_*^p$ and $\LLL_*^p$} \label{subsec:isonormal}
%\subsection{Proof of Theorem~\ref{prop:mean_width_sobolev}}
Let $\{W(t)\colon t\in [0,1]\}$ be a standard Brownian motion.   The isonormal process $\{\xi(f)\colon f\in L^2\}$ is given by
$$
\left\{\xi(f)\colon f\in L^2 \right\} \eqfdd \left\{\int_0^1 f(t)\dd W(t)\colon f\in L^2\right\},
$$
%$\{W(t)\colon t\in [0,1]\}$ is a standard Brownian motion and
where the stochastic integral is in the It\^{o} sense.
In Lemma~\ref{lem:isonormal_alternative_rep}  below we will provide an alternative representation of the isonormal process over $\KKK_*^p$ and $\LLL_*^p$. But first we show that the embedding of $\KKK_*^p$ and $\LLL_*^p$ into $L^2$ is injective.

\begin{lemma}\label{lem:ae_equal_hence_equal_restate}
Let $p\in [1,\infty]$. If $f\in \KKK_*^p$ and $g\in \KKK_*^p$ are equal Lebesgue-a.e.\ on $[0,1]$, then they are equal everywhere on $[0,1]$ (for $p\neq 1$) or on $\R$ (for $p=1$).
\end{lemma}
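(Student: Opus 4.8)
The plan is to split into the cases $p \neq 1$ and $p = 1$, the essential difference being that members of $\KKK_*^p$ are continuous when $p \neq 1$ but only c\`adl\`ag when $p = 1$, so in the latter case the jumps must be controlled by hand.

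For $p \in (1, \infty]$ the argument is immediate. Every $f \in \KKK_*^p$ is absolutely continuous, hence continuous on $[0,1]$, so $f - g$ is continuous and the set $\{t \in [0,1] : f(t) \neq g(t)\}$ is relatively open. Since $f = g$ Lebesgue-a.e., this set has measure zero; but a nonempty relatively open subset of $[0,1]$ contains a subinterval of positive length, so the set must be empty and $f = g$ everywhere on $[0,1]$. This case needs no real work.

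For $p = 1$ I would first recover equality on the half-open interval $[0,1)$ using right-continuity: for fixed $t \in [0,1)$ the agreement set $\{s : f(s) = g(s)\}$ has full measure in $(t,1)$, hence contains a sequence $s_n \downarrow t$, and right-continuity of both functions gives $f(t) = \lim_n f(s_n) = \lim_n g(s_n) = g(t)$. Passing to left limits at $1$ along $[0,1)$ then yields $f(1-) = g(1-)$, and since each function is constant on $(-\infty,0)$ with that constant equal to its left limit at $0$, everything reduces to matching $f$ and $g$ at the single point $t = 1$ and at the left value $f(0-)$.

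The one genuinely case-dependent step — the place where the definitions of the individual sets are actually used rather than mere c\`adl\`ag regularity — is this endpoint bookkeeping, and it is the only part I expect to require care. At $t=1$: in the $BM$ and $BB$ cases $J_f(1) = J_g(1) = 0$ forces $f(1) = f(1-) = g(1-) = g(1)$; in the $CBM$ case $f(1) = 0 = g(1)$; and in the $CBB$ case the condition $f(0) = f(1)$ together with the already-established $f(0) = g(0)$ gives $f(1) = f(0) = g(0) = g(1)$. Constancy on $[1,\infty)$ then propagates the equality to that ray. For the left value: in the $BM$ and $CBM$ cases $f(0-) = 0 = g(0-)$ directly, while in the $BB$ and $CBB$ cases $J_f(0) = 0$ yields $f(0-) = f(0) = g(0) = g(0-)$, so the two constants on $(-\infty,0)$ agree. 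Hence $f = g$ on all of $\R$. There is no analytic difficulty anywhere; the only thing to watch is keeping the four sets' boundary and jump conditions straight in this final step.
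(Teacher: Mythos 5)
Your proposal is correct and follows essentially the same route as the paper's proof: triviality via continuity for $p\neq 1$, right-continuity to get agreement on $[0,1)$ for $p=1$, and then a case-by-case check of the endpoints $f(1)$ and $f(0-)$ using exactly the boundary and jump conditions ($J_f(1)=0$ for $BM$ and $BB$, $f(1)=0$ for $CBM$, $f(0)=f(1)$ for $CBB$, and $J_f(0)=0$ or $f(0-)=0$ for the left value) that the paper invokes. Your treatment is slightly more explicit about propagating equality to the rays $(-\infty,0)$ and $[1,\infty)$, but the substance is identical.
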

\begin{proof}%[Proof if Lemma~\ref{lem:ae_equal_hence_equal}]
In the case $p\neq 1$ the functions $f$ and $g$ are continuous, so that the statement becomes trivial. Let $p=1$. Then, the functions $f$ and $g$ are right-continuous at any $t\in [0,1)$, so they must coincide there. We have to show that $f(0-)=g(0-)$ and $f(1)=g(1)$.

\vspace*{2mm}
\noindent
\textsc{Case $*=BM$.} Then, we have the boundary condition $f(0-)=g(0-)=0$ and the functions $f$ and $g$ are left-continuous at $1$, so that $f(1)=g(1)$.

\vspace*{2mm}
\noindent
\textsc{Case $*=CBM$.} Then, we have the boundary conditions $f(0-)=g(0-)=0$ and $f(1)=g(1)=0$.

\vspace*{2mm}
\noindent
\textsc{Case $*=BB$.} Then, $f$ and $g$ are left-continuous at $1$ and hence, $f(1)=g(1)$. Also,  we know that $f(0)=g(0)$ and since $J_f(0)=J_g(0)=0$, we get $f(0-)=g(0-)$.

\vspace*{2mm}
\noindent
\textsc{Case $*=CBB$.}  We know that $f(0)=g(0)$ and $J_f(0)=J_g(0)=0$, hence $f(0-)=g(0-)$. Also, we have the boundary conditions $f(0)=f(1)$ and $g(0)=g(1)$, hence $f(1)=g(1)$.
\end{proof}

%The next lemma gives an alternative representation of the isonormal process.
\begin{lemma}\label{lem:isonormal_alternative_rep}
With $X_*$ as in~\eqref{eq:X_*_def_1}--\eqref{eq:X_*_def_4} we have
\begin{equation}\label{eq:isonormal_repr}
\left\{\int_0^1 f(t) \dd W(t) \colon f\in \KKK_*^{p}\right\}
\eqfdd
\left\{ \int_0^1  X_{*}(1-t) \dd f(t)\colon f\in \KKK_*^{p}\right\},
\end{equation}
and similarly with $\LLL_*^p$ instead of $\KKK_*^p$.
\end{lemma}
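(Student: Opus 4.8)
The plan is to identify the two families in~\eqref{eq:isonormal_repr} by showing that both are centered, jointly Gaussian processes indexed by $f$, and then matching their covariances. The left-hand family $f\mapsto \int_0^1 f(t)\,\dd W(t)$ is centered Gaussian, and by the It\^o isometry $\Cov\left(\int_0^1 f\,\dd W,\int_0^1 g\,\dd W\right)=\langle f,g\rangle_{L^2}$. For the right-hand family I would first record that $X_*$ has continuous sample paths and that every $f\in\KKK_*^p$ is of bounded variation (absolutely continuous with $f'\in L^p\subset L^1$ when $p\neq 1$, and $TV(f)\le 1$ when $p=1$), so that the Riemann--Stieltjes integral $\int_0^1 X_*(1-t)\,\dd f(t)$ exists pathwise. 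Being an almost sure limit of Riemann--Stieltjes sums, which are linear combinations of the Gaussian family $\{X_*(u)\}$, each such integral is centered Gaussian, and jointly over finitely many $f_1,\dots,f_m\in\KKK_*^p$ the vector of integrals is jointly Gaussian. Since centered Gaussian vectors are determined by their covariances, it therefore suffices to verify, for all $f,g\in\KKK_*^p$, the identity
\[
\int_0^1\!\!\int_0^1 K_*(1-s,1-t)\,\dd f(s)\,\dd g(t)=\langle f,g\rangle_{L^2},
\]
where $K_*(u,v)=\Cov(X_*(u),X_*(v))$. The statement for $\LLL_*^p$ then follows immediately by restriction, since $\LLL_*^p\subset\KKK_*^p$ and equality of finite-dimensional distributions over the larger index set entails it over the smaller one.

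The mechanism of the covariance computation is clearest for $*=BM$, where $K_{BM}(u,v)=\min(u,v)$. Writing $\min(1-s,1-t)=\int_0^1 \ind\{v>s\}\,\ind\{v>t\}\,\dd v$ and applying Fubini separates the double Stieltjes integral into
\[
\int_0^1\left(\int_0^1 \ind\{v>s\}\,\dd f(s)\right)\left(\int_0^1 \ind\{v>t\}\,\dd g(t)\right)\dd v
=\int_0^1\bigl(f(v-)-f(0-)\bigr)\bigl(g(v-)-g(0-)\bigr)\,\dd v .
\]
The boundary condition $f(0-)=0$ (for $p=1$; and $f(0)=0$ with continuity for $p\neq1$), together with the fact that $f(v-)=f(v)$ for Lebesgue-a.e.\ $v$, reduces this to $\langle f,g\rangle_{L^2}$, as required. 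For the remaining three processes the kernel $K_*$ differs from $\min(u,v)$ by polynomial correction terms arising from the centering and bridge terms in~\eqref{eq:X_*_def_2}--\eqref{eq:X_*_def_4}; after the same reduction these produce additional summands which I would eliminate by integration by parts combined with the defining conditions. For instance, for $*=BB$ one has $K_{BB}(u,v)=\min(u,v)-uv$, the first term contributes $\int_0^1 fg + f(0)g(0)$ after using $\int_0^1 f=\int_0^1 g=0$, while integration by parts gives $\int_0^1(1-s)\,\dd f(s)=-f(0)$, so the $uv$-term contributes exactly $f(0)g(0)$, and the two corrections cancel.

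The main obstacle will be the centered cases $*=CBM$ and $*=CBB$, whose kernels carry the extra terms coming from $\int_0^1 W(s)\,\dd s$ (and its bridge analogue) and are correspondingly more involved; here the conditions $f(0-)=f(1)=0$ for $CBM$, and $\int_0^1 f=0$ together with $f(0)=f(1)$ for $CBB$, are precisely what is needed to annihilate the several correction terms after integration by parts. A second, purely bookkeeping, difficulty is the $p=1$ case, where $f$ may have jumps at the endpoints: one must check that the Stieltjes integral over $[0,1]$ together with the jump conditions ($J_f(1)=0$ for $BM$, $J_f(0)=J_f(1)=0$ for $BB$, etc.) reproduces the same cancellations, which is exactly the role played by those conditions and by the injectivity granted in Lemma~\ref{lem:ae_equal_hence_equal_restate}.

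As a remark on the structure, I expect that for $*=BM$ (and, using the time-reversal invariance of the Brownian bridge, for $*=BB$) there is a shorter route avoiding the kernel computation altogether: writing $\widehat W(s)=W(1)-W(1-s)$, which is again a standard Brownian motion, one gets $X_{BM}(1-t)=W(1)-\widehat W(t)$, whence $\int_0^1 X_{BM}(1-t)\,\dd f(t)=\widehat W(1)f(1)-\int_0^1 \widehat W(t)\,\dd f(t)=\int_0^1 f(t)\,\dd\widehat W(t)$ by integration by parts and $f(0)=0$; since $\widehat W\eqdistr W$ as processes, this yields the claim jointly in $f_1,\dots,f_m$. I would nonetheless base the main proof on the uniform covariance computation, as it treats all four boundary conditions and both ranges of $p$ on the same footing.
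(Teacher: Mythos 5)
Your proposal is correct, but it takes a genuinely different route from the paper. You prove the lemma by matching covariances of two centered, jointly Gaussian families: It\^o isometry on the left, and the double Stieltjes integral $\iint K_*(1-s,1-t)\,\dd f(s)\,\dd g(t)$ on the right, with the boundary conditions producing the cancellations. Your $BM$ and $BB$ computations check out, and the mechanism you describe does dispose of the cases you flag as the main obstacle: for $*=CBM$ and $*=CBB$ one has $\int_0^1 \dd f = f(1)-f(0-)=0$, so every summand of the kernel depending on only one of the two variables (and every constant) integrates to zero, and the remaining $uv$-type term cancels exactly as in your $BB$ computation. The paper, by contrast, runs precisely the argument you relegate to your closing remark, and pushes it through all four cases: it integrates by parts pathwise, $\int_0^1 f\,\dd W = f(1)W(1)-\int_0^1 W\,\dd f$, uses $\{X_{BM}(1-t)\}\eqfdd\{W(1)-W(t)\}$ for $*=BM$, and --- the point your sketch misses --- observes that for $*=CBM$, $BB$, $CBB$ the time-reversal symmetry $\{X_*(1-t)\}\eqfdd\{-X_*(t)\}$ holds as well, after which the boundary conditions (total mass $\int_0^1 \dd f=0$ where applicable, and $\int_0^1 t\,\dd f(t)=f(1)$ by parts) identify $-\int_0^1 X_*(t)\,\dd f(t)$ with $\int_0^1 f\,\dd W$ as an almost-sure identity, jointly in $f$, so no kernel computation is needed at all; the $\LLL_*^p$ case then follows by restriction in both treatments. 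What each approach buys: the paper's proof is shorter and yields a pathwise identity from a single process-level distributional equality (which is also convenient later, since the paper works with the version of the isonormal process given by the right-hand side of \eqref{eq:isonormal_repr}); your covariance computation is more mechanical, requires no spotting of distributional symmetries, and would extend to processes whose kernels lack such a time-reversal structure.
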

 %where $\{X_{*}(t)\colon t\in [0,1]\}$ is the corresponding stochastic process (which is defined in terms of $B$).
\begin{proof}
Note that any $f\in \KKK_*^p$ is a function with bounded variation. Integrating by parts, see e.g.~\cite[Theorem~2.3.7]{kuo_book} for justification,  we have
$$
\int_0^1 f(t) \dd W(t) = f(1)W(1) - \int_0^1 W(t)\dd f(t).
$$

\begin{case}{1: $*=BM$.}
Then, the process $\{X_*(1-t)\colon t\in [0,1]\}$ (which is a standard Brownian motion with reversed time) has the same finite-dimensional distributions as $\{W(1)-W(t)\colon t\in [0,1]\}$.  We have $f(0-)=0$ and hence,
$$
%-\int_0^1  X_{*}(t) \dd f(t) =
\int_0^1 (W(1)-W(t)) \dd f(t) = f(1) W(1) - \int_0^1 W(t) \dd f(t) = \int_0^1 f(t) \dd W(t). % = \int_0^1 f(t) \dd W(t).
$$
\end{case}
This proves~\eqref{eq:isonormal_repr}.

\vspace*{2mm}
In the remaining three cases, the process $\{X_*(1-t)\colon t\in [0,1]\}$ has the same finite-dimensional distributions as the process $\{-X_*(t) \colon t\in [0,1]\}$.
%$$
%\{X_*(1-t)\colon t\in [0,1]\} \eqfdd \{ - X_*(t) \colon t\in [0,1]\}.
%$$
So, we need to prove that
\begin{equation}\label{eq:isonormal_repr_modified}
\left\{\int_0^1 f(t) \dd W(t) \colon f\in \KKK_*^{p}\right\}
\eqfdd
\left\{ - \int_0^1  X_{*}(t) \dd f(t)\colon f\in \KKK_*^{p}\right\}.
\end{equation}

\begin{case}{2: $*=CBM$.}
We have $f(0-)=f(1)=0$ and writing $N=\int_0^1 W(s) \dd s$ we obtain
$$
-\int_0^1 X_*(t)\dd f(t)
=
\int_0^1 (N - W(t)) \dd f(t)
=
- \int_0^1 W(t) \dd f(t)
=
\int_0^1 f(t) \dd W(t).
$$
\end{case}

\begin{case}{3: $*=BB$.}
We have $\int_0^1 f(t) \dd t = 0$ and hence, by definition of $X_*(t)$,
$$
-\int_0^1 X_*(t)\dd f(t)
%=
%\int_0^1 (tW(1) - W(t)) \dd f(t)
=
W(1)\int_0^1 t \dd f(t) - \int_0^1 W(t) \dd f(t)
=
f(1)W(1) - \int_0^1 W(t) \dd f(t),
$$
where we used that $\int_0^1 t \dd f(t) = f(1)$ by integration by parts.
% the right-hand side is equal to $-\int_{0}^1 W(t) \dd f(t)$ which is equal to $\int_0^1 X_*(t)\dd f(t)$ because $\int_0^1 \dd f(t)=0$.
\end{case}

\begin{case}{4: $*=CBB$.}
We have $f(0)=f(1)$  and $\int_0^1 f(t)\dd t =0$. Writing $N = \int_0^1 (W(s) -sW(1)) \dd s$ we have
$$
-\int_0^1 X_*(t)\dd f(t)
=
\int_0^1 (tW(1) - W(t) + N) \dd f(t)
=
f(1)W(1) - \int_0^1 W(t) \dd f(t),
$$
where we used that $\int_0^1 \dd f(t)=0$ and $\int_0^1 t\dd f(t) = f(1)$ by integration by parts.
\end{case}
In the setting of $\LLL_*^p$, the statement of the lemma follows by restriction from $\KKK_*^p$.
\end{proof}

\begin{lemma}\label{lem:continuity}
Let $p\in [1,\infty]$ and  $X:[0,1]\to\R$ be a continuous function. In the case $p=1$ we make the following additional assumptions:
\begin{enumerate}
\item If $*=BM$, then $X(1)=0$.
\item If $*=BB$, then $X(0)=X(1)=0$.
\item If $*=CBB$, then $X(0)=X(1)$.
\end{enumerate}
Then,
$$
\Psi: f\mapsto \int_0^1 X(t)\dd f(t)
$$
is a continuous mapping from $\KKK_*^p$ or $\LLL_*^p$ (considered as subsets of $L^2$) to $\R$.
\end{lemma}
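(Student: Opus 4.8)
My plan is to first reduce to sequential continuity. Since $\KKK_*^p$ is a compact subset of the metric space $L^2$ by Lemma~\ref{lem:sobolev_balls_are_compact}, its subspace topology is metrizable, so it suffices to show that $f_n\to f$ in $L^2$ with $f_n,f\in\KKK_*^p$ forces $\Psi(f_n)\to\Psi(f)$. Because $\LLL_*^p\subset\KKK_*^p$, the claim for $\LLL_*^p$ would follow by restriction, so I would treat only $\KKK_*^p$. The common idea in both cases is to replace the merely continuous integrand $X$ by a smooth approximant $\phi$, for which integration by parts is available, and to absorb the approximation error using the uniform bound on the ``total mass'' of $\dd f_n$ (the $L^p$-norm of $f_n'$ when $p\neq1$, the total variation when $p=1$, both at most $1$).

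For $p\in(1,\infty]$ I would argue via weak compactness, and no condition on $X$ is needed. Here every $f\in\KKK_*^p$ is absolutely continuous, so $\Psi(f)=\int_0^1 X(t)f'(t)\,\dd t$, and the derivatives $f_n'$ are bounded in $L^p$. Since $L^p$ is reflexive for $p<\infty$ (respectively $L^\infty=(L^1)^*$ with $L^1$ separable when $p=\infty$), a subsequence $f_{n_k}'$ converges weakly (weak-$*$) to some $g$. Testing against $C_c^\infty$-functions identifies $g=f'$, because $f_n\to f$ in $L^2$ forces $f_n'\to f'$ in the sense of distributions. As $X$ is continuous, hence lies in $L^q$, weak convergence gives $\int_0^1 Xf_{n_k}'\,\dd t\to\int_0^1 Xf'\,\dd t$, i.e. $\Psi(f_{n_k})\to\Psi(f)$; running this for every subsequence yields $\Psi(f_n)\to\Psi(f)$.

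For $p=1$ the boundary hypotheses enter. I would fix $\eps>0$ and choose $\phi\in C^1[0,1]$ with $\|X-\phi\|_\infty<\eps$, additionally arranging the matching relation $\phi(1)=0$ (case $BM$), $\phi(0)=\phi(1)=0$ (case $BB$), $\phi(0)=\phi(1)$ (case $CBB$), or nothing (case $CBM$); this is possible by subtracting a suitable affine correction from a polynomial approximant, and here the hypotheses $X(1)=0$, $X(0)=X(1)=0$, $X(0)=X(1)$ are exactly what guarantees the corrected $\phi$ stays $\eps$-close to $X$. Using $TV(f_n)\le1$ gives $|\int_0^1(X-\phi)\,\dd f_n|\le\eps$ and likewise for $f$. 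For the smooth integrand one has the integration-by-parts formula
$$
\int_0^1\phi\,\dd f=\phi(1)f(1)-\phi(0)f(0-)-\int_0^1\phi'(t)f(t)\,\dd t ,
$$
and the boundary term $\phi(1)f_n(1)-\phi(0)f_n(0-)$ vanishes in each case precisely because of the chosen relation for $\phi$ together with the boundary structure of $\KKK_*^1$ (e.g. $f_n(0-)=0$ for $BM$, and $f_n(0-)=f_n(0)=f_n(1)$ for $CBB$). Hence $\int_0^1\phi\,\dd f_n-\int_0^1\phi\,\dd f=-\int_0^1\phi'(f_n-f)\,\dd t\to0$, since $\phi'\in L^2$. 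The triangle inequality then gives $\limsup_n|\Psi(f_n)-\Psi(f)|\le2\eps$, and $\eps\to0$ finishes the argument.

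The main obstacle is conceptual rather than computational: $L^2$-convergence does not control the pointwise boundary values $f_n(1)$ and $f_n(0-)$, since a narrow spike near an endpoint is $L^2$-small yet alters these values, so the boundary terms produced by integration by parts need not converge. The boundary hypotheses on $X$ are exactly the conditions under which $X$ is a uniform limit of smooth $\phi$ whose boundary relation annihilates those uncontrolled terms; the remaining work is the routine case-by-case verification that the required approximant exists and that the boundary contribution cancels.
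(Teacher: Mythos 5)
Your proof is correct, but it takes a genuinely different route from the paper's in both regimes. For $p\in(1,\infty]$ the paper works with the functions themselves: it bounds $TV(f_n)\le\|f_n'\|_p\le 1$, applies Helly's selection theorem to the Stieltjes measures $\mu_n$, upgrades to uniform convergence via the H\"older equicontinuity bound $|f_n(x)-f_n(y)|\le|x-y|^{1/q}$ and Arzel\`a--Ascoli, and then identifies the atomless limit measure with $\dd f$; you instead extract a weakly (weak-$*$) convergent subsequence of the \emph{derivatives} in $L^p$ and identify the limit distributionally, which is cleaner since no boundary bookkeeping arises at all. For $p=1$ the divergence is more substantial: the paper again runs Helly selection and then, case by case, surgically modifies the limit measure at the endpoints (the measures $\mu^{\circ}$, the centering constants $c_{n_i}$), invokes Lemma~\ref{lem:ae_equal_hence_equal_restate} to show the reconstructed function equals $f$ everywhere, and only at the very end uses the hypotheses on $X$ to annihilate the terms $X(0)\mu(\{0\})$ and $X(1)\mu(\{1\})$. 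You avoid compactness entirely: you approximate $X$ uniformly by a smooth $\phi$ satisfying the matched boundary relation, integrate by parts so that the boundary term $\phi(1)f_n(1)-\phi(0)f_n(0-)$ vanishes \emph{identically} on $\KKK_*^1$, and control the error by the uniform bound $TV(f_n)\le 1$. The two proofs are dual in an instructive way: the paper kills the uncontrolled endpoint atoms on the measure side using the boundary values of $X$, while you kill the uncontrolled boundary terms on the test-function side by correcting $\phi$ --- the hypotheses $X(1)=0$, $X(0)=X(1)=0$, $X(0)=X(1)$ enter at exactly the corresponding spot. Your argument buys quantitative directness (no subsequence extraction, no limit identification, no appeal to the a.e.-equality lemma) and even shows that the integral constraints $\int_0^1 f=0$ are irrelevant to continuity, only the jump/boundary structure matters; the paper's construction buys an explicit description of the $L^2$-limit as an element of $\KKK_*^1$, reusing machinery (Helly, Lemma~\ref{lem:ae_equal_hence_equal_restate}) already present in their development. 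Two small points to tighten: in the $p=\infty$ subcase you should say explicitly that weak-$*$ sequential compactness of the unit ball of $L^\infty=(L^1)^*$ uses separability of $L^1[0,1]$ (you do gesture at this), and in the $p=1$ construction of $\phi$ it is worth recording the affine corrections concretely (e.g.\ $\phi(t)=P(t)-tP(1)$ for $*=BM$, $\phi(t)=P(t)-(1-t)P(0)-tP(1)$ for $*=BB$, $\phi(t)=P(t)-t(P(1)-P(0))$ for $*=CBB$, with $P$ a Weierstrass approximant), verifying in each case that the correction is $O(\eps)$ in the sup-norm precisely because of the stated hypotheses on $X$; with those details filled in, the proof is complete.
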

\begin{remark}
Consequently, the right-hand side of~\eqref{eq:isonormal_repr} defines a Gaussian process with continuous sample paths. (Note that the process $X(t):=X_*(1-t)$ satisfies the boundary conditions of Lemma~\ref{lem:continuity}). Thus, the sets $\KKK_*^p$ and $\LLL_*^p$ have the GC-property. In the sequel, we always deal with the version of the isonormal process over $\KKK_*^p$ or $\LLL_*^p$ which is given by the right-hand side of~\eqref{eq:isonormal_repr}.
\end{remark}
%\begin{remark}
%Let us show that the mapping $\Psi$ remains  well defined even if we embed the sets $\KKK_*^p$ and $\LLL_*^p$ into $L^2$. Namely, we show that if $f,g\in \KKK_*^p$ coincide Lebesgue-a.e.,\ then $\Psi(f)=\Psi(g)$. For $p\neq 1$, the functions $f$ and $g$ are continuous and hence, coincide everywhere. Let $p=1$. Then, $f$ and $g$ are right-continuous and hence, coincide everywhere except possibly at $1$. In the case $* = CBM$ (respectively,  $*=CBB$), we have the boundary condition $f(1)=g(1)=0$ (respectively, $f(1)=f(0)=g(0)=g(1)$) and hence, $f$ and $g$ coincide everywhere. In the case $*\in \{BM,BB\}$ the functions $f$ and $g$ may differ at $1$, but due to the assumption $X(1)=0$ we still have $\Psi(f)=\Psi(g)$.
%\end{remark}
\begin{remark}
Let us consider an example showing that the assumptions on $X(0)$ and $X(1)$ in the case $p=1$ cannot be omitted. Consider the sequence
$$
f_n(t) = \ind_{[1-\frac 1n,\infty)}(t) \in \KKK_{BM}^1.
%\quad
%g_n(t) = \ind_{[1-\frac 1n,1]}(t) - \frac 1n\in \KKK_{BB}^1
$$
It converges in $L^2[0,1]$ to $0$. For a continuous function $X$ not satisfying $X(1)=0$ we would have
$$
\lim_{n\to\infty} \Psi(f_n)
=
\lim_{n\to\infty} X\left(1-\frac 1n\right)
%= \lim_{n\to\infty} \Psi(g_n)
=  X(1) \neq 0=\Psi(0).
$$
Similar examples are possible for other values of $*$.
\end{remark}

\begin{proof}[Proof of Lemma~\ref{lem:continuity}]
%The proof is standard.
It suffices to prove the following statement:  For arbitrary $f,f_1,f_2,\ldots \in \KKK_{*}^p$ such that  $f_n\to f$ in the $L^2$-sense as $n\to\infty$,  there is a subsequence $f_{n_i}$ for which $\Psi(f_{n_i})$ converges to $\Psi(f)$, as $i\to\infty$.

\vspace*{2mm}
\noindent
\textsc{Step 1.}
We prove that it is possible to find a subsequence $f_{n_i}$ for which $\Psi(f_{n_i})$ converges to \textit{some} limit. Note that the total variation of the function $f_n$ is bounded by $1$ for every $n$.
For $p=1$ this follows from the definition of $\KKK_*^{1}$, whereas for $p>1$ we have $f_n\in AC[0,1]$ and $TV(f_n)=\|f'_n\|_1 \leq \|f'_n\|_p\leq 1$  by the Lyapunov inequality and the definition of $\KKK_*^p$. The inequality $TV(f_n)\leq 1$, together with the boundary conditions, implies that $\|f_n\|_{\infty} \leq 1$.

We can introduce the signed Lebesgue--Stieltjes measures $\mu_n((s,t]) = f_n(t)-f_n(s)$, $s<t$. The total variation of $\mu_n$ is at most $1$. By Helly's theorem, we can extract a subsequence $\mu_{n_i}$ converging weakly to some signed measure $\mu$, as $i\to\infty$. Note that $\mu$ is concentrated on the interval $[0,1]$. If $p\neq 1$, then we can tell more.  Namely, for every $0\leq x\leq y\leq 1$ by the H\"older inequality we have
$$
|f_n(x)-f_n(y)| = \left|\int_x^y f_n'(s) \dd s\right|
\leq
\|f_n'\|_p \, |y-x|^{1/q}
\leq |y-x|^{1/q}.
$$
By the Arzel\`{a}--Ascoli theorem we can extract a subsequence $f_{n_i}$ which converges uniformly to some continuous function. It follows that $\mu_{n_i}$ converges weakly to some signed measure $\mu$ which has no atoms.

%Consider $BV[0,1]$, the space of c\'adl\'ag functions with bounded variation on $[0,1]$,  as the dual of the Banach space $C[0,1]$. By the Helly theorem, we can extract a subsequence $f_{n_i}$ converging weakly to some function  $g\in BV[0,1]$ with total variation at most $1$.
Since $X$ is a continuous function, it follows from the definition of weak convergence that
$$
\lim_{i\to\infty} \Psi(f_{n_i}) = \lim_{i\to\infty}\int_0^1 X(t) \mu_{n_i}(\dd t) = \int_0^1 X(t) \mu(\dd t).
%\Psi(f_{n_i}) = \int_0^1 X(t) \mu_{n_i}(\dd t) \int_0^1 X(t) \mu_{n_i}(\dd t) = \int_0^1 X(t) \nu(\dd t).
$$
%as $i\to\infty$.

%We can thus introduce the signed Lebesgue--Stieltjes measures $\mu_n$ by $\mu_n([0,t]) = f_n(t) - f_n(0)$.  The total variation of these signed measures is at most $1$. By Helly's theorem we can extract a subsequence $\mu_{n_i}$ converging weakly to some signed measure $\nu$ on $[0,1]$.

\vspace*{2mm}
\noindent
\textsc{Step 2.}
We prove that $\Psi(f)=\int_0^1 X(t)  \mu(\dd t)$.

\vspace*{2mm}
\noindent
In the proof below we consider the case $p=1$.
The proof in the case $p\neq 1$ is similar and, in fact, even much simpler, because in this case the measure $\mu$ has no atoms and therefore we can ignore terms with $\mu(\{0\})$ and $\mu(\{1\})$.

\vspace*{2mm}
\noindent
\textsc{Case $*=BM$.} Define a measure $\mu^{\circ} = \mu  - \delta_1 \mu(\{1\})$, where $\delta_1$ is the delta-measure at $1$. Consider the function $h(t)=\mu^{\circ}((-\infty, t])$. By construction, $h$ is c\'adl\'ag and $h(0-)=0$, $J_h(1)=1$, so that $h$ satisfies the same boundary conditions as the functions from $\KKK_{BM}^1$.  By the definition of the weak convergence, we have
$$
f_{n_i}(t) = \mu_{n_i}((-\infty, t])\to \mu((-\infty, t]) = h(t), \text{ as } i\to\infty,
$$
for all $t\in (0,1)$ where $h$ is continuous.
By the dominated convergence theorem, we obtain that $f_{n_i}$ converges to $h$ in $L^2$. On the other hand, $f_{n_i}$ converges to $f$ in $L^2$. By the uniqueness of the $L^2$-limit, $f$ and $h$ coincide a.e.\ on $[0,1]$. By the same reasoning as in Lemma~\ref{lem:ae_equal_hence_equal_restate}, these functions in fact  coincide everywhere. It follows that
$$
\Psi(f) = \Psi(h)  = \int_0^1 X(t) \dd h(t) = \int_0^1 X(t) \mu(\dd t) - X(1)\mu(\{1\})
=
\int_0^1 X(t) \mu(\dd t),
$$
where the last step holds because we have the assumption $X(1)=0$.

\vspace*{2mm}
\noindent
\textsc{Case $*=CBM$.} Define a function $h(t)=\mu((-\infty, t])$. We have the boundary condition $f_{n_i}(0-)=f_{n_i}(1)=0$ implying that $\mu_{n_i}([0,1])=0$ and hence, $\mu([0,1])=0$. This implies that $h(0-)=h(1)=0$. Also, $h$ is c\'adl\'ag. So, $h$ satisfies the same boundary conditions as the functions from $\KKK_{CBM}^1$. By the definition of the weak convergence, we have
$$
f_{n_i}(t)=\mu_{n_i}((-\infty, t])\to \mu((-\infty, t]) = h(t), \text{ as } i\to\infty,
$$
for all $t\in (0,1)$ where $h$ is continuous. By the dominated convergence theorem, we obtain that $f_{n_i}$ converges to $h$ in $L^2$. On the other hand, $f_{n_i}$ converges to $f$ in $L^2$. By the uniqueness of the $L^2$-limit, $f$ and $h$ coincide a.e.\ on $[0,1]$. By the same reasoning as in Lemma~\ref{lem:ae_equal_hence_equal_restate}, these functions in fact  coincide everywhere. Hence,
$$
\Psi(f) = \Psi(h)  = \int_0^1 X(t) \dd h(t) = \int_0^1 X(t) \mu(\dd t),
$$
as required.

\vspace*{2mm}
\noindent
\textsc{Case $*=BB$.} Define $\mu^{\circ} = \mu - \delta_0 \mu(\{0\}) - \delta_1 \mu(\{1\})$, where $\delta_0$ and $\delta_1$ are delta-measures at $0$ and $1$. Consider the function $h(t)=\mu^{\circ}((-\infty, t]) + c$, where $c$ is a constant chosen such that $\int_0^1 h(t) \dd t = 0$. By construction, $h$ is c\'adl\'ag and satisfies the boundary conditions $J_h(0)=J_h(1)=0$. By the definition of weak convergence, we have $\mu_{n_i}((-\infty, t]) \to \mu ((-\infty, t])$ for every  $t\in (0,1)$ where $h$ is continuous. It follows that with constants $c_{n_i}=f_{n_i}(0-) - c + \mu(\{0\})$ we have $f_{n_i}(t) - c_{n_i} \to h(t)$ for every  $t\in (0,1)$ where $h$ is continuous. Note that the sequence $c_{n_i}$ is bounded. By the dominated convergence theorem, $f_{n_i}-c_{n_i}$ converges to $h$ in $L^2$. On the other hand, $f_{n_i}$ converges to $f$ in $L^2$. It follows that $h-f$ is constant a.e.\ on $[0,1]$. However, since both $f$ and $h$ have vanishing integral over $[0,1]$, they coincide a.e. By the reasoning of Lemma~\ref{lem:ae_equal_hence_equal_restate}, $f$ and $h$ coincide everywhere. It follows that
\begin{align*}
\Psi(f) = \Psi(h)  = \int_0^1 X(t) \dd h(t)
&= \int_0^1 X(t) \mu(\dd t)- X(0)\mu(\{0\})-  X(1)\mu(\{1\})\\
&=\int_0^1 X(t) \mu(\dd t),
\end{align*}
where the last step holds because we have the assumption $X(0)=X(1)=0$.

\vspace*{2mm}
\noindent
\textsc{Case $*=CBB$.} Define a measure $\mu^{\circ} = \mu + \mu(\{0\}) (\delta_1 - \delta_0)$. Consider a function $h(t)=\mu^{\circ}((-\infty, t]) + c$, where $c$ is a constant chosen such that $\int_0^1 h(t) \dd t = 0$. We have the boundary condition $f_{n_i}(0-)=f_{n_i}(1)=0$ which implies that  $\mu_{n_i}([0,1])=0$ and hence, $\mu^{\circ}([0,1])=\mu([0,1])=0$. By construction, $h$ is c\'adl\'ag and satisfies the boundary conditions $h(1)=h(0-)$ and $J_h(0)=0$.
%Let $\Lambda(t)=\lambda((-\infty,t])$ be the distribution function of $\lambda$.
By the definition of the weak convergence, $\mu_{n_i}((-\infty,t]) \to \mu((-\infty,t])$ for every  $t\in (0,1)$ where $h$ is continuous.  Defining the constants $c_{n_i}=f_{n_i}(0-)-c+\mu(\{0\})$, we have $f_{n_i}(t) - c_{n_i} \to h(t)$ for every  $t\in (0,1)$ where $h$ is continuous. By the dominated convergence theorem, $f_{n_i}(t) - c_{n_i}$ converges to $h(t)$ in $L^2$. On the other hand, $f_{n_i}$ converges to $f$ in $L^2$. It follows that $f(t)-h(t) = c$ a.e.\ for  a suitable constant $c\in \R$. However, since  both $f$ and $h$  have vanishing integral over $[0,1]$, we have $f=h$ a.e.\ on $[0,1]$ and, by the reasoning of Lemma~\ref{lem:ae_equal_hence_equal_restate}, even everywhere on $\R$. It follows that
\begin{align*}
\Psi(f) = \Psi(h)  = \int_0^1 X(t) \dd h(t)
&= \int_0^1 X(t) \mu(\dd t) + (X(1)-X(0))\mu(\{0\})\\
&=\int_0^1 X(t) \mu(\dd t),
\end{align*}
where we used the assumption $X(0)=X(1)$.
\end{proof}

\subsection{Proof of Theorem~\ref{prop:mean_width_sobolev}}\label{subsec:proof_prop_mean_width_sobolev}
%\begin{proof}[Proof of Theorem~\ref{prop:mean_width_sobolev}]
Recall that by Lemma~\ref{lem:isonormal_alternative_rep} the isonormal process is given by
$$
\{\xi(f)\colon f\in \KKK_*^p\} = \left\{ \int_0^1 X_*(1-t) \dd f(t) \colon f\in \KKK_*^p\right\}.
$$
Let first $p\in (1,\infty]$.
Then, any $f\in \KKK_*^p$ is absolutely continuous. By the H\"older inequality we have
\begin{equation}\label{eq:wspom_ineq_1}
\left|\int_0^1 X_{*}(1-t) \dd f(t)\right| = \left|\int_0^1 f'(t) X_{*}(1-t) \dd t\right| \leq \|f'\|_p \|X_*\|_q \leq \|X_*\|_q.
\end{equation}
On the other hand, the equality in~\eqref{eq:wspom_ineq_1} is attained if $f=g$, where
$$
g(t) :=
\begin{cases}
\int_0^t \left(\frac{X_*(1-s)}{\|X_*\|_q}\right)^{q-1} \dd s, &\text{if } p\in (1,\infty),\\
\int_0^t \sgn X_*(1-s) \dd s, &\text{if } p=\infty.
\end{cases}
$$
Below we will show that it is possible to modify $g$ such that it satisfies the boundary conditions of $\KKK_*^p$.

But let us first consider the case $p=1$. Then, the total variation of every $f\in \KKK_{*}^{1}$ is at most $1$ and hence,
$$
\left|\int_0^1 X_{*}(1-t) \dd f(t)\right| \leq
\sup_{t\in[0,1]} |X_*(t)|.
$$
The equality is attained if $f=g$, where
$$
g(t) =
\begin{cases}
0, & t <  \arg\max |X_*(1-\cdot)|,\\
1, & t \geq  \arg\max |X_*(1-\cdot)|.
\end{cases}
$$

Let us now show how to modify the minimizer $g$ to make the boundary conditions satisfied. Let $p\in (1,\infty]$.
\begin{case}{$*=BM$.}
Choose $f=g$ since the boundary condition $g(0)=0$ is satisfied.
\end{case}

\begin{case}{$*=BB$.}
Choose $f(t)=g(t)+ a$, where $a$ is a constant  such that $\int_0^1 f(s)\dd s=0$.
\end{case}

\begin{case}{$*=CBM$.}
Choose $f(t)=g(t)+ a + b t$, where $a,b$ are constants  such that $f(0)=f(1)=0$. Note that $\int_0^1 X_*(t)\dd t=0$ (since $X_*$ is the centered Brownian motion) and hence,
\begin{equation}\label{eq:tech1_boundary_cond}
\int_0^1 X_{*}(1-t) \dd f(t)= \int_0^1 X_{*}(1-t) \dd g(t).
\end{equation}
\end{case}

\begin{case}{$*=CBB$.}
Choose $f(t)=g(t)+ a + bt$, where $a,b$ are constants such that $f(0)=f(1)$ and $\int_0^1 f(s)\dd s = 0$.
Note that $\int_0^1 X_*(t)\dd t=0$ (since $X_*$ is the centered Brownian bridge) and hence, \eqref{eq:tech1_boundary_cond} holds.
\end{case}
For $p=1$ the argument is the same, but we have also to note that $J_g(0)=J_g(1)=0$ by definition (since the process $X_*$ does not attain its maximum at $0$ or at $1$). So, the boundary conditions of $\KKK_*^1$ are satisfied.

\vspace*{2mm}
Let us now consider the maximum over $\LLL^p_{*}$. Since every $f\in \LLL^p_{*}$ is monotone non-decreasing, we have
$$
\left|\int_0^1 X_{*}(1-t) \dd f(t)\right| \leq  \left|\int_0^1 \max(X_{*}(1-t),0) \dd f(t)\right|
$$
and one can repeat the same considerations as in cases $*=BM$ and $*=BB$ above with $X_*$ replaced by $\max (X_*,0)$.
\hfill $\Box$
%\end{proof}

\section{Intrinsic volumes of ellipsoids in Hilbert space}\label{sec:V_1_hilbert_ellipsoids}
\subsection{The first intrinsic volume of an ellipsoid}
Consider a separable Hilbert space $H$ over $\R$ with an orthonormal basis $\psi_1,\psi_2,\ldots$. For concreteness, we assume that $H$ is infinite-dimensional, but the same considerations apply in the finite-dimensional case.
Let $\lambda_1,\lambda_2,\ldots$ be a sequence of positive numbers such that $\sum_{n=1}^{\infty} \lambda_n^2 <\infty$. Consider the following subset of $H$:
\begin{equation}\label{eq:def_dispersion_ellipsoid}
\EE = \left\{h = \sum_{n=1}^{\infty} x_n \psi_n \in H \colon \sum_{n=1}^\infty\frac{x_n^2}{\lambda_n^2}\leq 1\right\}.
\end{equation}
Note that $\EE$ is an ellipsoid with half-axes $\lambda_1, \lambda_2, \ldots$.
Let us derive a formula for the Gaussian width and the first intrinsic volume of $\EE$.
\begin{proposition}\label{prop:mean_width_ellipsoid}
Consider the random variable  $M:=\sum_{n=1}^{\infty} \lambda_n^2 N_n^2$, where $N_1,N_2,\ldots$ are i.i.d.\ standard normal random variables. Then, the Gaussian width and the first intrinsic volume of the ellipsoid $\EE$ defined in~\eqref{eq:def_dispersion_ellipsoid} are given by
\begin{equation}\label{eq:V_1_E_sqrt_M}
\wid(\EE)\eqdistr 2\sqrt M,
\quad
V_1(\EE)
%= \sqrt{2\pi} \,  \E \left(\sum_{n=1}^{\infty} \lambda_n^2 N_n^2\right)^{1/2}
=\sqrt{2\pi} \, \E \sqrt{M}.
\end{equation}
\end{proposition}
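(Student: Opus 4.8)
The plan is to reduce both assertions to a single pathwise identity for the supremum of the isonormal process over $\EE$ and then invoke Sudakov's Theorem~\ref{theo:sudakov}. Realize the isonormal process through the given orthonormal basis by setting $N_n:=\xi(\psi_n)$; these are i.i.d.\ standard normal, and for $h=\sum_n x_n\psi_n\in H$ linearity together with $L^2$-continuity of $\xi$ gives $\xi(h)=\sum_n x_n N_n$ (the series converging in $L^2$ since $\sum_n x_n^2<\infty$). Because $\E M=\sum_n\lambda_n^2<\infty$, we have $M<\infty$ almost surely and $\E\sqrt M\le\sqrt{\E M}<\infty$ by Jensen, so in particular $\EE$ is a GB-set.

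The heart of the matter is the claim $\sup_{h\in\EE}\xi(h)=\sqrt M$ a.s. The upper bound is Cauchy--Schwarz: writing $y_n=x_n/\lambda_n$, for every $h\in\EE$,
\[
\xi(h)=\sum_n y_n(\lambda_n N_n)\le\Big(\sum_n y_n^2\Big)^{1/2}\Big(\sum_n\lambda_n^2 N_n^2\Big)^{1/2}\le\sqrt M .
\]
Equality is attained at the explicit point $h^\ast=\sum_n x_n^\ast\psi_n$ with $x_n^\ast=M^{-1/2}\lambda_n^2 N_n$: one checks $\sum_n (x_n^\ast)^2/\lambda_n^2=M^{-1}\sum_n\lambda_n^2N_n^2=1$, so $h^\ast\in\EE$, while $\xi(h^\ast)=M^{-1/2}\sum_n\lambda_n^2 N_n^2=\sqrt M$. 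Since $\EE=-\EE$ and $\xi$ is odd (and has the same law as $-\xi$), the same argument gives $\inf_{h\in\EE}\xi(h)=-\sqrt M$ a.s., whence $\wid(\EE)=2\sqrt M$, the first assertion.

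The subtle point, and the step I expect to be the main obstacle, is that the supremum defining $\wid$ ranges over the \emph{uncountable} set $\EE$, so the pathwise identity above must be read for the separable version of $\xi$ rather than merely coordinatewise, and the maximizer $h^\ast$ is itself random. I would handle this by finite-dimensional truncation. Let $\EE_N$ be the ellipsoid spanned by the first $N$ axes; in finite dimensions the computation is elementary and yields $\sup_{h\in\EE_N}\xi(h)=\sqrt{M_N}$ with $M_N=\sum_{n\le N}\lambda_n^2N_n^2$, the maximizer now being genuinely finite-dimensional. Since $\EE_N$ increase and $\EE=\overline{\bigcup_N\EE_N}$, the separable supremum over $\EE$ dominates each $\sqrt{M_N}$, so letting $N\to\infty$ gives $\sup_{h\in\EE}\xi(h)\ge\sqrt M$; combined with the Cauchy--Schwarz bound applied along a countable dense subset of $\EE$, this pins the supremum down as $\sqrt M$. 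The care needed is purely the measurability and separability bookkeeping, namely that the countable maximum used to define the separable supremum really approaches $\sqrt M$ and that evaluation of the separable process at the random point $h^\ast$ is legitimate.

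Finally, the formula for $V_1$ follows at once: by Sudakov's Theorem~\ref{theo:sudakov}, $V_1(\EE)=\sqrt{2\pi}\,\E\sup_{h\in\EE}\xi(h)=\sqrt{2\pi}\,\E\sqrt M$. To sidestep any delicate statement about the infinite-dimensional supremum altogether, one may instead apply the finite-dimensional Sudakov formula $V_1(\EE_N)=\sqrt{2\pi}\,\E\sqrt{M_N}$ together with the lower semicontinuity of $V_1$ (Proposition~13 of~\cite{vS76}) along $\EE=\overline{\bigcup_N\EE_N}$, and pass to the limit by monotone convergence $\E\sqrt{M_N}\uparrow\E\sqrt M$.
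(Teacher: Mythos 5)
Your proposal is correct and follows essentially the same route as the paper: the Cauchy--Schwarz upper bound $\xi(h)\le\sqrt M$ on $\EE$, the explicit maximizer $x_n=\lambda_n^2 N_n/\sqrt M$ showing the bound is attained, and then Sudakov's Theorem~\ref{theo:sudakov}, with the width identity from the central symmetry of $\EE$. Your additional separability bookkeeping via the finite-dimensional truncations $\EE_N$ and lower semicontinuity of $V_1$ is a sound (and slightly more careful) supplement that the paper silently omits, but it does not change the substance of the argument.
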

\begin{remark}
We have $M<\infty$ a.s.\ since we assume that $\sum_{n=1}^{\infty} \lambda_n^2 <\infty$. Hence, the set $\EE$ is a GB-set.
\end{remark}
\begin{proof}[Proof of Proposition~\ref{prop:mean_width_ellipsoid}]
The isonormal process  $\{\xi(h): h\in H\}$ is given  as follows: For $h = \sum_{n=1}^{\infty} x_n\psi_n\in H$ we have
$$
\xi(h) = \sum_{n=1}^{\infty} x_n N_n.
$$
By the Cauchy--Schwarz inequality and the definition of $\EE$, see~\eqref{eq:def_dispersion_ellipsoid}, we have the estimate
\begin{equation}\label{eq:sup_isonormal_ineq}
\sup_{h\in \EE} \xi(h)
=
\sup_{h \in \EE} \sum_{n=1}^{\infty}\left(\lambda_n N_n  \cdot \frac{x_n}{\lambda_n}\right) \leq  \left(\sum_{n=1}^{\infty} \lambda_n^2 N_n^2\right)^{1/2}=\sqrt M.
\end{equation}
%Note that the random variable  $M=\sum_{n=1}^{\infty} \lambda_n^2 N_n^2$ is a.s.\ finite because $\sum_{n=1}^{\infty}\lambda_n^2<\infty$. In particular, $\EE$ is a $\text{GB}$-set.
On the other hand, for $x_n=\lambda_n^2 N_n/\sqrt M$ an equality is attained in~\eqref{eq:sup_isonormal_ineq}, so that
$$
\sup_{h\in \EE} \xi(h)
% = \left(\sum_{n=1}^{\infty} \lambda_n^2 N_n^2\right)^{1/2}
=\sqrt M.
$$
Thus, by Sudakov's formula, the first intrinsic volume of $\EE$ is given by~\eqref{eq:V_1_E_sqrt_M}.
\end{proof}
\begin{remark}\label{rem:rivin}
\citet{rivin} obtained a formula very similar to Proposition~\ref{prop:mean_width_ellipsoid} for the surface area (which is $2V_{n-1}$) of the ellipsoid. Namely, he showed that for an $n$-dimensional ellipsoid $\EE^*$ with half-axes $1/\lambda_1,\ldots, 1/\lambda_n$, the surface area is given by
\begin{equation}\label{eq:rivin_surface_area_ellipsoid}
2V_{n-1} (\EE^*) = \frac {\sqrt 2} {\lambda_1\ldots \lambda_n} \frac{\pi^{n/2}}{\Gamma\left(\frac{n+1}{2}\right)} \E \sqrt{\lambda_1^2 N_1^2 + \ldots +\lambda_n^2 N_n^2}.
\end{equation}
In fact, Rivin's formula~\eqref{eq:rivin_surface_area_ellipsoid} can be deduced from Proposition~\ref{prop:mean_width_ellipsoid}, see Proposition~\ref{prop:rivin} below. The results on the surface area obtained in Rivin's paper~\cite{rivin} can be translated to the setting of $V_1$.
\end{remark}

\subsection{Special cases: $E_d$ and $F_d$}\label{subsec:V_1_hilbert_special_cases}
We now consider some special cases  in which it is possible to compute $\E \sqrt{M}$ explicitly. Using the formula  $\E \eee^{-tN_n^2} = (1+2t)^{-1/2}$ we obtain that the Laplace transform of $M$ is given by
$$
\E \eee^{-t M} = \prod_{n=1}^{\infty} (1 + 2\lambda_n^2 t)^{-1/2}.
$$

\begin{example} Denote by $E_d$ the ellipsoid whose half-axes are $\frac{1}{n \pi}$, $n\in\N$,
%$$
%\frac{1}{n \pi}, \;\;\; n\in\N,
%$$
where each value has the same multiplicity $d\in \N$.
Then, the Laplace transform of $M$ is given by
\begin{equation}\label{eq:laplace_trans_S_t}
\E \eee^{-t M}
=
\prod_{n=1}^{\infty} \left( 1 + \frac{2t}{n^2\pi^2} \right)^{-d/2}
=
\left(\frac{\sqrt {2t}}{\sinh \sqrt{2t}}\right)^{d /2}.
\end{equation}
Random variables with Laplace transform~\eqref{eq:laplace_trans_S_t} appear frequently in probability theory and were studied in~\cite{biane_etal} and~\cite{pitman_yor}. A generic random variable $M$ with Laplace transform~\eqref{eq:laplace_trans_S_t} is denoted by $S_{d/2}$ in these papers where, among many other results, some moments of $S_1$ and $S_2$ were calculated.  By Proposition~\ref{prop:mean_width_ellipsoid}, the width of $E_d$ is
\begin{equation}\label{eq:width_ellipsoid_S_t}
\wid (E_d) \eqdistr 2 \sqrt{S_{d/2}}.
\end{equation}
It follows from Proposition~\ref{prop:mean_width_ellipsoid} and the results of~\cite{biane_etal} (see, e.g.,\ Table~1  in~\cite{pitman_yor}) that
\begin{equation}\label{eq:mean_width_ellipsoid_S_t}
V_1(E_d) = \sqrt{2\pi} \, \E \sqrt{S_{d/2}} =
\begin{cases}
2\log 2, &\text{ if } d=2,\\
2, &\text{ if } d=4.
\end{cases}
\end{equation}
\end{example}

\begin{example} Denote by $F_d$ the ellipsoid whose half-axes are $\frac{1}{(n-\frac 12)\pi}$, $n\in\N$,
%$$
%\frac{1}{\left(n-\frac 12\right)\pi}, \;\;\; n\in\N,
%$$
where each value has the same multiplicity $d\in\N$.
Then, the Laplace transform of $M$ is given by
\begin{equation}\label{eq:laplace_trans_C_t}
\E \eee^{-t M}
=
\prod_{n=1}^{\infty} \left( 1 + \frac{2t}{\left(n-\frac 12\right)^2\pi^2} \right)^{-d/2}
=
\left(\frac{1}{\cosh \sqrt{2t}}\right)^{d/2}.
\end{equation}
A generic random variable $M$ with Laplace transform~\eqref{eq:laplace_trans_C_t} was denoted by $C_{d/2}$ in~\cite{biane_etal} and~\cite{pitman_yor}. By Proposition~\ref{prop:mean_width_ellipsoid}, the width of $F_d$ is
\begin{equation}\label{eq:width_ellipsoid_C_t}
\wid (F_d) \eqdistr 2 \sqrt{C_{d/2}}.
\end{equation}
From the formulae for $\E C_{d/2}^{1/2}$ derived in~\cite{biane_etal} (see, e.g.,\ Table~1 in~\cite{pitman_yor}) we obtain that
\begin{equation}\label{eq:mean_width_ellipsoid_C_t}
V_1(F_d) = \sqrt{2\pi} \, \E \sqrt{C_{d/2}} =
\begin{cases}
\frac{8 G}{\pi}, &\text{ if } d=2,\\
\frac{28}{\pi^{2}} \zeta(3), &\text{ if } d=4.
\end{cases}
\end{equation}
Here, $G=\sum_{n=0}^{\infty} \frac{(-1)^n}{(2n+1)^2}$ is the Catalan constant and $\zeta(3)=\sum_{n=1}^{\infty} \frac{1}{n^3}$.
%$G=1-\frac 1{3^2} + \frac 1{5^2}-\ldots$ is the Catalan constant.
\end{example}

\subsection{Arbitrary intrinsic volumes of ellipsoids}
Proposition~\ref{prop:mean_width_ellipsoid} can be generalized to higher intrinsic volumes as follows.
\begin{proposition}\label{prop:mean_width_ellipsoid_general}
For every $k\in\N$, the $k$-th intrinsic volume of the ellipsoid $\EE$ defined in~\eqref{eq:def_dispersion_ellipsoid} is given by
$$
V_k (\EE)
=
\frac{(2\pi)^{k/2}}{k!} \E\sqrt{\det W_k},
$$
where $W_k$ is a random $k\times k$-matrix whose $(i,j)$-th entry equals $\sum_{n=1}^\infty\lambda_n^2 N_{n,i} N_{n,j}$, and $\{N_{n,i}: n\in\N, 1\leq i\leq k\}$ are i.i.d.\ standard normal random variables.
\end{proposition}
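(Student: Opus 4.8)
The plan is to apply Tsirelson's Theorem~\ref{theo:tsirelson_spectrum}, which reduces the problem to computing the expected $k$-dimensional volume of the spectrum $\spec_k\EE$. Since $\sum_n \lambda_n^2 < \infty$, the set $\EE$ is the image of the closed unit ball of $H$ under the Hilbert--Schmidt diagonal operator $\psi_n\mapsto \lambda_n\psi_n$, hence compact and convex; being a GB-set by the remark following Proposition~\ref{prop:mean_width_ellipsoid}, it satisfies~\eqref{2042}, that is
$$
V_k(\EE) = \frac{(2\pi)^{k/2}}{k!\,\kappa_k}\,\E\,\Vol_k(\spec_k\EE).
$$

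The \emph{key step} is to identify $\spec_k\EE$ as a random ellipsoid in $\R^k$. Take $k$ independent copies $\xi_1,\ldots,\xi_k$ of the isonormal process, represented over $H$ by $\xi_i(h)=\sum_n x_n N_{n,i}$ for $h=\sum_n x_n\psi_n$. I would reparametrize $\EE$ by $u_n=x_n/\lambda_n$, so that $\EE$ corresponds to the closed unit ball $B\subset H$, and
$$
\xi_i(h)=\sum_n \lambda_n N_{n,i}\,u_n=\langle u, v_i\rangle, \qquad v_i:=(\lambda_n N_{n,i})_n.
$$
Because $\sum_n \lambda_n^2 N_{n,i}^2<\infty$ almost surely, each $v_i$ lies in $H$ a.s.; consequently the map $T\colon u\mapsto(\langle u,v_1\rangle,\ldots,\langle u,v_k\rangle)$ is a.s. a bounded linear operator $H\to\R^k$. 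The Cauchy--Schwarz bound $|\xi_i(h)|\le (\sum_n \lambda_n^2 N_{n,i}^2)^{1/2}$ shows that this version of $(\xi_1,\ldots,\xi_k)$ is continuous on $\EE$, so that $\spec_k\EE$ equals the image $T(B)$ almost surely.

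It then remains to compute $\Vol_k(T(B))$. Writing $T$ in its singular value decomposition shows that the image of the unit ball is the ellipsoid in $\R^k$ whose half-axes are the singular values of $T$, whence $\Vol_k(T(B))=\kappa_k\sqrt{\det(TT^*)}$. Since $T^*e_i=v_i$, the $(i,j)$-entry of $TT^*$ equals $\langle v_i,v_j\rangle=\sum_n\lambda_n^2 N_{n,i}N_{n,j}$, which is precisely the $(i,j)$-entry of $W_k$. Substituting $\Vol_k(\spec_k\EE)=\kappa_k\sqrt{\det W_k}$ into the display above cancels the factor $\kappa_k$ and yields the asserted formula.

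The singular value and Gram-determinant bookkeeping is routine; the only genuine care is infinite-dimensional. One must verify that the random vectors $v_i$ almost surely belong to $H$ (guaranteed by $\sum_n\lambda_n^2<\infty$), that the resulting $T$ is bounded so that $T(B)$ is a genuine ellipsoid, and that this explicit continuous version of $(\xi_1,\ldots,\xi_k)$ on the compact set $\EE$ coincides with the natural modification required in Tsirelson's theorem, so that $\spec_k\EE=T(B)$ a.s. and the passage to expectation is legitimate. As $W_k$ is almost surely positive definite, the degenerate case occurs with probability zero and does not affect the expectation.
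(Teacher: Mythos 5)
Your proposal is correct and is essentially the paper's own argument: the paper likewise applies Tsirelson's Theorem~\ref{theo:tsirelson_spectrum}, reparametrizes $\EE$ as the unit ball and represents $\spec_k\EE$ as the image of that ball under the $k\times\infty$ matrix $A=(\lambda_n N_{n,i})$ (your operator $T$, with rows $v_i$), obtaining $\Vol_k(\spec_k\EE)=\kappa_k\sqrt{\det(AA^T)}=\kappa_k\sqrt{\det W_k}$. The only cosmetic difference is that you extract the volume via the singular value decomposition of $T$, while the paper exhibits $\spec_k\EE$ explicitly as the ellipsoid $\{x\in\R^k\colon \langle (AA^T)^{-1}x,x\rangle\leq 1\}$ after restricting to the span of the rows.
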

\begin{proof}
Recall that $\psi_1,\psi_2,\ldots$ is an orthonormal basis of $H$ and that we represent a vector $h\in H$ in the form $h=\sum_{n=1}^{\infty} x_n\psi_n$.  Define $k$ independent isonormal processes $\{\xi_i(h): h\in \EE\}$, where $1\leq i\leq k$, by
$$
\xi_i(h) = \sum_{n=1}^{\infty} x_n N_{n,i}.
$$
Keeping in mind  Tsirelson's Theorem~\ref{theo:tsirelson_spectrum}, consider the random convex set
$$
\spec_k \EE =\{(\xi_{1}(h), \ldots, \xi_{k}(h))\colon h\in \EE\}\subset  \R^k.
$$
Define a column vector $y=y(h)\in \R^{\infty}$ and a $k\times\infty$ matrix $A$ by
$$
y=\left(\frac{x_1}{\lambda_1},\frac{x_2}{\lambda_2},\ldots\right)^{T}\in \R^{\infty},
\quad
A=\left(\lambda_n N_{n,i}\right)_{n\in \N, 1\leq i\leq k}.
$$
Then, $\|y\|_2\leq 1$ if and only if $h\in \EE$ and we have a representation
$$
\spec_k \EE = \{A y \colon y\in \R^{\infty}, \|y\|_2\leq 1\}.
$$
Denote by $a_1,\dots,a_k$ the row vectors of the matrix $A$:
$$
a_i=\left(\lambda_1 N_{1,i},\lambda_2 N_{2,i},\dots\right)\in \ell^2 \;\;\;\text{a.s.},\quad 1\leq i\leq k,
$$
and let $V$ be the linear span of $\{a_1,\dots,a_k\}$ in the Hilbert space $\ell^2$ of square summable sequences. It holds that $\dim V=k$ and $V^{\bot}=\text{Ker} A$ a.s.  Therefore,
$$
\spec_k \EE = \{A y \colon y\in V, \|y\|_2\leq 1\}.
$$
Any $y\in V$ a.s.\ has a unique representation $y=c_1a_1+\dots+c_ka_k$ and it holds that
$$
\|y\|^2_2=\sum_{i,j=1}^kc_ic_j\langle a_i,a_j\rangle=\langle AA^{T}c,c\rangle, \quad Ay=AA^Tc,
$$
where $c=(c_1,\dots,c_k)^{T}$. It follows that
$$
\spec_k \EE = \{AA^Tc \colon c\in \R^k, \langle AA^{T}c,c\rangle\leq 1\}
= \{x\in \R^k \colon \langle (AA^{T})^{-1}x,x\rangle\leq 1\}.
$$
Thus, $\spec_k \EE$ is an ellipsoid defined by the quadratic form  $AA^{T}$. The volume of an ellipsoid is known (see, e.g., Proposition~\ref{prop:rivin} below for $k=n$), and we obtain
$$
\Vol_k (\spec_k \EE) = \kappa_k \sqrt{\det (A A^{T})} = \kappa_k \sqrt {\det W_k}.
$$
%\begin{align*}
%\det\left(\langle\xi_t^{(i)},\xi_t^{(j)}\rangle\right)_{i,j=1}^k=\frac1\tau_k\Vol_k\left(\left\{(\langle g,\xi_t^{(1)}\rangle,\dots,\langle g,\xi_t^{(k)}\rangle)\in\R^k\,:\,\|g\|\leqslant1\right\}\right)
%\\=\frac1\tau_k\Vol_k\left(\left\{(\langle A^{-1/2}g,\xi_t^{(1)}\rangle,\dots,\langle A^{-1/2}g,\xi_t^{(k)}\rangle)\in\R^k\,:\,\|A^{-1/2}g\|\leqslant1\right\}\right)
%\\=\frac1\tau_k\Vol_k\left(\left\{(\tilde\xi_g^{(1)},\dots,\tilde\xi_g^{(k)})\in\R^k\,:\,g\in\mathcal{E}\right\}\right).
%\end{align*}
The proof is completed by applying Tsirelson's Theorem~\ref{theo:tsirelson_spectrum}.
%On the other hand, it follows from \eqref{2042} that
%$$
%V_k(\mathcal{E})=\frac{(2\pi)^{k/2}}{k!\kappa_k}\E\,\Vol_k\left(\left\{(\tilde\xi_g^{(1)},\dots,\tilde\xi_g^{(k)})\in\R^k\,:\,g\in\mathcal{E}\right\}\right).
%$$
%Combining the last two relations, we obtain
\end{proof}
\begin{remark}
In the finite-dimensional case, Proposition~\ref{prop:mean_width_ellipsoid_general} was obtained in \cite{KZ12}.
\end{remark}

The next proposition states a duality between $V_k$ and $V_{n-k}$ for ellipsoids. It explains Remark~\ref{rem:rivin}.  Let $\Sigma$ be a symmetric, positive definite $n\times n$ matrix. Consider the following two ellipsoids in $\R^n$:
$$
\EE = \{x\in \R^n\colon \langle x, \Sigma^{-1} x\rangle \leq 1\},\;\;\;
\EE^* = \{x\in \R^n\colon \langle x, \Sigma x\rangle \leq 1\}.
$$
If the half-axes of $\EE$ are $\lambda_1,\ldots,\lambda_n$, then the half-axes of $\EE^*$ are $1/\lambda_1,\ldots,1/\lambda_n$.
\begin{proposition}\label{prop:rivin}
For every $0\leq k\leq n$ it holds that
$$
V_k(\EE) = |\det \Sigma|^{1/2} \frac{\kappa_k}{\kappa_{n-k}} V_{n-k} (\EE^*).
$$
\end{proposition}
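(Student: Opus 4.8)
The plan is to reduce the claim to Kubota's formula~\eqref{eq:kubota} combined with a determinant identity for compressions of $\Sigma$. Since $\binom{n}{k}=\binom{n}{n-k}$ and the prefactor $\frac{\kappa_n}{\kappa_k\kappa_{n-k}}$ is symmetric in $k$ and $n-k$, Kubota's formula expresses $V_k(\EE)$ and $V_{n-k}(\EE^*)$ through the \emph{same} combinatorial factor times the averaged projection volumes $\E\,\Vol_k(\EE|L)$ and $\E\,\Vol_{n-k}(\EE^*|L')$, where $L$ (resp.\ $L'$) ranges uniformly over $k$-dimensional (resp.\ $(n-k)$-dimensional) linear subspaces, and $\EE|L$ denotes the orthogonal projection of $\EE$ onto $L$. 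The key observation is that the map $L\mapsto L^{\perp}$ sends a uniform random $k$-subspace to a uniform random $(n-k)$-subspace, so it suffices to establish the pointwise identity $\Vol_k(\EE|L)=\frac{\kappa_k}{\kappa_{n-k}}|\det\Sigma|^{1/2}\,\Vol_{n-k}(\EE^*|L^{\perp})$ for every fixed $k$-subspace $L$, and then take expectations.

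First I would compute the two projection volumes via support functions. The support function of $\EE$ is $h_{\EE}(u)=\langle u,\Sigma u\rangle^{1/2}$, and its restriction to $L$ is the support function of $\EE|L$. For $u\in L$ one has $\langle u,\Sigma u\rangle=\langle u,\Sigma_L u\rangle$, where $\Sigma_L:=P_L\Sigma P_L|_L$ is the compression of $\Sigma$ to $L$ and $P_L$ is the orthogonal projection onto $L$; hence $\EE|L$ is the ellipsoid in $L$ defined by $\Sigma_L$, with $\Vol_k(\EE|L)=\kappa_k\sqrt{\det\Sigma_L}$. Applying the same computation to $\EE^*$, whose support function is $\langle u,\Sigma^{-1}u\rangle^{1/2}$, gives $\Vol_{n-k}(\EE^*|L^{\perp})=\kappa_{n-k}\sqrt{\det((\Sigma^{-1})_{L^{\perp}})}$, where $(\Sigma^{-1})_{L^{\perp}}$ is the compression of $\Sigma^{-1}$ to $L^{\perp}$.

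The heart of the argument, and the step I expect to be the main obstacle, is the determinant identity $\det\Sigma_L=\det\Sigma\cdot\det((\Sigma^{-1})_{L^{\perp}})$. Choosing coordinates so that $L$ is spanned by the first $k$ basis vectors and writing $\Sigma$ in block form with diagonal blocks $A=\Sigma_L$ and $C$ (the compression of $\Sigma$ to $L^{\perp}$) and off-diagonal block $B$, the Schur complement yields $\det\Sigma=\det A\cdot\det(C-B^{T}A^{-1}B)$, while the bottom-right block of $\Sigma^{-1}$ equals $(C-B^{T}A^{-1}B)^{-1}=(\Sigma^{-1})_{L^{\perp}}$; taking determinants gives the identity. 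Substituting it into the two volume formulas produces the desired pointwise relation $\Vol_k(\EE|L)=\frac{\kappa_k}{\kappa_{n-k}}|\det\Sigma|^{1/2}\,\Vol_{n-k}(\EE^*|L^{\perp})$. Averaging over $L$ and invoking Kubota's formula for both $V_k(\EE)$ and $V_{n-k}(\EE^*)$ as above then yields $V_k(\EE)=|\det\Sigma|^{1/2}\frac{\kappa_k}{\kappa_{n-k}}V_{n-k}(\EE^*)$. As a sanity check, the extreme cases $k=0$ and $k=n$ reduce to $V_0=1$ and $V_n(\EE)=\kappa_n|\det\Sigma|^{1/2}$, both consistent with the formula.
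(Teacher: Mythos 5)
Your proof is correct, but it takes a genuinely different route from the paper. The paper's proof is a three-line mixed-volume computation: it writes $V_k(\EE)=\frac{\binom nk}{\kappa_{n-k}}V(\EE,\ldots,\EE,B_n,\ldots,B_n)$, picks the symmetric positive definite $U$ with $\Sigma^{-1}=U^2$ (so that $U(\EE)=B_n$ and $U(B_n)=\EE^*$), and applies the transformation rule $V(UK_1,\ldots,UK_n)=|\det U|\,V(K_1,\ldots,K_n)$, which simultaneously swaps the roles of the ball and the ellipsoid and produces the factor $|\det U|^{-1}=|\det\Sigma|^{1/2}$. You instead average over the Grassmannian: Kubota's formula~\eqref{eq:kubota} on both sides (its prefactor being symmetric under $k\leftrightarrow n-k$, and $L\mapsto L^{\perp}$ preserving the uniform distribution), reduced to the pointwise identity $\Vol_k(\EE|L)=\frac{\kappa_k}{\kappa_{n-k}}|\det\Sigma|^{1/2}\Vol_{n-k}(\EE^*|L^{\perp})$, which you verify via support functions ($\EE|L$ is the ellipsoid of the compression $\Sigma_L$) and the Schur-complement identity $\det\Sigma_L=\det\Sigma\cdot\det\bigl((\Sigma^{-1})_{L^{\perp}}\bigr)$; all of these steps are sound, and your separate check of the boundary cases $k=0,n$ is needed since~\eqref{eq:kubota} is stated for $1\leq m\leq n$. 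The paper's argument buys brevity and conceptual transparency (the duality \emph{is} the linear map $U$ acting on a mixed volume), at the cost of invoking mixed-volume machinery; your argument is more elementary (only Kubota plus linear algebra) and actually proves a strictly stronger statement — an exact identity between $\Vol_k(\EE|L)$ and $\Vol_{n-k}(\EE^*|L^{\perp})$ for \emph{every fixed} subspace $L$, the ellipsoid instance of projection--section (polar) duality, of which the proposition is the average.
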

\begin{proof}
It is known that there exists a symmetric positive definite matrix denoted by $U$ such that $\Sigma^{-1}=U^2$.
Let $B_n$ be the unit ball in $\R^n$. Then, considering $U$ as a linear operator on $\R^n$, we have
$$
U(\EE) = B_n, \;\;\; U(B_n) = \EE^*.
$$
Intrinsic volumes are a special case of mixed volumes, see, e.g.,~\cite[Section~14.2]{schneider_weil_book}:
\begin{equation}
V_k(\EE)
= \frac{\binom{n}{k}}{\kappa_{n-k}} V(\underbrace{\EE,\ldots,\EE}_k, \underbrace{B_n,\ldots,B_n}_{n-k}).
\end{equation}
Applying the linear transformation $U$ to the mixed volumes, we obtain
\begin{align*}
V_k(\EE)
&= |\det U|^{-1} \frac{\binom{n}{k}}{\kappa_{n-k}} V(\underbrace{U(\EE),\ldots,U(\EE)}_k, \underbrace{U(B_n),\ldots,U(B_n)}_{n-k})\\
&= |\det \Sigma|^{1/2} \frac{\binom{n}{k}}{\kappa_{n-k}} V(\underbrace{B_n,\ldots,B_n}_k, \underbrace{\EE^*,\ldots,\EE^*}_{n-k})\\
&= |\det \Sigma|^{1/2} \frac{\kappa_k}{\kappa_{n-k}} V_{n-k}(\EE^*).
\end{align*}
This is the desired formula.
\end{proof}
%\section{Intrinsic volumes of TV balls and applications to Brownian convex hulls}\label{sec:BV_Balls}

%\section{Intrinsic volumes of Lipschitz balls and Brownian zonoids} \label{sec:Lipschitz}

\section{Special cases $p=1,2,\infty$}

\subsection{The Gaussian width of $\KKK_*^1$ and $\LLL_*^1$}\label{subsec:BV_Balls_V1}
Here we consider the case $p=1$. Recall that, essentially,  the set $\KKK_{*}^{1}$ consists of functions on $[0,1]$ whose total variation is bounded by $1$, with additional boundary or integral conditions. In the set $\LLL_*^1$ we additionally require the functions to be monotone non-decreasing. Applying Theorem~\ref{prop:mean_width_sobolev} (with $q=\infty$) and~\eqref{eq:width_def} we obtain the distribution of the Gaussian width of $\KKK_{*}^1$ and $\LLL_*^1$:
\begin{align}
&\wid (\KKK_{BM}^1) \eqdistr 2\sup_{t\in[0,1]} |W(t)|  \eqdistr \frac{2}{\sqrt{C_1}},\label{eq:wid_1}\\
&\wid (\LLL_{BM}^1) \eqdistr \sup_{t\in[0,1]} W(t) - \inf_{t\in[0,1]} W(t) \eqdistr \frac{2}{\sqrt{C_2}},\label{eq:wid_2}\\
&\wid (\KKK_{BB}^1) \eqdistr 2\sup_{t\in [0,1]} |X_{BB}(t)|  \eqdistr \pi \sqrt{S_1},\label{eq:wid_3}\\
&\wid (\LLL_{BB}^1) \eqdistr \sup_{t\in[0,1]} X_{BB}(t) - \inf_{t\in[0,1]} X_{BB}(t) \eqdistr \frac{\pi} 2 \sqrt{S_2},\label{eq:wid_4}
\end{align}
where the known characterizations of the distribution of the supremum  and the range of the Brownian motion and the Brownian bridge in terms of the distributions $S_1,S_2,C_1,C_2$ were used; see~\cite{biane_etal}. Note that $\frac 12 \wid(\KKK_{BB}^1)$ has the Kolmogorov--Smirnov distribution, whereas $\wid(\LLL_{BB}^1)$ has the limiting distribution of the Kuiper's test.   By comparing~\eqref{eq:wid_3} and~\eqref{eq:wid_4} with~\eqref{eq:width_ellipsoid_S_t}, we obtain the following distributional identities
$$
\wid (\KKK_{BB}^1) \eqdistr \frac {\pi}{2}\wid (E_2),
\quad
\wid (\LLL_{BB}^1) \eqdistr \frac {\pi}{4}\wid (E_4).
$$
Trying to explane these strange coincidences, one may conjecture that there is an isometry between the corresponding sets. As a support of this conjecture, one can show that
$$
\diam (\KKK_{BB}^1) = \frac {\pi}{2} \diam  (E_2) = \frac 12,
\quad
\diam (\LLL_{BB}^1) = \frac {\pi}{4} \diam (E_4) = \frac 14,
$$
where $\diam (T) = \sup_{x,y\in T} \|x-y\|_2$.
However, the conjecture is not true.
\begin{proposition}\label{prop:no_isometry}
Equipped with the $L^2$-metric, the sets $\KKK_{BB}^1$ and $\frac {\pi}{2} E_2$ are not isometric. Similarly, the sets $\LLL_{BB}^1$ and $\frac {\pi}{4} E_4$ are not isometric.
\end{proposition}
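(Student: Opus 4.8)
The plan is to promote any metric isometry to an affine isometry of the ambient Hilbert space and then to separate the two bodies by the affine invariant of strict convexity. Suppose, for contradiction, that $\Phi\colon\frac\pi2 E_2\to\KKK_{BB}^1$ is a surjective isometry. I would first invoke the standard rigidity of distance-preserving maps of subsets of a Hilbert space: for a fixed base point $p$, the map $z\mapsto\Phi(z+p)-\Phi(p)$ on $\frac\pi2 E_2-p$ fixes the origin and, by the polarization identity $\langle u,v\rangle=\frac12(\|u\|_2^2+\|v\|_2^2-\|u-v\|_2^2)$, preserves inner products; extending it linearly over the span of $\frac\pi2 E_2-p$ and then continuously over the closure produces a linear isometry onto the closed linear span of $\KKK_{BB}^1-\Phi(p)$. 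Consequently $\Phi$ is the restriction of a genuine affine isometry of $L^2$, and $\KKK_{BB}^1$ is the image of $\frac\pi2 E_2$ under it.

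Now strict convexity is preserved by affine isometries, since such maps send segments to segments. The ellipsoid $\frac\pi2 E_2$ is strictly convex, because it is a sublevel set of the positive-definite (hence strictly convex) quadratic form $\sum_n x_n^2/\lambda_n^2$, so its boundary contains no non-degenerate segment. Thus it suffices to prove that $\KKK_{BB}^1$ is \emph{not} strictly convex, i.e.\ that its boundary contains a segment; this contradicts the conclusion of the previous paragraph. The same scheme, verbatim, handles the pair $\LLL_{BB}^1$ and $\frac\pi4 E_4$.

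To exhibit a boundary segment I would produce a continuous linear functional whose face of maximizers is non-degenerate. By the computation in the proof of Theorem~\ref{prop:mean_width_sobolev}, the relevant functionals on $\KKK_{BB}^1$ have the form $f\mapsto\int_0^1 X(t)\,\dd f(t)=-\langle f,X'\rangle_{L^2}$, a bounded linear functional on $L^2$ as soon as $X$ is smooth and satisfies the boundary conditions $X(0)=X(1)=\int_0^1 X=0$ of Lemma~\ref{lem:continuity}. Choosing such an $X$ that attains its maximum $\|X\|_\infty$ exactly on a subinterval $[a,b]\subset(0,1)$, the sharp bound $\int_0^1 X\,\dd f\le\|X\|_\infty\,TV(f)\le\|X\|_\infty$ is attained simultaneously by every centered unit-jump function whose jump lies inside $[a,b]$, and these are distinct elements of $\KKK_{BB}^1$; hence the exposed face is at least a segment. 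For $\LLL_{BB}^1$ one uses in addition that these maximizing jump functions are non-decreasing, so they lie in $\LLL_{BB}^1$. The main obstacle is the bookkeeping in this last step: one must check that the competing c\`adl\`ag step functions genuinely satisfy every defining constraint (in particular $\int_0^1 f=0$ and $J_f(0)=J_f(1)=0$), remain distinct in $L^2$, and that $X$ lies in the space over which the functional is defined, so that the face is truly exposed and truly non-degenerate. Everything else — the rigidity lemma and the strict convexity of the ellipsoid — is routine.
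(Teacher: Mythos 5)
Your argument is correct in substance but follows a genuinely different route from the paper after the first step. Both proofs begin by upgrading the metric isometry to an affine map; the paper simply asserts this ("it must be affine"), while your polarization/rigidity argument actually proves it, so that part of your write-up supplies a detail the paper omits. From there, however, the paper classifies \emph{all} extreme points of $\LLL_{BB}^1$ (the jump functions $f_\alpha$ and $0$, via the identification of $\LLL_{BB}^1$ with the image of the set of subprobability measures on $(0,1)$) and derives the contradiction globally: the extreme points of $\LLL_{BB}^1$ form a set homeomorphic to a circle, whereas the extreme points of the ellipsoid form its infinite-dimensional ``sphere'' $\{\sum x_n^2/\lambda_n^2=1\}$, and an affine bijection would have to carry one set homeomorphically onto the other. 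You instead use a single exposed face: the functional $f\mapsto\int_0^1 X(t)\,\dd f(t)=-\langle f,X'\rangle_{L^2}$, with $X\geq 0$ smooth, $X(0)=X(1)=0$, attaining its maximum exactly on $[a,b]\subset(0,1)$, is maximized over $\KKK_{BB}^1$ simultaneously by all $f_\alpha(t)=\ind_{[\alpha,\infty)}(t)-(1-\alpha)$ with $\alpha\in[a,b]$ (these are non-decreasing and satisfy $TV(f_\alpha)=1$, $\int_0^1 f_\alpha=0$, $J_{f_\alpha}(0)=J_{f_\alpha}(1)=0$, so they also lie in $\LLL_{BB}^1$), while every continuous affine functional that is non-constant on an ellipsoid with strictly positive half-axes has a \emph{unique} maximizer, by positive definiteness of the defining quadratic form. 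Your route is more economical — one well-chosen functional instead of the full extreme-point classification — while the paper's argument yields more structural information about the bodies.

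Two slips to repair, neither fatal. First, the framing via ``strict convexity'' and ``boundary contains no segment'' is vacuous here: both bodies are compact convex subsets of the infinite-dimensional $L^2$, hence have empty interior, and their topological boundaries coincide with the sets themselves, so \emph{every} segment in $\KKK_{BB}^1$ lies in its boundary and the same is true for the ellipsoid. Your actual mechanism — comparing exposed faces, which are preserved under surjective affine isometries (transport the functional through the linear part of the isometry, and exclude the trivial case of a functional constant on the ellipsoid; yours is non-constant since it takes the values $\|X\|_\infty$ at $f_\alpha$ and $0$ at $f=0$) — is the correct affine invariant, and the proof should be phrased in those terms throughout. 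Second, Lemma~\ref{lem:continuity} for $*=BB$ requires only $X(0)=X(1)=0$, not $\int_0^1 X(t)\,\dd t=0$; you must drop the latter condition, which is incompatible with $X\geq 0$, and it is not needed: the continuity of the functional on $L^2$ follows directly from the integration-by-parts identity $\Psi(f)=-\langle f,X'\rangle_{L^2}$, without any appeal to the lemma.
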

\begin{proof}
Suppose that there is an isometry $\varphi$ between $\LLL_{BB}^1$ and $\frac {\pi}{4} E_4$. From the isometric property of $\varphi$ it follows that it must be affine, that is $\varphi(tx+(1-t)y)=t\varphi(x)+(1-t)\varphi(y)$ for all $x,y\in \LLL_{BB}^1$ and $t\in [0,1]$.  In particular, $\varphi$ is the homeomorphism between the sets of extreme points of the convex sets $\LLL_{BB}^1$ and $\frac {\pi}{4} E_4$ endowed with the induced $L^2$-topology. The extreme points of $\LLL_{BB}^1$  are the functions
$$
f_{\alpha}(t) = (\alpha-1) \ind_{(-\infty,\alpha)}(t) + \alpha \ind_{[\alpha,\infty)}(t), \quad \alpha\in (0,1),
$$
and the zero function. To see this, note that $\LLL_{BB}^1$ is the image of the convex set $S$ of all (non-negative) measures $\mu$ on $(0,1)$ with  $\mu((0,1))\leq 1$ under the map $A$ which maps $\mu\in S$ to the function $t\mapsto \mu([0,t]) - \int_0^1 \mu([0,s])\dd s$. The extreme points of $S$ are the Dirac measures $\delta_{\alpha}$, $\alpha\in (0,1)$, and the zero measure. Since $A$ is affine and bijective, the extreme points of $\LLL_{BB}^1$ are $f_{\alpha}=A\delta_{\alpha}$ and $0$. Note that $A\delta_{\alpha}\to 0$ (in $L^2$) as $\alpha\to 0$ or $\alpha\to 1$.

So, the set of extreme points of $\LLL_{BB}^1$ is homeomorphic to the circle. The set of extreme points of $\frac {\pi}{4} E_4$ is the boundary of $\frac {\pi}{4} E_4$.
Clearly, these sets of extreme points are not homeomorphic (one of them is infinite-dimensional while the other is not), thus proving the absence of isometry between $\LLL_{BB}^1$ and $\frac {\pi}{4} E_4$.

Similarly, the extreme points of the convex set $\KKK_{BB}^1$ are the functions $f_{\alpha}$, $-f_{\alpha}$, $\alpha\in (0,1)$. Again, there is no homeomorphism between the sets of extreme points of $\KKK_{BB}^1$ and $\frac {\pi}{2} E_2$.
\end{proof}

\begin{example}
Recall Sudakov's Theorem~\ref{theo:sudakov}:
$$
V_1(T)= \sqrt{ \frac {\pi}{2} }\, \E \wid(T).
$$
Applying this to~\eqref{eq:wid_1}--\eqref{eq:wid_4} and using the identities
$$
\sqrt{2\pi}\, \E C_1^{-1/2} = \pi,\;\;
\sqrt{2\pi}\, \E C_2^{-1/2} = 2,\;\;
\sqrt{2\pi}\, \E S_1^{1/2} = 2\log 2,\;\;
\sqrt{2\pi}\, \E S_2^{1/2} = 2,
$$
see~\cite[Equation~(56)]{pitman_yor} and~\eqref{eq:mean_width_ellipsoid_S_t}, we obtain the formulae for the first intrinsic volumes stated in Proposition~\ref{prop:V1_p_1}.
\end{example}

\subsection{The first intrinsic volume of $\KKK_*^2$}
By Theorem~\ref{theo:V_1_sobolev}, the first intrinsic volume of $\KKK_*^2$ can be related to the expected $L^2$-norm of the process $X_*$ as follows:
$$
V_1(\KKK_*^2) = \sqrt {2\pi}\, \E \left(\int_{0}^1 X_*^2(t)\dd t\right)^{1/2}.
$$
The distribution of the squared $L^2$-norm of $X_*$ has been much studied (see, e.g.,~\cite{biane_etal}). Using the Karhunen--Loeve expansion of the Gaussian process $X_*$ it can be expressed as the weighted $\chi^2$-distribution with weights which are characterized in terms if the eigenvalues of the covariance operator of $X_*$. In our cases, the distribution of the squared $L^2$-norm is of the form $S_d$ or $C_d$; see Section~\ref{subsec:V_1_hilbert_special_cases}. Moreover, using the same method we will show that $\KKK_*^2$ is isometric to an ellipsoid of the form $E_d$ or $F_d$.

%We can compute the first intrinsic volume of the Sobolev balls $\KKK_*^2$ using the fact that  they  are isometric to $E_d$ or $F_d$.
Let us first introduce a $d$-dimensional generalization of $\KKK_*^2$ as follows. Denote by $AC^d[0,1]$ the set of absolutely continuous functions $f:[0,1]\to\R^d$. Define
$$
\KKK^{2,d} =  \left\{f=(f_1,\ldots,f_d)\in AC^d[0,1]\colon f_1',\ldots,f_d'\in L^2[0,1], \sum_{i=1}^d \|f_i'\|_2^2\leq 1\right\}.
$$
Then, define $\KKK^{2,d}_*$ for all admissible values of $*$ by imposing on each component $f_1,\ldots,f_d$ the same boundary conditions as in Section~\ref{sec:sobolev_balls}. Note that $\KKK_{BM}^{2,d}$ is the Strassen ball (of the $d$-dimensional Brownian motion) which appears for example in the functional law of the iterated logarithm.
\begin{proposition}\label{prop:sobolev_ellips_p_2}
In the $L^2$-metric,
\begin{enumerate}
\item $\KKK_{BM}^{2,d}$ is isometric to $F_{d}$;
\item $\KKK_{BB}^{2,d}$ and $\KKK_{CBM}^{2,d}$ are isometric to $E_d$;
\item $\KKK_{CBB}^{2,d}$ is isometric to $\frac 12 E_{2d}$.
\end{enumerate}
\end{proposition}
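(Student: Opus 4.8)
The plan is to realize each set $\KKK_*^{2,d}$ as the image of a Hilbert-space unit ball under an explicit linear operator, so that it becomes an ellipsoid whose half-axes are the singular values of that operator, and then to read those singular values off from a Sturm--Liouville eigenvalue problem. I would open with the elementary but decisive remark that the image $\{Bg\colon\|g\|_2\le 1\}$ of the closed unit ball of a Hilbert space under a Hilbert--Schmidt operator $B$ into $L^2$ is an ellipsoid (lying in the closure of the range of $B$) whose half-axes are exactly the singular values of $B$, counted with multiplicity. Since all the operators below have square-summable singular values, these are genuine ellipsoids in the sense of~\eqref{eq:def_dispersion_ellipsoid}, and two such centred ellipsoids in $L^2$ are isometric precisely when their multisets of half-axes coincide; this half-axis spectrum is a metric invariant, and alternatively one simply builds the linear isometry sending one orthonormal eigenbasis to the other.

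The $d$-dimensional case reduces to $d=1$ at once. Writing $g=f'$, the constraint $\sum_{i=1}^d\|f_i'\|_2^2\le 1$ together with the componentwise boundary conditions exhibits $\KKK_*^{2,d}$ as the image of the unit ball of the $d$-fold product of the single-component domain under the direct sum $B_*\oplus\cdots\oplus B_*$, where $B_*$ is the one-component operator. The singular values of a direct sum are the union, with multiplicities added, of the singular values of the summands, so each half-axis of the one-component ellipsoid is repeated $d$ times. This is exactly what converts $F_1,E_1,E_1,\tfrac12 E_2$ into $F_d,E_d,E_d,\tfrac12 E_{2d}$, so it suffices to treat $d=1$ and multiply all multiplicities by $d$.

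For $d=1$ I would substitute $g=f'$ and absorb the boundary and integral conditions into the operator. For $*=BM$ the condition $f(0)=0$ gives $f=Ig$ with $I$ the Volterra operator $(Ig)(t)=\int_0^t g$; for $*=CBM$ the condition $f(0)=f(1)=0$ additionally forces $\int_0^1 g=0$, so the domain is the subspace $L^2_0=\{g\colon\int_0^1 g=0\}$; the integral condition $\int_0^1 f=0$ (cases $BB,CBB$) is handled by subtracting the mean, i.e.\ composing $I$ with the orthogonal projection $J=\mathrm{id}-P$ onto $L^2_0$; and $CBB$ combines both devices. I would then extract the singular values from the self-adjoint operator $B_*B_*^*$ (or $B_*^*B_*$, whichever is convenient): differentiating the integral eigenvalue equation twice always yields $g''=-\lambda^{-2}g$, while the endpoints of the integral operator supply the boundary conditions. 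These come out as $g'(0)=0,\ g(1)=0$ for $BM$ (eigenfrequencies $(n-\tfrac12)\pi$, half-axes $\tfrac{1}{(n-\frac12)\pi}$, giving $F_1$), as Neumann conditions $g'(0)=g'(1)=0$ on $L^2_0$ for both $CBM$ and $BB$ (eigenfrequencies $n\pi$, half-axes $\tfrac{1}{n\pi}$, giving $E_1$), and as periodic conditions for $CBB$. The $CBB$ case is cleanest in the Fourier basis $e_n(t)=\eee^{2\pi i nt}$ of $L^2_0$: one checks directly that $B_{CBB}e_n=\tfrac{1}{2\pi i n}e_n$, so the half-axes are $\tfrac{1}{2\pi|n|}$ with $n\in\Z\setminus\{0\}$, that is $\tfrac{1}{2\pi m}$ each with multiplicity two, which is precisely $\tfrac12 E_2$.

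The main obstacle is bookkeeping rather than hard analysis: one must translate each of the four boundary/integral conditions into the correct domain restriction and the correct self-adjoint boundary condition, and above all get the multiplicities right. The delicate case is $CBB$, where the simultaneous domain restriction ($\int_0^1 g=0$) and range mean-subtraction force periodic boundary conditions and hence doubly degenerate eigenvalues; it is exactly this doubling that produces $E_{2d}$ rather than $E_d$, while the factor $\tfrac12$ comes from the frequency $2\pi n$ replacing $n\pi$. The only other point demanding care is the isometry criterion of the first step, which one secures either by observing that the half-axis spectrum of a centred Hilbert-space ellipsoid is determined by its metric, or by explicitly assembling the linear isometry from the Sturm--Liouville eigenbasis to the standard basis realizing $E_d$ or $F_d$.
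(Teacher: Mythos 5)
Your proof is correct, and it reaches the paper's conclusion by a dual route. The paper expands the function $\psi\in\KKK_*^2$ itself in the eigenbasis of the Laplace operator with the boundary conditions carried by $\psi$ (mixed $f(0)=f'(1)=0$ for $BM$, Dirichlet for $CBM$, Neumann for $BB$, periodic for $CBB$) and reads the half-axes directly off the coefficient form of the constraint $\|\psi'\|_2^2\leq 1$; you instead work on the derivative side $g=f'$, exhibit each set as the image of a Hilbert ball under an explicit operator $B_*$ built from the Volterra operator $I$, the restriction to $L^2_0$ and the mean-subtraction projection $J$, and extract the half-axes as singular values via the Sturm--Liouville problem for $B_*^*B_*$ or $B_*B_*^*$. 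These are Laplace eigenproblems with dual boundary conditions and produce the same frequency multisets, so the results agree; your packaging makes explicit two points the paper leaves implicit, namely why matching half-axis multisets yields an isometry of centred ellipsoids, and the clean direct-sum argument for why passing to $d$ components multiplies every multiplicity by $d$ (the paper merely says one expands the components separately), at the cost of some operator-theoretic overhead (image-of-ball lemma, closedness of the image of the ball under a compact operator, and the verification, implicit in your substitution $g=f'$, that $B_*$ maps the unit ball \emph{onto} $\KKK_*^2$). One small caution: your claim that both $CBM$ and $BB$ lead to Neumann conditions on $L^2_0$ is accurate only if for $BB$ you diagonalize the range-side operator $B_{BB}^{}B_{BB}^*$, whose eigenfunctions are $\cos(n\pi t)$; the domain-side problem $B_{BB}^*B_{BB}^{}$ has eigenfunctions $\sin(n\pi t)$, with a Dirichlet condition at $1$ and a nonlocal condition at $0$ coming from the mean subtraction, so your parenthetical ``whichever is convenient'' is doing real work there --- but either choice yields eigenfrequencies $n\pi$ and hence $E_d$, so no conclusion is affected.
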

\begin{proof}
The proof uses characterization of Sobolev balls with $p=2$ in terms of Karhunen--Loeve expansions. Let first $d=1$.
\begin{case}{$*=BM$.} Every real-valued function $\psi \in L^2$ has an orthonormal expansion of the form
$$
\psi(t)=\sum_{k=1}^{\infty} a_k \sqrt 2 \sin\left(\left(k-\frac 12\right)\pi t\right).
$$
This is an expansion in terms of the eigenfunctions of the Laplace operator with boundary conditions $f(0)=f'(1)=0$. We have $\psi\in \KKK_{BM}^2$ if and only if
$$
\sum_{k=1}^{\infty} \pi^2 \left(k-\frac 12\right)^2 a_k^2 \leq 1,
$$
thus establishing  the isometry between $\KKK_{BM}^2$ and $F_1$.
%In the case of $\KKK_{BM}^{2,d}$, one has to expand the components of $\psi$ separately.
\end{case}

\begin{case}{$*=CBM$.} We can write any function $\psi\in L^2$ in the form
$$
\psi(t)=\sum_{k=1}^{\infty} a_k \sqrt 2 \sin (k\pi t).
$$
These are the eigenfunctions of the Laplace operator with Dirichlet boundary conditions $f(0)=f(1)=0$. We have $\psi\in \KKK_{CBM}^2$ if and only if
$
\sum_{k=1}^{\infty} \pi^2 k^2 a_k^2 \leq 1,
$
thus showing that  $\KKK_{CBM}^2$ is isometric to $E_1$.
\end{case}

\begin{case}{$*=BB$.}
We can write any function $\psi\in L^2$ with vanishing integral in the form
$$
\psi(t)=\sum_{k=1}^{\infty} a_k \sqrt 2 \cos (k\pi t).
$$
These are the eigenfunctions of the Laplace operator with Neumann boundary conditions $f'(0)=f'(1)=0$. We have $\psi\in \KKK_{BB}^2$ if and only if
$
\sum_{k=1}^{\infty} \pi^2 k^2 a_k^2 \leq 1,
$
thus showing that  $\KKK_{BB}^2$ is isometric to $E_1$.
\end{case}

\begin{case}{$*=CBB$.} %In the case of $\KKK_{BB}^{2}$  one similarly considers expansions with respect to the orthonormal system $\sqrt 2 \sin (k\pi t)$, $k\in\N$. Namely, w
We can expand any function $\psi\in L^2$ with vanishing integral into a Fourier series
$$
\psi(t)=\sum_{k=1}^{\infty} (a_k \sin (2\pi k t) + b_k \cos (2\pi k t)).
$$
These are the eigenfunctions of the Laplace operator with periodic boundary condition $f(0)=f(1)$.  We have $\psi\in \KKK_{CBB}^2$ if and only if
$
\sum_{k=1}^{\infty} \pi^2 k^2 (a_k^2 + b_k^2) \leq \frac 14,
$
thus showing that  $\KKK_{CBB}^2$ is isometric to $\frac 12 E_2$.
\end{case}

In the case of  arbitrary $d\in\N$ one has to expand the components of the function $\psi$ separately.
\end{proof}

\begin{example}
Using Proposition~\ref{prop:sobolev_ellips_p_2} together with~\eqref{eq:mean_width_ellipsoid_S_t}, \eqref{eq:mean_width_ellipsoid_C_t}, we obtain
\begin{align*}
&V_1(\KKK_{BB}^{2,2}) = V_1(\KKK_{CBM}^{2,2}) = 2 V_1(\KKK_{CBB}^{2,1}) = 2\log 2,\\
&V_1(\KKK_{BB}^{2,4}) = V_1(\KKK_{CBM}^{2,4}) = 2 V_1(\KKK_{CBB}^{2,2}) = 2,\\
&V_1(\KKK_{BM}^{2,2}) = \frac{8G}{\pi},\\
&V_1(\KKK_{BM}^{2,4}) = \frac{28}{\pi^2}\zeta(3).
\end{align*}
\end{example}

\subsection{The first intrinsic volume of $\KKK_*^{\infty}$ and $\LLL_{*}^{\infty}$}\label{subsec:Lipschitz_V1}
Here we consider the case $p=\infty$. Recall that the sets $\KKK_*^{\infty}$ consist of functions which have Lipschitz constant at most $1$ and are subject to additional boundary conditions. In the set $\LLL_*^{\infty}$ the functions are additionally required to be monotone.  Applying Theorem~\ref{prop:mean_width_sobolev} with $q=1$ and using the notation $\sigma_*^2(t)=\Var X_*(t)$ we obtain the formula
$$
V_1(\KKK_*^{\infty}) = \sqrt {2\pi}\, \E \int_0^1 |X_*(t)| \dd t = \sqrt {2\pi}\, \E|N| \int_0^1 \sigma_*(t) \dd t  =  2\int_0^1 \sigma_*(t)\dd t.
$$
Here, $N$ has the standard normal distribution and we used the fact that $\E |N| =\sqrt{2/\pi}$. For the first intrinsic volume of $\LLL_*^{\infty}$ we obtain
$$
V_1(\LLL_*^{\infty}) = \sqrt {2\pi}\, \E \int_0^1 |X_*^+(t)| \dd t = \sqrt {2\pi}\, \E \max(N,0) \int_0^1 \sigma_*(t) \dd t  = \int_0^1 \sigma_*(t)\dd t.
$$
The variance  $\sigma_*^2(t)$ is given by
$$
\sigma_{BM}^2(t) = t, \;\;
\sigma_{BB}^2(t) =t(1-t), \;\;
\sigma_{CBM}^2(t)=t^2-t+\frac 13,\;\;
\sigma_{CBB}^2(t)=\frac 1 {12}.
$$
Evaluating the integral of $\sigma_*(t)$ we obtain  the formulae for  the  first intrinsic volume of $\KKK_{*}^{\infty}$ stated in Proposition~\ref{prop:V1_p_infty}.  Similarly, we obtain that  the first intrinsic volume of $\LLL_*^{\infty}$ is given by
\begin{align*}
V_1(\LLL_{BM}^{\infty}) = \frac 23,
\quad
V_1(\LLL_{BB}^{\infty}) = \frac {\pi}{8}.
\end{align*}

\subsection*{Acknowledgement} The authors are grateful to Wolfgang Arendt and Markus Kunze for a discussion related to Proposition~\ref{prop:no_isometry}.

%\subsection{Intrinsic volumes of the convex hull of the Wiener spiral}\label{subsec:BV_all_Vk_gao_vitale}

\bibliographystyle{plainnat}
\bibliography{sobolev_ellipsoids_bib}

\end{document}